\title[Quiver-theoretical approach to dynamical YB maps]
{Quiver-theoretical approach \\ to dynamical Yang-Baxter maps}
\author[D.~K.~Matsumoto]{Diogo Kendy Matsumoto}
\email{diogo-sw@shibaura-it.ac.jp}
\address{Center for Promotion of Educational Innovation \\
  Shibaura Institute of Technology \\
  307 Fukasaku, Minuma-ku, Saitama-shi, Saitama 337-8570, Japan.}
\author[K.~Shimizu]{Kenichi Shimizu}
\email{kshimizu@shibaura-it.ac.jp}
\address{Department of Mathematical Sciences \\
  Shibaura Institute of Technology \\
  307 Fukasaku, Minuma-ku, Saitama-shi, Saitama 337-8570, Japan.}
\thanks{The second author (K.S.) is supported by JSPS KAKENHI Grant Number JP16K17568}
\numberwithin{equation}{section}
\newtheorem{cnt}{}[section]
\theoremstyle{definition}
\newtheorem{definition}         [cnt]{Definition}
\theoremstyle{plain}
\newtheorem{lemma}              [cnt]{Lemma}
\newtheorem{theorem}            [cnt]{Theorem}
\newtheorem*{theorem*}          {Theorem}
\theoremstyle{remark}
\newtheorem{remark}             [cnt]{Remark}
\newtheorem{example}            [cnt]{Example}
\theoremstyle{definition}
\newtheorem*{note*}{Note}
\newcommand{\id}{\mathrm{id}}
\newcommand{\unitobj}{\mathbf{1}}
\newcommand{\rev}{\mathrm{rev}}
\newcommand{\Map}{\mathrm{Map}}
\newcommand{\triactl}{\mathop{\triangleright}}
\newcommand{\triactr}{\mathop{\triangleleft}}
\newcommand{\src}{\mathfrak{s}}
\newcommand{\tgt}{\mathfrak{t}}
\newcommand{\DSet}[1]{\mathsf{DSet}_{#1}}
\newcommand{\bimod}[1]{{#1}\mbox{\textsf{-Bim}}}
\newcommand{\dyact}{\triangleleft}
\newcommand{\dybml}[1]{\underset{#1}{\rightharpoonup}}
\newcommand{\dybmr}[1]{\underset{#1}{\leftharpoonup}}
\newcommand{\Quiv}[1]{\mathsf{Quiv}_{#1}}
\newcommand{\Br}{\mathsf{Br}}
\begin{document}

\begin{abstract}
  A dynamical Yang-Baxter map, introduced by Shibukawa, is a solution of the set-theoretical analogue of the dynamical Yang-Baxter equation. In this paper, we initiate a quiver-theoretical approach for the study of dynamical Yang-Baxter maps. Our key observation is that the category of dynamical sets over a set $\Lambda$, introduced by Shibukawa to establish a categorical framework to deal with dynamical Yang-Baxter maps, can be embedded into the category of quivers with vertices $\Lambda$. By using this embedding, we shed light on Shibukawa's classification result of a certain class of dynamical Yang-Baxter maps and extend his construction to obtain a new class of dynamical Yang-Baxter maps. We also discuss a relation between Shibukawa's bialgebroid associated to a dynamical Yang-Baxter map and Hayashi's weak bialgebra associated to a star-triangular face model.
\end{abstract}

\maketitle

\section{Introduction}

The Yang-Baxter equation was first considered independently by McGuire \cite{MR0161667} and Yang \cite{MR0261870} in their study of one-dimensional many body problems. Finding a (constant) solution of the equation is equivalent to solving the equation
\begin{equation}
  \label{eq:YBE}
  (\sigma \otimes \id_V) (\id_V \otimes \sigma) (\id_V \otimes \sigma)
  = (\id_V \otimes \sigma) (\sigma \otimes \id_V) (\id_V \otimes \sigma)
\end{equation}
for a linear operator $\sigma: V \otimes V \to V \otimes V$ on the two-fold tensor product of a vector space $V$. Thus, abusing terminology, we refer to \eqref{eq:YBE} as the Yang-Baxter equation in this paper. As a natural analogue of \eqref{eq:YBE}, Drinfeld \cite{MR1183474} proposed to investigate the set-theoretical Yang-Baxter equation
\begin{equation}
  \label{eq:set-th-YBE}
  (\sigma \times \id_X) (\id_X \times \sigma) (\id_X \times \sigma)
  = (\id_X \times \sigma) (\sigma \times \id_X) (\id_X \times \sigma)  
\end{equation}
for a map $\sigma: X \times X \to X \times X$ on the two-fold Cartesian product of a set $X$. Solutions of \eqref{eq:set-th-YBE}, called {\em Yang-Baxter maps}, have been studied as well as the solutions of the ordinary Yang-Baxter equation.

Gervais and Neveu \cite{MR747093} have introduced a generalization of the Yang-Baxter equation~\eqref{eq:set-th-YBE}, called the {\em dynamical Yang-Baxter equation}. Let $\mathfrak{h}$ be a commutative Lie algebra, and let $V$ be a diagonalizable $\mathfrak{h}$-module. The dynamical Yang-Baxter equation on $V$ is (equivalent to) the equation
\begin{equation}
  \label{eq:DYBE}
  \sigma(\lambda)_{12} \circ \sigma(\lambda - h^{(1)})_{23} \circ \sigma(\lambda)_{12}
  = \sigma(\lambda - h^{(1)})_{23} \circ \sigma(\lambda)_{12} \circ \sigma(\lambda - h^{(1)})_{23}
\end{equation}
for a family $\{ \sigma(\lambda) \in \mathrm{End}_{\mathfrak{h}}(V \otimes V) \}_{\lambda \in \mathfrak{h}^*}$ of $\mathfrak{h}$-equivariant linear operators. Here, $\sigma(\lambda)_{12}$ and $\sigma(\lambda - h^{(1)})_{23}$ are linear operators on $V \otimes V \otimes V$ defined by
\begin{align*}
  \sigma(\lambda)_{12}(v_1 \otimes v_2 \otimes v_3)
  & = \sigma(\lambda)(v_1 \otimes v_2) \otimes v_3, \\
  \sigma(\lambda - h^{(1)})_{23}(v_1 \otimes v_2 \otimes v_3)
  & = v_1 \otimes \sigma(\lambda - \mathrm{wt}(v_1))(v_2 \otimes v_3)
\end{align*}
for $\lambda \in \mathfrak{h}$ and weight vectors $v_1, v_2, v_3 \in V$. Felder \cite{MR1404026} studied mathematical aspects of this equation. Motivated by its relation to conformal field theory and statistical mechanics, the dynamical Yang-Baxter equation also have been studied extensively; see, {\it e.g.}, the lecture book of Etingof-Latour \cite{MR2142557} and references therein.

It is interesting to study a set-theoretical analogue --- like \eqref{eq:set-th-YBE} --- of the dynamical Yang-Baxter equation~\eqref{eq:DYBE}. Shibukawa gave a mathematical formulation of such an equation: Let $\Lambda$ be a non-empty set, and let $X$ be a set equipped with a map $\Lambda \times X \to \Lambda$ expressed as $(\lambda, x) \mapsto \lambda \dyact x$. The {\em set-theoretical dynamical Yang-Baxter equation} \cite{MR2181454,MR2389797,MR2742743} is the equation
\begin{equation}
  \label{eq:set-th-DYBE}
  \sigma(\lambda)_{12} \circ \sigma(\lambda \dyact X^{(1)})_{23} \circ \sigma(\lambda)_{12}
  = \sigma(\lambda \dyact X^{(1)})_{23} \circ \sigma(\lambda)_{12} \circ \sigma(\lambda \dyact X^{(1)})_{23}
\end{equation}
for a family $\{ \sigma(\lambda): X \times X \to X \times X \}_{\lambda \in \Lambda}$ of maps parametrized by $\Lambda$. Here, $\sigma(\lambda)_{12}$ and $\sigma(\lambda \dyact X^{(1)})_{23}$ are maps on $X \times X \times X$ defined by
\begin{align*}
  \sigma(\lambda)_{12}(x, y, z)
  & = (\sigma(\lambda)(x, y), z), \\
  \sigma(\lambda \dyact X^{(1)})_{23}(x, y, z)
  & = (x, \sigma(\lambda \dyact x)(y, z))
\end{align*}
for $\lambda \in \Lambda$ and $x, y, z \in X$. We usually require $\sigma(\lambda)$ to satisfy the condition
\begin{equation}
  \label{eq:set-th-DYBE-inv-cond}
  (\lambda \dyact (x \dybml{\lambda} y)) \dyact (x \dybmr{\lambda} y)
  = (\lambda \dyact x) \dyact y
  \quad \text{for all $x, y \in X$ and $\lambda \in \Lambda$},
\end{equation}
where the symbols $x \dybml{\lambda} y$ and $x \dybmr{\lambda} y$ are defined by
\begin{equation}
  \label{eq:set-th-DYBE-notation}
  \sigma(\lambda)(x, y) = (x \dybml{\lambda} y, x \dybmr{\lambda} y)
\end{equation}
for $\lambda \in \Lambda$ and $x, y \in X$. The condition~\eqref{eq:set-th-DYBE-inv-cond} corresponds to the $\mathfrak{h}$-equivariance property of $\sigma(\lambda)$ in \eqref{eq:DYBE} and thus it is called the {\em invariance condition} \cite{MR2389797} or the {\em weight zero condition} \cite{MR3528563}.

\begin{definition}[Shibukawa \cite{MR2181454,MR2389797}]
  \label{def:DYB-map}
  A solution $\{ \sigma(\lambda): X \times X \to X \times X \}_{\lambda \in \Lambda}$ of the set-theoretical dynamical Yang-Baxter equation \eqref{eq:set-th-DYBE} satisfying the invariance condition \eqref{eq:set-th-DYBE-inv-cond} is called a {\em dynamical Yang-Baxter map} on $X$.
\end{definition}

Unlike the original dynamical Yang-Baxter equation, it is not known whether dynamical Yang-Baxter maps relate to physical models. At the moment, dynamical Yang-Baxter maps are studied purely from the viewpoint of algebra and combinatorics \cite{MR3448180,MR3374623,MR2846728,MR2742743,MR2588133,MR2389797,MR2181454,MR2969244,MR3528563}.

In this paper, we give a new and systematic method to study dynamical Yang-Baxter maps. Shibukawa \cite{MR2742743} has introduced the monoidal category $\DSet{\Lambda}$ as a useful framework to deal with dynamical Yang-Baxter maps. As Shibukawa pointed out in \cite{MR2742743}, a dynamical Yang-Baxter map is just a braided object of $\DSet{\Lambda}$. Our key observation is that the category $\DSet{\Lambda}$ can be fully embedded into the monoidal category $\Quiv{\Lambda}$ of quivers with vertices $\Lambda$ (Theorem~\ref{thm:dyn-set-embedding}). Thus, through the embedding, a dynamical Yang-Baxter map gives rise to a braided quiver studied by Andruskiewitsch \cite{MR2183847}. Following this scheme, one can utilize quiver-theoretical methods to the study of dynamical Yang-Baxter maps.

\subsection*{Organization of this paper}

We describe the organization of this paper. In Section~\ref{sec:dyn-sets-quiv}, we first introduce the monoidal categories $\DSet{\Lambda}$ and $\Quiv{\Lambda}$ for a non-empty set $\Lambda$. We then construct a fully faithful strong monoidal functor $\mathsf{Q}: \DSet{\Lambda} \to \Quiv{\Lambda}$ (Theorem~\ref{thm:dyn-set-embedding}). This functor turns a dynamical Yang-Baxter map into a braided quiver, {\it i.e.}, a braided object of $\Quiv{\Lambda}$ (Theorem~\ref{thm:BrQ}).

In Section~\ref{sec:sol-lqg}, we reexamine Shibukawa's classification result (cited as Theorem~\ref{thm:dyn-set-PH-DYBE} of this paper) on the dynamical Yang-Baxter maps on a dynamical set of a certain form, which we call a dynamical set of {\em PH type} (Definition~\ref{def:PH-type}). We give a different proof of the classification result from the viewpoint of our quiver-theoretical approach. We also classify the dynamical Yang-Baxter maps on a dynamical set of PH type up to equivalence (Theorem~\ref{thm:DYB-map-PH-equiv}).

In Section~\ref{sec:new-sol}, we consider a class of dynamical sets larger than the class of dynamical sets of PH type. We do not give a classification of the dynamical Yang-Baxter maps on such a dynamical set, but invent a new class of Yang-Baxter maps again by our quiver-theoretical approach.

In Section~\ref{sec:two-weak-bialg}, we concern two constructions of a weak bialgebra from a dynamical Yang-Baxter map $(X, \sigma)$ satisfying a certain condition. On the one hand, Shibukawa \cite{MR3448180} constructed a weak bialgebra $\mathfrak{B}(\sigma)$ from such a dynamical Yang-Baxter map $(X, \sigma)$. On the other hand, we obtain another weak bialgebra $\mathfrak{A}(w_{\sigma})$ by applying Hayashi's construction \cite{MR1623965} to the linearization of the braided quiver associated to $(X, \sigma)$. We give a natural weak bialgebra map $\phi: \mathfrak{A}(w_{\sigma}) \to \mathfrak{B}(\sigma)$ which is not an isomorphism in general (Theorem \ref{thm:two-weak-bialg} and Remark~\ref{rem:two-weak-bialg}).

\subsection*{Acknowledgment}

We are grateful to Takahiro Hayashi and Youichi Shibukawa for helpful discussion. The second author (K.S.) is supported by JSPS KAKENHI Grant Number JP16K17568.

\section{Dynamical sets and quivers}
\label{sec:dyn-sets-quiv}

\subsection{Monoidal categories and functors}

We refer the reader to Mac Lane \cite{MR1712872} for the basic notions in the category theory. A {\em monoidal category} \cite[VII.1]{MR1712872} is a category $\mathcal{C}$ endowed with a functor $\otimes: \mathcal{C} \times \mathcal{C} \to \mathcal{C}$ (called the tensor product), an object $\unitobj \in \mathcal{C}$ (called the unit object), and natural isomorphisms
\begin{equation}
  \label{eq:mon-cat-nat-iso}
  (X \otimes Y) \otimes Z \cong X \otimes (Y \otimes Z)
  \quad \text{and} \quad
  \unitobj \otimes X \cong X \cong X \otimes \unitobj
  \quad (X, Y, Z \in \mathcal{C})
\end{equation}
obeying the pentagon and the triangle axioms. A monoidal category is said to be {\em strict} if the natural isomorphisms~\eqref{eq:mon-cat-nat-iso} are the identities. In view of the Mac Lane coherence theorem, we may assume that all monoidal categories are strict when we discuss the general theory of monoidal categories.

Let $\mathcal{C}$ and $\mathcal{D}$ be (strict) monoidal categories. A {\em monoidal functor} \cite[XI.2]{MR1712872} from $\mathcal{C}$ to $\mathcal{D}$ is a functor $F: \mathcal{C} \to \mathcal{D}$ endowed with a natural transformation
\begin{equation*}
  F^{(2)}_{X,Y}: F(X) \otimes F(Y) \to F(X \otimes Y)
  \quad (X, Y \in \mathcal{C})
\end{equation*}
and a morphism $F^{(0)}: \unitobj \to F(\unitobj)$ satisfying the equations
\begin{gather}
  \label{eq:mon-fun-1}
  F^{(2)}_{X \otimes Y, Z} \circ (F^{(2)}_{X, Y} \otimes \id_{F(Z)})
  = F^{(2)}_{X, Y \otimes Z} \circ (\id_{F(X)} \otimes F^{(2)}_{Y,Z}), \\
  \label{eq:mon-fun-2}
  F^{(2)}_{X, \unitobj} \circ (\id_{F(X)} \otimes F^{(0)})
  = \id_{F(X)}
  = F^{(2)}_{\unitobj, X} \circ (F^{(0)} \otimes \id_{F(X)})
\end{gather}
for all objects $X, Y, Z \in \mathcal{C}$. A monoidal functor $F$ is said to be {\em strong} if $F^{(2)}$ and $F^{(0)}$ are invertible, and {\em strict} if they are the identities.

\subsection{Shibukawa's category of dynamical sets}

Let $\Lambda$ be a non-empty set. To establish a category-theoretical framework dealing with the set-theoretical dynamical Yang-Baxter equation, Shibukawa \cite{MR2742743} introduced the category $\DSet{\Lambda}$. An object of this category is a set $X$ equipped with a map
\begin{equation*}
  \dyact_X: \Lambda \times X \to X, \quad (\lambda, x) \mapsto \lambda \dyact_X x.
\end{equation*}
We will write $\dyact = \dyact_X$ if no confusion arises. Given two objects $X$ and $Y$ of $\DSet{\Lambda}$, a morphism $f: X \to Y$ in $\DSet{\Lambda}$ is a map $f: \Lambda \times X \to Y$ satisfying
\begin{equation}
  \label{eq:dyn-set-morph-def}
  \lambda \dyact f(\lambda, x) = \lambda \dyact x
\end{equation}
for all $\lambda \in \Lambda$, $x \in X$ and $y \in Y$. The composition $g \diamond f$ of morphisms $f: X \to Y$ and $g: Y \to Z$ in $\DSet{\Lambda}$ is defined by
\begin{equation}
  \label{eq:dyn-set-compo}
  (g \diamond f)(\lambda, x) = g(\lambda, f(\lambda, x))
\end{equation}
for $x \in X$ and $\lambda \in \Lambda$. We may regard a morphism $f: X \to Y$ in $\DSet{\Lambda}$ as a family $\{ f(\lambda) \}_{\lambda \in \Lambda}$ of maps from $X$ to $Y$ by $f(\lambda)(x) = f(\lambda, x)$. Then the composition of morphisms in $\DSet{\Lambda}$ is expressed also as
\begin{equation*}
  (g \diamond f)(\lambda) = g(\lambda) \circ f(\lambda) \quad (\lambda \in \Lambda),
\end{equation*}
where $\circ$ is the usual composition of maps. It is easy to see that the second projection $\Lambda \times X \to X$ is the identity morphism on the object $X$. Following Rump's terminology \cite{MR3528563},

\begin{definition}
  We call $\DSet{\Lambda}$ the category of {\em dynamical sets} over $\Lambda$.
\end{definition}

One should be careful when working in $\DSet{\Lambda}$ since a morphism in this category is not a map between the underlying sets. To be sure, we clarify what an isomorphism in this category is:

\begin{lemma}
  \label{lem:dyn-set-isom}
  A morphism $f: X \to Y$ in $\DSet{\Lambda}$ is an isomorphism if and only if the map $f(\lambda): X \to Y$ is bijective for all $\lambda \in \Lambda$.
\end{lemma}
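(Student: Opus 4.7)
The plan is to unpack the definition of isomorphism in $\DSet{\Lambda}$ using the pointwise formula for composition. Because the identity morphism on $X$ corresponds to the family $\{\id_X\}_{\lambda \in \Lambda}$ and composition is given by $(g \diamond f)(\lambda) = g(\lambda) \circ f(\lambda)$, a two-sided inverse of $f$ in $\DSet{\Lambda}$ is precisely a family $\{g(\lambda)\}_{\lambda \in \Lambda}$ of set-theoretic inverses of the $f(\lambda)$. So the ``only if'' direction is immediate: if $g: Y \to X$ satisfies $g \diamond f = \id_X$ and $f \diamond g = \id_Y$, then for each $\lambda \in \Lambda$ the map $g(\lambda)$ is a two-sided inverse of $f(\lambda)$, forcing $f(\lambda)$ to be bijective.

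For the ``if'' direction, I would define, for each $\lambda \in \Lambda$, the map $g(\lambda): Y \to X$ to be the set-theoretic inverse of $f(\lambda)$. The only nontrivial point is to check that the family $\{g(\lambda)\}_{\lambda \in \Lambda}$ actually defines a morphism of $\DSet{\Lambda}$, i.e., satisfies the compatibility condition~\eqref{eq:dyn-set-morph-def}: $\lambda \dyact_X g(\lambda, y) = \lambda \dyact_Y y$ for all $\lambda \in \Lambda$ and $y \in Y$. Given $y \in Y$, set $x = g(\lambda)(y)$, so that $f(\lambda)(x) = y$. Applying the compatibility condition for $f$ at $(\lambda, x)$ yields $\lambda \dyact_Y y = \lambda \dyact_Y f(\lambda, x) = \lambda \dyact_X x = \lambda \dyact_X g(\lambda, y)$, as required. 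Once $g$ is known to be a morphism, the identities $g \diamond f = \id_X$ and $f \diamond g = \id_Y$ hold by construction on each fiber over $\lambda$.

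There is really no obstacle here: the argument is a direct translation between the pointwise nature of composition in $\DSet{\Lambda}$ and the set-theoretic notion of bijection. The only subtlety worth flagging is that a morphism in $\DSet{\Lambda}$ is not itself a single map between underlying sets, so one must be careful to verify that the candidate inverse $g$ respects the compatibility~\eqref{eq:dyn-set-morph-def}; this is exactly the step carried out above.
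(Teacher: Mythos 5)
Your proposal is correct and follows essentially the same route as the paper's proof: the ``only if'' direction reads off pointwise inverses from the formula $(g \diamond f)(\lambda) = g(\lambda) \circ f(\lambda)$, and the ``if'' direction defines $g(\lambda) = f(\lambda)^{-1}$ and verifies the compatibility condition~\eqref{eq:dyn-set-morph-def} by the same substitution $\lambda \dyact g(\lambda, y) = \lambda \dyact f(\lambda, g(\lambda, y)) = \lambda \dyact y$. No gaps.
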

\begin{proof}
  If $f$ is an isomorphism, then there is a morphism $g: Y \to X$ in $\DSet{\Lambda}$ such that $f \diamond g$ and $g \diamond f$ are the identity morphisms. By the definition \eqref{eq:dyn-set-compo} of the composition in $\DSet{\Lambda}$, the map $g(\lambda)$ must be an inverse of $f(\lambda)$. Thus, in particular, the map $f(\lambda)$ is a bijection for all $\lambda \in \Lambda$.

  Suppose, conversely, that the map $f(\lambda): X \to Y$ is a bijection for all $\lambda \in \Lambda$. We define $g(\lambda): Y \to X$ to be the inverse of $f(\lambda)$. Then $g = \{ g(\lambda) \}$ is a morphism from $Y$ to $X$ in $\DSet{\Lambda}$. Indeed, since $f$ is a morphism in $\DSet{\Lambda}$, we have
  \begin{equation*}
    \lambda \dyact g(\lambda, y)
    = \lambda \dyact f(\lambda, g(\lambda, y))
    = \lambda \dyact (f(\lambda)g(\lambda)(y))
    = \lambda \dyact y
  \end{equation*}
  for all $\lambda \in \Lambda$ and $y \in Y$. It is easy to check that $f$ is an isomorphism in $\DSet{\Lambda}$ with inverse $g$.
\end{proof}

We define the object $K_{\Lambda} \in \DSet{\Lambda}$ as follows: As a set, $K_{\Lambda} = \Lambda$. The structure map $\dyact_{K_{\Lambda}}$ for $K_{\Lambda}$ is defined by $\lambda \dyact_{K_{\Lambda}} k = k$ for $\lambda \in \Lambda$ and $k \in K$. This object has the following universal property:

\begin{lemma}
  \label{lem:dyn-set-terminal}
  $K_{\Lambda} \in \DSet{\Lambda}$ is a terminal object.
\end{lemma}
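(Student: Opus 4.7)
The plan is to unpack the definition of a morphism in $\DSet{\Lambda}$ and show that the morphism condition, combined with the specific structure map of $K_\Lambda$, forces any such morphism to be given by the formula $f(\lambda, x) = \lambda \dyact_X x$. This will give both existence and uniqueness.

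First, I would fix an arbitrary object $X \in \DSet{\Lambda}$ and consider any morphism $f: X \to K_\Lambda$ in $\DSet{\Lambda}$, which by definition is a map $f: \Lambda \times X \to K_\Lambda$ satisfying
\begin{equation*}
  \lambda \dyact_{K_\Lambda} f(\lambda, x) = \lambda \dyact_X x
  \quad (\lambda \in \Lambda,\ x \in X).
\end{equation*}
Since the structure map $\dyact_{K_\Lambda}$ is the second projection by construction, the left-hand side simplifies to $f(\lambda, x)$, and the morphism condition becomes the equality $f(\lambda, x) = \lambda \dyact_X x$. This uniquely pins down $f$ as a map, proving uniqueness.

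For existence, I would then define $f: \Lambda \times X \to \Lambda$ by the same formula $f(\lambda, x) := \lambda \dyact_X x$ and verify that this $f$ is indeed a morphism in $\DSet{\Lambda}$: the required identity $\lambda \dyact_{K_\Lambda} f(\lambda, x) = \lambda \dyact_X x$ holds tautologically since $\dyact_{K_\Lambda}$ is the projection.

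There is essentially no obstacle here — the result is a direct consequence of unfolding the definitions. The only point that deserves care is the asymmetric nature of morphisms in $\DSet{\Lambda}$ (a morphism $X \to Y$ is not a bare map $X \to Y$ but a family parametrized by $\Lambda$), which is precisely what makes $K_\Lambda$, rather than a singleton, play the role of the terminal object.
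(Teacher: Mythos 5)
Your proof is correct and follows essentially the same route as the paper: both identify the candidate morphism $\psi_X(\lambda,x)=\lambda\dyact_X x$, use the fact that $\dyact_{K_\Lambda}$ acts as the second projection to force any morphism $f\colon X\to K_\Lambda$ to equal $\psi_X$, and note that $\psi_X$ tautologically satisfies the morphism condition. Nothing is missing.
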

\begin{proof}
  Let $X \in \DSet{\Lambda}$ be an object. There is a morphism
  \begin{equation}
    \label{eq:dyn-set-terminal-map}
    \phi_X: X \to K_{\Lambda},
    \quad \psi_X(\lambda, x) = \lambda \dyact_X x
    \quad (\lambda \in \Lambda, x \in X).
  \end{equation}
  If $f: X \to K_{\Lambda}$ be a morphism in $\DSet{\Lambda}$, then, by~\eqref{eq:dyn-set-morph-def}, we have
  \begin{equation*}
    f(\lambda, x) = \lambda \dyact_{K_{\Lambda}} f(\lambda, x) = \lambda \dyact_X x = \psi_X(\lambda, x)
  \end{equation*}
  for all $\lambda \in \Lambda$ and $x \in X$. Thus the set $\DSet{\Lambda}(X, K_{\Lambda})$ of morphisms from $X$ to $K_{\Lambda}$ consists of one element $\psi_X$ defined in the above.
\end{proof}

Given two objects $X, Y \in \DSet{\Lambda}$, we define their tensor product $X \otimes Y \in \DSet{\Lambda}$ as the Cartesian product $X \times Y$ endowed with the structure map given by
\begin{equation}
  \label{eq:dyn-set-tensor-obj}
  \lambda \dyact_{X \otimes Y} (x, y) = (\lambda \dyact x) \dyact y
\end{equation}
for $\lambda \in \Lambda$, $x \in X$ and $y \in Y$. For morphisms $f: X \to X'$ and $g: Y \to Y'$ in $\DSet{\Lambda}$, their tensor product $f \otimes g: X \otimes Y \to X' \otimes Y'$ is given by
\begin{equation}
  \label{eq:dyn-set-tensor-morph}
  (f \otimes g)(\lambda, (x, y)) = (f(\lambda, x), g(\lambda \dyact x, y))
\end{equation}
for $\lambda \in \Lambda$, $x \in X$ and $y \in Y$. This definition can be expressed also as
\begin{equation}
  \label{eq:dyn-set-tensor-morph-2}
  (f \otimes g)(\lambda) = f(\lambda) \times g(\lambda \dyact X^{(1)})
\end{equation}
if we use the notation similar to that used in the set-theoretical dynamical Yang-Baxter equation \eqref{eq:set-th-DYBE}. The singleton $\unitobj := \{ 1 \}$ is an object of $\DSet{\Lambda}$ with the structure map defined by $\lambda \dyact 1 = \lambda$ for $\lambda \in \Lambda$. There are natural isomorphisms
\begin{gather*}
  a_{X,Y,Z}: (X \otimes Y) \otimes Z \to X \otimes (Y \otimes Z),
  \quad \ell_X: \mathbf{1} \otimes X \to X,
  \quad r_X: X \otimes \mathbf{1} \to X
\end{gather*}
for objects $X, Y, Z \in \DSet{\Lambda}$ defined by
\begin{equation*}
  a_{X,Y,Z}(\lambda)((x, y), z) = (x, (y, z)),
  \quad \ell_{X}(\lambda)(1, x) = x,
  \quad r_{X}(\lambda)(x, x) = x,
\end{equation*}
respectively, for $\lambda \in \Lambda$, $x \in X$, $y \in Y$ and $z \in Z$. The category $\DSet{\Lambda}$ is a monoidal category with the tensor product $\otimes$, the unit object $\unitobj$, and the natural isomorphisms defined in the above.

\begin{remark}
  \label{rem:dyn-set-tensor-2}
  The category $\DSet{\Lambda}$ has an alternative tensor product $\mathbin{\check{\otimes}}$ defined as follows: For objects $X, Y \in \DSet{\Lambda}$, we define $X \mathbin{\check{\otimes}} Y \in \DSet{\Lambda}$ to be the Cartesian product $X \times Y \in \DSet{\Lambda}$ endowed with the structure morphism defined by
  \begin{equation*}
    \lambda \dyact (x, y) = (\lambda \dyact y) \dyact x
    \quad (\lambda \in \Lambda, x \in X, y \in Y).
  \end{equation*}
  For morphisms $f: X \to X'$ and $g: Y \to Y'$ in $\DSet{\Lambda}$, we set
  \begin{equation*}
    (f \mathbin{\check{\otimes}} g)(\lambda, x, y) = f(\lambda \dyact y, x) \times g(\lambda, y)
    \quad (\lambda \in \Lambda, x \in X, y \in Y).
  \end{equation*}
  There is a natural isomorphism
  \begin{equation*}
    \tau_{X,Y}: X \otimes Y \to Y \mathbin{\check{\otimes}} X,
    \quad (x, y) \mapsto (y, x)
    \quad (X, Y \in \DSet{\Lambda}).
  \end{equation*}
  The identity functor on $\DSet{\Lambda}$ gives an isomorphism
  \begin{equation*}
    (\id_{\DSet{\Lambda}}, \tau, \id_{\mathbf{1}}):
    (\DSet{\Lambda}, \otimes^{\rev}, \mathbf{1})
    \xrightarrow{\quad \approx \quad} (\DSet{\Lambda}, \mathbin{\check{\otimes}}, \mathbf{1})
  \end{equation*}
  of monoidal categories, where $X \otimes^{\rev} Y = Y \otimes X$. The two tensor products $\otimes$ and $\mathbin{\check{\otimes}}$ are equivalent in this sense. We use the former one unless otherwise stated.
\end{remark}

\subsection{Category of quivers}
\label{subsec:cat-quivers}

Let $\Lambda$ be a non-empty set. A {\em quiver} over $\Lambda$ is a set $Q$ endowed with two maps $\src_Q, \tgt_Q: Q \to \Lambda$, called the {\em source map} and the {\em target map}, respectively. We write $\src = \src_Q$ and $\tgt = \tgt_Q$ if no confusion arises. An element $a \in Q$ of a quiver $Q$ is called an {\em arrow} from $\src(a)$ to $\tgt(a)$. The diagram
\begin{equation*}
  \xymatrix{
    \overset{\displaystyle \lambda}{\circ}
    \ar[rr]^{\displaystyle a} & & \overset{\displaystyle \mu}{\circ}
  }
\end{equation*}
will be used to mean that $a$ is an arrow from $\lambda$ to $\mu$.

The category $\Quiv{\Lambda}$ of quivers over $\Lambda$ is defined as follows: As its name suggests, an object of this category is a quiver over $\Lambda$. If $Q$ and $Q'$ are objects of $\Quiv{\Lambda}$, then a morphism $f: Q \to Q'$ in $\Quiv{\Lambda}$ is a map $f: Q \to Q'$ such that
\begin{equation*}
  \src(f(a)) = \src(a)
  \quad \text{and} \quad
  \tgt(f(a)) = \tgt(a)
\end{equation*}
for all elements $a \in Q$. The composition of morphisms in $\Quiv{\Lambda}$ is defined by the ordinary composition of maps. The following lemma is easy to prove:

\begin{lemma}
  \label{lem:quiv-iso}
  A morphism in $\Quiv{\Lambda}$ is an isomorphism in $\Quiv{\Lambda}$ if and only if it is a bijection between the underlying sets.
\end{lemma}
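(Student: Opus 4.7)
The plan is to exploit the fact that composition in $\Quiv{\Lambda}$ is literally the composition of underlying set-maps, so the question of invertibility in $\Quiv{\Lambda}$ reduces to checking that the set-theoretic inverse of a bijective morphism still preserves source and target.

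For the forward direction, I would suppose $f: Q \to Q'$ is an isomorphism in $\Quiv{\Lambda}$ with inverse $g: Q' \to Q$. Since the composition in $\Quiv{\Lambda}$ is by definition the ordinary composition of maps, the identities $g \circ f = \id_Q$ and $f \circ g = \id_{Q'}$ hold as set-maps, so $f$ is a bijection with set-theoretic inverse $g$.

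For the reverse direction, assume $f: Q \to Q'$ is a morphism in $\Quiv{\Lambda}$ which is a bijection on underlying sets. Let $g: Q' \to Q$ denote its set-theoretic inverse. The key step is to verify that $g$ is itself a morphism in $\Quiv{\Lambda}$, namely that $\src_Q(g(b)) = \src_{Q'}(b)$ and $\tgt_Q(g(b)) = \tgt_{Q'}(b)$ for all $b \in Q'$. For this I would write $b = f(a)$ for the unique $a = g(b) \in Q$ and use the fact that $f$ preserves source and target: $\src_{Q'}(b) = \src_{Q'}(f(a)) = \src_Q(a) = \src_Q(g(b))$, and similarly for the target map. Once $g$ is known to be a morphism in $\Quiv{\Lambda}$, the relations $g \circ f = \id_Q$ and $f \circ g = \id_{Q'}$ automatically hold in $\Quiv{\Lambda}$, so $f$ is an isomorphism.

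There is no real obstacle here; the only thing to notice is the contrast with Lemma~\ref{lem:dyn-set-isom}, where the underlying-set interpretation is subtle because morphisms in $\DSet{\Lambda}$ are not ordinary maps between the underlying sets. In $\Quiv{\Lambda}$ a morphism \emph{is} an ordinary map, so the proof is essentially a one-line verification that the inverse bijection respects the structural maps.
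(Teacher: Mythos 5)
Your proof is correct and is exactly the standard argument; the paper in fact omits the proof entirely (stating only that the lemma is ``easy to prove''), and your verification that the set-theoretic inverse of a bijective morphism again preserves the source and target maps is the expected one-line justification.
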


Given two objects $Q$ and $R$ of $\Quiv{\Lambda}$, we define their fiber product $Q \times_{\Lambda} R$ by
\begin{equation*}
  Q \times_{\Lambda} R = \{ (a, b) \in Q \times R \mid \tgt(a) = \src(b) \}.
\end{equation*}
The set $Q \times_{\Lambda} R$ is a quiver over $\Lambda$ by
\begin{equation*}
  \src(a, b) = \src(a)
  \quad \text{and} \quad
  \tgt(a, b) = \tgt(b)
  \quad ((a, b) \in Q \times_{\Lambda} R).
\end{equation*}
Given two morphisms $f: Q \to Q'$ and $g: R \to R'$ in $\Quiv{\Lambda}$, we define the map
\begin{equation*}
  f \times_{\Lambda} g: Q \times_{\Lambda} R \to Q' \times_{\Lambda} R'
\end{equation*}
to be the restriction of $f \times g$ to $Q \times_{\Lambda} R$. The fiber product $\times_{\Lambda}$ makes $\Quiv{\Lambda}$ a monoidal category. We note that the unit object of $\Quiv{\Lambda}$ is the set $\Lambda$ with the source and the target map given by $\src = \id_{\Lambda} = \tgt$.

\begin{remark}
  $\Quiv{\Lambda}$ has an alternative tensor product $\mathbin{\check{\times}}_{\Lambda}$ given by
  \begin{equation*}
    Q \mathbin{\check{\times}}_{\Lambda} R = \{ (a, b) \in Q \times R \mid \src(a) = \tgt(b) \}.
  \end{equation*}
  Given $Q \in \Quiv{\Lambda}$, we define the {\em opposite quiver} $\overline{Q}$ of $Q$ to be the quiver obtained by `reversing' all arrows of $Q$. Formally, $\overline{Q}$ is same as $Q$ as a set. The source and the target maps of $\overline{Q}$ are defined by $\src_{\overline{Q}}(\overline{a}) = \tgt_Q(a)$ and $\tgt_{\overline{Q}}(\overline{a}) = \src_Q(a)$ for $a \in Q$, respectively, where an element $a \in Q$ is written as $\overline{a}$ when it is regarded as an element of $\overline{Q}$. The assignment $Q \mapsto \overline{Q}$ gives rise to an isomorphism
  \begin{equation*}
    (\overline{\phantom{m}}, \id, \id_{\Lambda}):
    (\Quiv{\Lambda}, \times_{\Lambda}, \Lambda)
    \xrightarrow{\quad \approx \quad}
    (\Quiv{\Lambda}, \mathbin{\check{\times}}_{\Lambda}, \Lambda)
  \end{equation*}
  of monoidal categories. The two tensor products $\times_{\Lambda}$ and $\mathbin{\check{\times}}_{\Lambda}$ are equivalent in this sense. We always consider the former one in this paper.
\end{remark}

\subsection{Embedding $\DSet{\Lambda}$ into $\Quiv{\Lambda}$}

Let $\Lambda$ be a non-empty set. Given an object $X \in \DSet{\Lambda}$, we define the quiver $\mathsf{Q}(X)$ over $\Lambda$ as follows: As a set,
\begin{equation*}
  \mathsf{Q}(X) = \Lambda \times X.
\end{equation*}
The source and the target map of this quiver are defined by
\begin{equation*}
  \src_{\mathsf{Q}(X)}(\lambda, x) = \lambda
  \quad \text{and} \quad
  \tgt_{\,\mathsf{Q}(X)}(\lambda, x) = \lambda \dyact_X x,
\end{equation*}
respectively, for $\lambda \in \Lambda$ and $x \in X$. Recall that a morphism $f: X \to Y$ in $\DSet{\Lambda}$ is a map $f: \Lambda \times X \to Y$ satisfying~\eqref{eq:dyn-set-morph-def}. For such an $f$, we define
\begin{equation*}
  \mathsf{Q}(f): \mathsf{Q}(X) \to \mathsf{Q}(Y),
  \quad \mathsf{Q}(f)(\lambda, x) = (\lambda, f(\lambda, x))
  \quad (\lambda \in \Lambda, x \in X).
\end{equation*}
We thus obtain a functor $\mathsf{Q}: \DSet{\Lambda} \to \Quiv{\Lambda}$. Our main result in this section states that the functor $\mathsf{Q}$ embeds the monoidal category $\DSet{\Lambda}$ into $\Quiv{\Lambda}$. Namely, we prove:

\begin{theorem}
  \label{thm:dyn-set-embedding}
  The functor $\mathsf{Q}: \DSet{\Lambda} \to \Quiv{\Lambda}$ is a fully faithful strong monoidal functor together with the monoidal structure given by
  \begin{gather}
    \label{eq:Q-mon-str-2}
    \mathsf{Q}^{(2)}_{X,Y}: \mathsf{Q}(X) \times_{\Lambda} \mathsf{Q}(Y) \to \mathsf{Q}(X \otimes Y),
    \quad (\lambda, x, \mu, y) \mapsto (\lambda, x, y), \\
    \label{eq:Q-mon-str-0}
    \mathsf{Q}^{(0)}: \mathsf{Q}(\unitobj) \to \Lambda,
    \quad (\lambda, 1) \mapsto \lambda.
  \end{gather}
  An object $Q \in \Quiv{\Lambda}$ is isomorphic to $\mathsf{Q}(X)$ for some object $X \in \DSet{\Lambda}$ if and only if the following condition is satisfied:
  \begin{equation}
    \label{eq:Q-ess-img-cond}
    \text{The cardinality of the set $\src_Q^{-1}(\lambda)$ does not depend on $\lambda \in \Lambda$}.
  \end{equation}
\end{theorem}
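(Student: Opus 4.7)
The plan is to verify the assertions in four stages, unpacking each condition into an equality that can be checked on elements; almost every step reduces to a direct calculation once the definitions are aligned.

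First I would confirm that $\mathsf{Q}$ is well-defined on morphisms and functorial. For a morphism $f : X \to Y$ in $\DSet{\Lambda}$, the first component of $\mathsf{Q}(f)(\lambda, x)$ is $\lambda$ by construction, so $\src$ is preserved; the target-preservation equation
\[
  \tgt_{\,\mathsf{Q}(Y)}(\mathsf{Q}(f)(\lambda, x)) = \lambda \dyact_Y f(\lambda, x) = \lambda \dyact_X x
\]
is precisely the defining identity \eqref{eq:dyn-set-morph-def} of a morphism in $\DSet{\Lambda}$. Functoriality follows at once from \eqref{eq:dyn-set-compo}.

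Second, to prove full faithfulness, I would observe that any morphism $g : \mathsf{Q}(X) \to \mathsf{Q}(Y)$ in $\Quiv{\Lambda}$ satisfies $\src(g(\lambda, x)) = \lambda$, so its first component is forced to be the identity and we can write $g(\lambda, x) = (\lambda, f(\lambda, x))$ for a unique map $f : \Lambda \times X \to Y$. Target preservation for $g$ is then equivalent to the defining condition \eqref{eq:dyn-set-morph-def} for $f$, so $f$ lives in $\DSet{\Lambda}(X, Y)$ and $\mathsf{Q}(f) = g$; uniqueness of $f$ is evident.

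Third, for the monoidal structure, I would check that $\mathsf{Q}^{(2)}_{X,Y}$ is a bijection with inverse $(\lambda, (x,y)) \mapsto ((\lambda, x), (\lambda \dyact x, y))$ — note that the right-hand side lies in the fiber product precisely because $\tgt(\lambda, x) = \lambda \dyact x = \src(\lambda \dyact x, y)$. Source and target preservation hold by \eqref{eq:dyn-set-tensor-obj}. Similarly, $\mathsf{Q}^{(0)}$ is a bijection and respects $\src, \tgt$ because $\lambda \dyact 1 = \lambda$. Naturality of $\mathsf{Q}^{(2)}$ uses the definition \eqref{eq:dyn-set-tensor-morph}. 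The coherence equations \eqref{eq:mon-fun-1} and \eqref{eq:mon-fun-2} reduce, after evaluating on a typical element, to trivial identities in $\Lambda \times X \times Y \times Z$; this is the most bookkeeping-intensive step but offers no conceptual difficulty.

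Finally, I would handle the characterization of the essential image. If $Q \cong \mathsf{Q}(X)$ in $\Quiv{\Lambda}$, then such an isomorphism is a bijection of sets preserving $\src$ by Lemma~\ref{lem:quiv-iso}, hence it restricts to a bijection $\src_Q^{-1}(\lambda) \to \{\lambda\} \times X$ on each fiber, giving $|\src_Q^{-1}(\lambda)| = |X|$ independently of $\lambda$. Conversely, assuming \eqref{eq:Q-ess-img-cond}, fix a set $X$ in bijection with any single fiber and choose bijections $\phi_\lambda : X \to \src_Q^{-1}(\lambda)$ for each $\lambda \in \Lambda$. Define a structure map on $X$ by $\lambda \dyact_X x := \tgt_Q(\phi_\lambda(x))$; then $(\lambda, x) \mapsto \phi_\lambda(x)$ is visibly a bijection $\mathsf{Q}(X) \to Q$ preserving $\src$ and $\tgt$, so it is an isomorphism in $\Quiv{\Lambda}$ by Lemma~\ref{lem:quiv-iso}. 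The only real subtlety is invoking a (set-indexed) choice of the bijections $\phi_\lambda$; everything else is a transparent unwinding of definitions.
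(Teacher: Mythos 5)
Your proposal is correct and follows essentially the same route as the paper's proof: the same projection argument for full faithfulness, the same explicit inverse for $\mathsf{Q}^{(2)}_{X,Y}$, and the same fiber-counting construction (with a chosen family of bijections $\phi_\lambda$) for the essential image. The only addition is your explicit remark that a set-indexed choice of the $\phi_\lambda$ is needed, which the paper leaves implicit.
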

\begin{proof}
  (1) {\it Fully faithfulness}. Let $f: X \to Y$ be a morphism in $\DSet{\Lambda}$. By composing the second projection to $\mathsf{Q}(f)$, we recover the map $f$ itself. Thus $\mathsf{Q}$ is faithful. To show that $\mathsf{Q}$ is full, we let $X, Y \in \DSet{\Lambda}$. Given a morphism $\xi: \mathsf{Q}(X) \to \mathsf{Q}(Y)$ in $\Quiv{\Lambda}$, we define $f: \Lambda \times X \to Y$ to be the composition
  \begin{equation*}
    f = \left( \Lambda \times X \xrightarrow{\quad \xi \quad} \Lambda \times Y \xrightarrow{\quad \mathrm{pr}_2 \quad} Y \right),
  \end{equation*}
  where $\mathrm{pr}_2$ is the second projection. Since $\xi$ preserves the source map, we have
  \begin{equation*}
    \xi(\lambda, x) = (\lambda, f(\lambda, x))
  \end{equation*}
  for all $\lambda \in \Lambda$ and $x \in X$. Since $\xi$ also preserves the target map, we have
  \begin{equation*}
    \lambda \dyact f(\lambda, x)
    = \src(\xi(\lambda, x))
    = \src(\lambda, x)
    = \lambda \dyact x
  \end{equation*}
  for all $\lambda \in \Lambda$ and $x \in X$. Namely, $f$ is a morphism from $X$ to $Y$ in $\DSet{\Lambda}$ such that $\xi = \mathsf{Q}(f)$. Hence the functor $\mathsf{Q}$ is full.

  \medskip \noindent (2) {\it Monoidal structure}.
  We prove that $\mathsf{Q}$ a strong monoidal functor by \eqref{eq:Q-mon-str-2} and~\eqref{eq:Q-mon-str-0}. It is easy to see that $\mathsf{Q}^{(0)}$ is an isomorphism in $\Quiv{\Lambda}$. Let $X$ and $Y$ be objects of $\DSet{\Lambda}$. To investigate the properties of $\mathsf{Q}^{(2)}$, we note that
  \begin{equation*}
    \mathsf{Q}(X) \times_{\Lambda} \mathsf{Q}(Y)
    = \{ (\lambda, x, \mu, y) \in \Lambda \times X \times \Lambda \times Y \mid \mu = \lambda \dyact x \}
  \end{equation*}
  as sets. By this description and Lemma~\ref{lem:quiv-iso}, it is easy to check that $\mathsf{Q}^{(2)}_{X,Y}$ is an isomorphism in $\Quiv{\Lambda}$. For any morphisms $f: X \to X'$ and $g: Y \to Y'$ in $\DSet{\Lambda}$ and an element $(\lambda, x, \mu, y) \in \mathsf{Q}(X) \times_{\Lambda} \mathsf{Q}(Y)$, we have
  \begin{align*}
    \mathsf{Q}_{X',Y'}^{(2)}(\mathsf{Q}(f) \times_{\Lambda} \mathsf{Q}(g))(\lambda,x, \mu,y)
    & = \mathsf{Q}_{X',Y'}^{(2)}(\lambda, f(\lambda, x), \mu, g(\mu, y)) \\
    & = (\lambda, f(\lambda, x), g(\mu, y)) \\
    & = (\lambda, f(\lambda, x), g(\lambda \dyact x, y)) \\
    & = (\lambda, (f \otimes g)(\lambda, x, y)) \\
    & = \mathsf{Q}(f \otimes g) \mathsf{Q}_{X,Y}^{(2)} (\lambda, x, \mu, y).
  \end{align*}
  Thus $\mathsf{Q}^{(2)}_{X,Y}$ is an isomorphism in $\Quiv{\Lambda}$ natural in the variables $X$ and $Y$. It is easy to check that $\mathsf{Q}^{(2)}$ and $\mathsf{Q}^{(0)}$ satisfy the conditions~\eqref{eq:mon-fun-1} and~\eqref{eq:mon-fun-2}.

  \medskip \noindent (3) {\it Determination of the essential image}. Let $\mathcal{I}$ be the essential image of $\mathsf{Q}$, that is, the full subcategory of $\Quiv{\Lambda}$ consisting of all objects $Q \in \Quiv{\Lambda}$ such that $Q \cong \mathsf{Q}(X)$ for some $X \in \DSet{\Lambda}$. It is easy to see that every object of $\mathcal{I}$ satisfies the condition \eqref{eq:Q-ess-img-cond}. We now suppose that an object $Q \in \Quiv{\Lambda}$ satisfies \eqref{eq:Q-ess-img-cond}. We fix $\lambda_0 \in \Lambda$ and set $X = \src_Q^{-1}(\lambda_0)$. For each $\lambda \in \Lambda$, there is a bijection $\phi_{\lambda}: X \to \src_Q^{-1}(\lambda)$ by the assumption. Now we view the set $X$ as an object of $\DSet{\Lambda}$ by defining
  \begin{equation*}
    \lambda \dyact_X x = \tgt_Q(\phi_{\lambda}(x))
    \quad (\lambda \in \Lambda, x \in X).
  \end{equation*}
  Then the map $\mathsf{Q}(X) \to Q$ given by $(\lambda, x) \mapsto \phi_{\lambda}(x)$ is an isomorphism in $\Quiv{\Lambda}$. Thus $Q$ belongs to the essential image of $\mathsf{Q}$.
\end{proof}

\subsection{Dynamical Yang-Baxter maps}
\label{subsec:set-th-dyn-vs-quiv-th}

For a monoidal category $\mathcal{C}$, the category $\Br(\mathcal{C})$ of {\em braided objects} of $\mathcal{C}$ is defined as follows: An object of this category is a pair $(V, c)$ consisting of an object $V \in \mathcal{C}$ and a morphism $c: V \otimes V \to V \otimes V$ in $\mathcal{C}$ satisfying the braid relation
\begin{equation*}
  (c \otimes \id_V) \circ (\id_V \otimes c) \circ (c \otimes \id_V)
  = (\id_V \otimes c) \circ (c \otimes \id_V) \circ (\id_V \otimes c).
\end{equation*}
If $(V, c)$ and $(W, d)$ are objects of $\Br(\mathcal{C})$, then a morphism $f: (V, c) \to (W, d)$ in $\Br(\mathcal{C})$ is a morphism $f: V \to W$ in $\mathcal{C}$ such that
\begin{equation*}
  (f \otimes f) \circ c = d \circ (f \otimes f).
\end{equation*}

\begin{remark}
  In this paper, we do not assume that the morphism-part $c$ of a braided object $(V, c)$ is an isomorphism in the underlying category. Thus our notion of a braided object is what is called a {\em pre-braided object} in \cite{MR3081627,MR3105304,2016arXiv160708081L}. As pointed out in these papers, non-invertible braided objects naturally arise in studying several algebraic structures.
\end{remark}

The equation \eqref{eq:YBE} says that the pair $(V, \sigma)$ is a braided object of the category of vector spaces. Similarly, the set-theoretical Yang-Baxter equation \eqref{eq:set-th-YBE} is equivalent to that the pair $(X, \sigma)$ is a braided object of the category of sets. The dynamical Yang-Baxter equation~\eqref{eq:DYBE} can also be regarded as a braided object in the monoidal category $\mathcal{V}_{\mathfrak{h}}$ introduced in \cite{MR1645196}.

The category $\DSet{\Lambda}$ has a role of $\mathcal{V}_{\mathfrak{h}}$ in the study of the set-theoretical Yang-Baxter equation \eqref{eq:set-th-DYBE}. As we have introduced in Definition~\ref{def:DYB-map}, a {\em dynamical Yang-Baxter map} on a dynamical set $X$ is a family $\sigma = \{ \sigma(\lambda) \}_{\lambda \in \Lambda}$ of maps satisfying \eqref{eq:set-th-DYBE} and \eqref{eq:set-th-DYBE-inv-cond}. Shibukawa \cite{MR2742743} pointed out that $\sigma$ is a dynamical Yang-Baxter map on $X$ if and only if $(X, \sigma)$ is a braided object of $\DSet{\Lambda}$.

Now let $\Lambda$ be a non-empty set. Since the assignment $\mathcal{C} \mapsto \Br(\mathcal{C})$ is functorial with respect to strong monoidal functors, we have:

\begin{theorem}
  \label{thm:BrQ}
  The functor $\mathsf{Q}$ of Theorem~\ref{thm:dyn-set-embedding} induces a fully faithful functor
  \begin{equation*}
    \Br(\mathsf{Q}): \Br(\DSet{\Lambda}) \to \Br(\Quiv{\Lambda}).
  \end{equation*}
\end{theorem}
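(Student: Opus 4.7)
The plan is to make precise the informal remark preceding the theorem: any strong monoidal functor $F\colon\mathcal{C}\to\mathcal{D}$ induces a functor $\Br(F)\colon\Br(\mathcal{C})\to\Br(\mathcal{D})$ by transporting the braiding through the monoidal constraints of $F$, and both fullness and faithfulness of $F$ descend to $\Br(F)$. Specializing to $F=\mathsf{Q}$ and invoking Theorem~\ref{thm:dyn-set-embedding} will then yield the claim. I begin by defining $\Br(\mathsf{Q})$ on an object $(V,c)\in\Br(\DSet{\Lambda})$ by $\Br(\mathsf{Q})(V,c)=(\mathsf{Q}(V),\widetilde{c})$, where
\begin{equation*}
  \widetilde{c} = (\mathsf{Q}^{(2)}_{V,V})^{-1}\circ\mathsf{Q}(c)\circ\mathsf{Q}^{(2)}_{V,V}
  \colon \mathsf{Q}(V)\times_{\Lambda}\mathsf{Q}(V) \longrightarrow \mathsf{Q}(V)\times_{\Lambda}\mathsf{Q}(V);
\end{equation*}
this makes sense because $\mathsf{Q}^{(2)}_{V,V}$ is invertible by Theorem~\ref{thm:dyn-set-embedding}. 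To verify that $\widetilde{c}$ satisfies the braid relation in $\Quiv{\Lambda}$, I would expand each side of that relation for $\widetilde{c}$ and use the coherence identities~\eqref{eq:mon-fun-1}--\eqref{eq:mon-fun-2} for the monoidal structure of $\mathsf{Q}$ to push the $\mathsf{Q}^{(2)}$'s to the outside of the composites; what remains inside is the image under the functor $\mathsf{Q}$ of the corresponding side of the braid relation for $c$, so the desired identity follows from the braid relation for $c$ in $\DSet{\Lambda}$ together with the functoriality of $\mathsf{Q}$.

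Next I define $\Br(\mathsf{Q})(f)=\mathsf{Q}(f)$ for a morphism $f\colon(V,c)\to(W,d)$ in $\Br(\DSet{\Lambda})$. To check that $\mathsf{Q}(f)$ is a morphism in $\Br(\Quiv{\Lambda})$, that is
\begin{equation*}
  (\mathsf{Q}(f)\times_{\Lambda}\mathsf{Q}(f))\circ\widetilde{c} = \widetilde{d}\circ(\mathsf{Q}(f)\times_{\Lambda}\mathsf{Q}(f)),
\end{equation*}
I use the naturality of $\mathsf{Q}^{(2)}$ established in part~(2) of the proof of Theorem~\ref{thm:dyn-set-embedding} to commute $\mathsf{Q}(f)\times_{\Lambda}\mathsf{Q}(f)$ past the monoidal constraints, reducing the identity to the image under $\mathsf{Q}$ of the braided-morphism equation $(f\otimes f)\circ c = d\circ(f\otimes f)$. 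Preservation of identities and composition by $\Br(\mathsf{Q})$ is then inherited automatically from $\mathsf{Q}$.

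For fully faithfulness I fix $(V,c),(W,d)\in\Br(\DSet{\Lambda})$ and observe that the hom-sets in $\Br(\DSet{\Lambda})$ and $\Br(\Quiv{\Lambda})$ are cut out of $\DSet{\Lambda}(V,W)$ and $\Quiv{\Lambda}(\mathsf{Q}(V),\mathsf{Q}(W))$, respectively, by the braided-compatibility equations. Since $\mathsf{Q}$ is already a bijection on those underlying hom-sets by Theorem~\ref{thm:dyn-set-embedding}, it suffices to show that for $f\in\DSet{\Lambda}(V,W)$,
\begin{equation*}
  (f\otimes f)\circ c = d\circ(f\otimes f) \iff (\mathsf{Q}(f)\times_{\Lambda}\mathsf{Q}(f))\circ\widetilde{c} = \widetilde{d}\circ(\mathsf{Q}(f)\times_{\Lambda}\mathsf{Q}(f)).
\end{equation*}
The forward direction is exactly the computation of the second paragraph; the reverse direction follows by reading those same manipulations backwards, exploiting the invertibility of $\mathsf{Q}^{(2)}$ and the faithfulness of $\mathsf{Q}$. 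The only step with any real content is the braid-relation verification of the first paragraph, which is simply the familiar fact that $\mathcal{C}\mapsto\Br(\mathcal{C})$ extends to a 2-functor on strict monoidal categories and strong monoidal functors; no feature particular to dynamical sets or quivers enters the argument.
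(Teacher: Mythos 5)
Your argument is correct and is exactly the route the paper takes: the paper simply invokes the functoriality of $\mathcal{C}\mapsto\Br(\mathcal{C})$ with respect to strong monoidal functors, defines $\widetilde{\sigma}=(\mathsf{Q}^{(2)}_{X,X})^{-1}\circ\mathsf{Q}(\sigma)\circ\mathsf{Q}^{(2)}_{X,X}$ as you do, and lets fully faithfulness descend from Theorem~\ref{thm:dyn-set-embedding}. You have merely written out the standard verification that the paper leaves implicit.
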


In more detail, the functor $\Br(\mathsf{Q})$ sends a braided object $(X, \sigma)$ of $\DSet{\Lambda}$ to the braided object $(\mathsf{Q}(X), \widetilde{\sigma})$ of $\Quiv{\Lambda}$, where
\begin{equation}
  \label{eq:def-sigma-tilde-1}
  \widetilde{\sigma} := (\mathsf{Q}^{(2)}_{X,X})^{-1} \circ \mathsf{Q}(\sigma) \circ \mathsf{Q}^{(2)}_{X,X}.
\end{equation}
If we use the notations given by~\eqref{eq:set-th-DYBE-notation}, then the morphism $\widetilde{\sigma}$ is given by
\begin{equation}
  \label{eq:def-sigma-tilde-2}
  \widetilde{\sigma}((\lambda, x), (\mu, y))
  = ((\lambda, x \dybml{\lambda} y), (\lambda \dyact (x \dybml{\lambda} y), x \dybmr{\lambda} y))
\end{equation}
for $\lambda, \mu \in \Lambda$ and $x, y \in X$ with $\mu = \lambda \dyact x$.

A braided object of $\Quiv{\Lambda}$ is called a {\em braided quiver} over $\Lambda$ \cite{MR2183847}. The above theorem says that finding a dynamical Yang-Baxter map on $X \in \DSet{\Lambda}$ is equivalent to finding a braided quiver $(A, \sigma)$ such that $A$ is isomorphic to $\mathsf{Q}(X)$. In practice, the category $\Quiv{\Lambda}$ is easier to deal with than $\DSet{\Lambda}$. 

\begin{remark}
  \label{rem:set-th-DYBE-2}
  Let $X$ be an object of $\DSet{\Lambda}$, and let $\{ \check{\sigma}(\lambda): X \times X \to X \times X \}_{\lambda \in \Lambda}$ be a family of maps. We consider the condition
  \begin{equation}
    \label{eq:set-th-DYBE-2-inv-cond}
    (\lambda \dyact R^{(\lambda)}_{x y})
    \dyact L^{(\lambda)}_{x y} = (\lambda \dyact y) \dyact x
    \quad (\lambda \in \Lambda, x, y \in X),
  \end{equation}
  where $(L^{(\lambda)}_{x y}, R^{(\lambda)}_{x y}) = \check{\sigma}(\lambda)(x, y)$. In \cite{MR3448180}, the equation
  \begin{equation}
    \label{eq:set-th-DYBE-2}
    \check{\sigma}(\lambda)_{23}
    \circ \check{\sigma}(\lambda \dyact X^{(3)})_{12}
    \circ \check{\sigma}(\lambda)_{23}
    = \check{\sigma}(\lambda \dyact X^{(3)})_{12}
    \circ \check{\sigma}(\lambda)_{23}
    \circ \check{\sigma}(\lambda \dyact X^{(3)})_{12}
  \end{equation}
  is called the set-theoretical dynamical Yang-Baxter equation instead of~\eqref{eq:set-th-DYBE}. Recall from Remark~\ref{rem:dyn-set-tensor-2} that $\DSet{\Lambda}$ has the alternative tensor product $\check{\otimes}$. The condition \eqref{eq:set-th-DYBE-2-inv-cond} says that $\check{\sigma}$ is a morphism $\check{\sigma}: X \mathbin{\check{\otimes}} X \to X \mathbin{\check{\otimes}} X$ in $\DSet{\Lambda}$, and the equation \eqref{eq:set-th-DYBE-2} is equivalent to
  \begin{equation*}
    (\id_{X} \mathbin{\check{\otimes}} \check{\sigma})
    (\check{\sigma} \mathbin{\check{\otimes}} \id_{X})
    (\id_{X} \mathbin{\check{\otimes}} \check{\sigma})
    =
    (\check{\sigma} \mathbin{\check{\otimes}} \id_{X})
    (\id_{X} \mathbin{\check{\otimes}} \check{\sigma})
    (\check{\sigma} \mathbin{\check{\otimes}} \id_{X})
  \end{equation*}
  in $(\DSet{\Lambda}, \mathbin{\check{\otimes}}, \mathbf{1})$. Thus $\check{\sigma}$ satisfies~\eqref{eq:set-th-DYBE-2-inv-cond} and \eqref{eq:set-th-DYBE-2} if and only if
  \begin{equation*}
    \sigma(\lambda)(x, y) = (R^{(\lambda)}_{y x}, L^{(\lambda)}_{y x})
    \quad (\lambda \in \Lambda, x, y \in X)
  \end{equation*}
  is a dynamical Yang-Baxter map on $X$.
\end{remark}

\section{Solutions arising from left quasigroups}
\label{sec:sol-lqg}

\newcommand{\blankternary}{\langle -, -, -\rangle}

\subsection{Dynamical sets of PH type}

We fix a non-empty set $\Lambda$ throughout this section. A {\em left quasigroup} is a set $L$ endowed with a binary operation $*_L$ such that the map $L \to L$, $x \mapsto a *_L x$ is bijective for all $a \in L$. For a left quasigroup $L$, the {\em left division} of $L$ is the binary operation $\backslash_L$ on $L$ determined by
\begin{equation*}
  a \backslash_L (a *_L x) = x = a *_L (a \backslash_L b)
  \quad (\forall a, x \in L).
\end{equation*}
Many known examples of dynamical Yang-Baxter maps are constructed by using left quasigroups \cite{MR2181454,MR2389797,MR2742743,MR2846728,MR3374623}. Motivated by these results, we consider the following class of dynamical sets:

\begin{definition}
  \label{def:PH-type}
  We say that a dynamical set $X \in \DSet{\Lambda}$ is of {\em principal homogeneous type}, or {\em PH type} for short, if the following map is bijective:
  \begin{equation}
    \label{eq:PH-map}
    \Lambda \times X \to \Lambda \times \Lambda, \quad (\lambda, x) \mapsto (\lambda, \lambda \dyact x).
  \end{equation}
\end{definition}

If $*$ is a binary operation on $\Lambda$ such that $(\Lambda, *)$ is a left quasigroup, then the set $X = \Lambda$ is a dynamical set over $\Lambda$ of PH type by $\dyact_X = *$. For example, the binary operation $*$ given by $\lambda * \mu = \mu$ makes $\Lambda$ a left quasigroup, and the resulting dynamical set is the terminal object $K_{\Lambda} \in \DSet{\Lambda}$ mentioned in Lemma~\ref{lem:dyn-set-terminal}. Every dynamical set over $\Lambda$ of PH type can be obtained from a left quasigroup structure on $\Lambda$ in this way. In fact, the following stronger statement holds true:

\begin{lemma}
  \label{lem:dyn-set-PH-unique}
  An object $X \in \DSet{\Lambda}$ is of PH type if and only if $X \cong K_{\Lambda}$.
\end{lemma}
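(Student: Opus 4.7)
The plan is to prove both directions by reducing to the canonical morphism $\psi_X: X \to K_{\Lambda}$ from the proof of Lemma~\ref{lem:dyn-set-terminal}, which is given by $\psi_X(\lambda, x) = \lambda \dyact_X x$. The key observation is that $\psi_X$, viewed as a family $\{\psi_X(\lambda): X \to \Lambda\}_{\lambda \in \Lambda}$ of maps with $\psi_X(\lambda)(x) = \lambda \dyact_X x$, is essentially the PH map \eqref{eq:PH-map} "fibered over $\Lambda$" (since the PH map fixes the first coordinate). Hence the PH map is bijective if and only if $\psi_X(\lambda)$ is a bijection for every $\lambda \in \Lambda$.

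For the "if" direction, I first check that $K_{\Lambda}$ itself is of PH type: by definition of $\dyact_{K_{\Lambda}}$, the map \eqref{eq:PH-map} for $K_{\Lambda}$ is simply $(\lambda, k) \mapsto (\lambda, k)$, which is the identity and hence bijective. I would then show that being of PH type is invariant under isomorphism in $\DSet{\Lambda}$. Given an isomorphism $f: X \to Y$ in $\DSet{\Lambda}$ with $Y$ of PH type, Lemma~\ref{lem:dyn-set-isom} ensures each $f(\lambda)$ is a bijection, and the defining property $\lambda \dyact_Y f(\lambda, x) = \lambda \dyact_X x$ factors the PH map of $X$ as the composition of the two bijections $(\lambda, x) \mapsto (\lambda, f(\lambda, x))$ and the PH map of $Y$. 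So the PH map of $X$ is bijective.

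For the "only if" direction, suppose $X$ is of PH type. By the observation above, the map $\psi_X(\lambda): X \to \Lambda$, $x \mapsto \lambda \dyact_X x$, is a bijection for every $\lambda \in \Lambda$. By Lemma~\ref{lem:dyn-set-isom}, $\psi_X$ is an isomorphism in $\DSet{\Lambda}$, so $X \cong K_{\Lambda}$.

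There is essentially no obstacle here: the content of the lemma is simply unpacking the fact that the PH condition on $X$ is literally the condition that the canonical terminal map $\psi_X$ becomes an isomorphism under the criterion of Lemma~\ref{lem:dyn-set-isom}. The only subtlety worth stating carefully is the equivalence between bijectivity of the PH map $\Lambda \times X \to \Lambda \times \Lambda$ and bijectivity of each fiber map $x \mapsto \lambda \dyact x$, which holds because the PH map respects the first projection.
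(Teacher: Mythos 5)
Your proof is correct. It reaches the same pivot as the paper --- namely that the PH condition on $X$ is exactly the condition that the canonical morphism $\psi_X: X \to K_{\Lambda}$ is an isomorphism --- but by a different route. The paper observes that the map \eqref{eq:PH-map} is literally the morphism $\mathsf{Q}(\psi_X)$ in $\Quiv{\Lambda}$, so bijectivity of \eqref{eq:PH-map} is equivalent to $\psi_X$ being an isomorphism because the fully faithful functor $\mathsf{Q}$ reflects isomorphisms and isomorphisms in $\Quiv{\Lambda}$ are just bijections (Lemma~\ref{lem:quiv-iso}); the ``if'' direction then comes for free from the terminality of $K_{\Lambda}$, since any isomorphism $X \to K_{\Lambda}$ must coincide with $\psi_X$. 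You instead stay entirely inside $\DSet{\Lambda}$: you justify the same equivalence by the elementary fiberwise observation that a first-coordinate-preserving map $\Lambda \times X \to \Lambda \times \Lambda$ is bijective if and only if each fiber map $x \mapsto \lambda \dyact x$ is, and then invoke Lemma~\ref{lem:dyn-set-isom}; for the ``if'' direction you separately verify that the PH property is invariant under isomorphism in $\DSet{\Lambda}$ rather than appealing to terminality. Both arguments are complete; the paper's is shorter and showcases the quiver embedding that drives the rest of the paper, while yours is self-contained and more elementary.
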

\begin{proof}
  There is a morphism $\psi_X: X \to K_{\Lambda}$ in $\DSet{\Lambda}$ given by~\eqref{eq:dyn-set-terminal-map}. The map~\eqref{eq:PH-map} is just the morphism $\mathsf{Q}(\psi_X): \mathsf{Q}(X) \to \mathsf{Q}(K_{\Lambda})$ in $\Quiv{\Lambda}$. By the faithfulness of $\mathsf{Q}$, it is clear that $X$ is of PH type if and only if $\psi_X$ is an isomorphism.
\end{proof}

Thus, strange as it may sound, a dynamical set over $\Lambda$ of PH type is unique up to isomorphism. Nevertheless we have introduced this terminology, since we will deal with many such objects having various different appearances.

\subsection{Classification of the solutions}

Let $X$ be a dynamical set over $\Lambda$ of PH type. As an analogue of the left division of a left quasigroup, for elements $\lambda, \lambda' \in \Lambda$, we define the {\em left division} $\lambda \backslash_X \lambda' \in X$ to be the unique element of $X$ such that
\begin{equation}
  \label{eq:left-div-dyn-set-PH}
  \lambda \dyact (\lambda \backslash_X \lambda') = \lambda'.
\end{equation}
We recall Shibukawa's classification result of the dynamical Yang-Baxter maps on $X$ in a slightly reformulated form. A {\em ternary operation} on a set $M$ is just a map from $M \times M \times M$ to $M$. Suppose that a ternary operation
\begin{equation}
  \label{eq:ternary-on-Lambda}
  \blankternary: \Lambda \times \Lambda \times \Lambda \to \Lambda, \quad (a, b, c) \mapsto \langle a, b, c \rangle
  \quad (a, b, c \in \Lambda)
\end{equation}
on the set $\Lambda$ is given. For $\lambda \in \Lambda$ and $x, y \in X$, we define
\begin{equation}
  \label{eq:sigma-ternary-def-1}
  \sigma(\lambda)(x, y) = (x \dybml{\lambda} y, x \dybmr{\lambda} y),
\end{equation}
where
\begin{align}
  \label{eq:sigma-ternary-def-2}
  x \dybml{\lambda} y
  & = \lambda \Big\backslash_{X} \langle \lambda, \lambda \dyact x, (\lambda \dyact x) \dyact y \rangle, \\
  \label{eq:sigma-ternary-def-3}
  x \dybmr{\lambda} y
  & = \langle \lambda, \lambda \dyact x, (\lambda \dyact x) \dyact y \rangle
    \Big \backslash_X ((\lambda \dyact x) \dyact y).
\end{align}
We remark that the symbols $x \dybml{\lambda} y$ and $x \dybmr{\lambda} y$ are defined so that the family
\begin{equation}
  \label{eq:sigma-ternary-def-4}
  \sigma := \{ \sigma(\lambda): X \times X \to X \times X \}_{\lambda \in \Lambda}
\end{equation}
of maps is a morphism $X \otimes X \to X \otimes X$ in $\DSet{\Lambda}$. Shibukawa \cite[Theorem 3.2]{MR2389797} proved:

\begin{theorem}
  \label{thm:dyn-set-PH-DYBE}
  The morphism $\sigma: X \otimes X \to X \otimes X$ is a dynamical Yang-Baxter map on $X$ if and only if the ternary operation \eqref{eq:ternary-on-Lambda} satisfies the equations
  \begin{gather}
    \label{eq:Shibu-ternary-1}
    \langle a, \langle a, b, c \rangle, \langle \langle a, b, c \rangle, c, d \rangle \rangle
    = \langle a, b, \langle b, c, d \rangle \rangle, \\
    \label{eq:Shibu-ternary-2}
    \langle \langle a, b, c \rangle, c, d \rangle
    = \langle \langle a, b, \langle b, c, d \rangle \rangle, \langle b, c, d \rangle, d \rangle
    \phantom{,}
  \end{gather}
  for all $a, b, c, d \in \Lambda$. Moreover, every dynamical Yang-Baxter map on $X$ can be obtained in this way from such a ternary operation on $\Lambda$.
\end{theorem}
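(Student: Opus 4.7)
The plan is to use Theorem~\ref{thm:BrQ} to translate the classification problem into one about braided quivers. By Lemma~\ref{lem:dyn-set-PH-unique} the PH type hypothesis means that $\mathsf{Q}(X)$ is isomorphic in $\Quiv{\Lambda}$ to the quiver $\mathsf{Q}(K_{\Lambda})$, which has exactly one arrow from $\lambda$ to $\mu$ for every ordered pair $(\lambda,\mu) \in \Lambda \times \Lambda$. Consequently the fiber product $\mathsf{Q}(X) \times_{\Lambda} \mathsf{Q}(X)$ is naturally in bijection with the set $\Lambda^3$ of composable 2-paths $a \to b \to c$, and any source- and target-preserving self-map of this fiber product is uniquely determined by a ternary operation $\blankternary \colon \Lambda^3 \to \Lambda$ recording the new middle vertex. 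For the braided quiver $(\mathsf{Q}(X),\widetilde{\sigma})$ corresponding to $\sigma$ via~\eqref{eq:def-sigma-tilde-1}, this reads
\[
\widetilde{\sigma}\bigl(a \to b \to c\bigr) \;=\; \bigl(a \to \langle a,b,c \rangle \to c\bigr).
\]

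Next, I would unwind this identification back to $\sigma$ itself on $X$. Using the left division~\eqref{eq:left-div-dyn-set-PH} together with the explicit quiver formula~\eqref{eq:def-sigma-tilde-2}, the pair $(x',y') := \sigma(\lambda)(x,y)$ is determined by $\lambda \dyact x' = \langle \lambda,\, \lambda \dyact x,\, (\lambda \dyact x)\dyact y \rangle$ and the invariance identity $(\lambda \dyact x')\dyact y' = (\lambda \dyact x)\dyact y$. Solving for $x'$ and $y'$ yields precisely the formulas~\eqref{eq:sigma-ternary-def-2}--\eqref{eq:sigma-ternary-def-3}. This establishes a bijection between ternary operations on $\Lambda$ and $\DSet{\Lambda}$-morphisms $X \otimes X \to X \otimes X$, with the invariance condition~\eqref{eq:set-th-DYBE-inv-cond} holding automatically, and reduces the theorem to verifying the braid relation for $\widetilde{\sigma}$.

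To handle the braid relation, I would apply both sides of the braid equation to a generic composable 3-path $a \to b \to c \to d$ in the triple fiber product and compare. A direct chase shows that $(\widetilde{\sigma} \times_{\Lambda} \id) \circ (\id \times_{\Lambda} \widetilde{\sigma}) \circ (\widetilde{\sigma} \times_{\Lambda} \id)$ sends $a \to b \to c \to d$ to
\[
a \to \langle a, \langle a,b,c \rangle, \langle \langle a,b,c \rangle, c, d \rangle \rangle \to \langle \langle a,b,c \rangle, c, d \rangle \to d,
\]
whereas $(\id \times_{\Lambda} \widetilde{\sigma}) \circ (\widetilde{\sigma} \times_{\Lambda} \id) \circ (\id \times_{\Lambda} \widetilde{\sigma})$ sends it to
\[
a \to \langle a, b, \langle b,c,d \rangle \rangle \to \langle \langle a, b, \langle b,c,d \rangle \rangle, \langle b,c,d \rangle, d \rangle \to d.
\]
The first and last vertices agree automatically; equating the second and third vertices of the two paths yields exactly~\eqref{eq:Shibu-ternary-1} and~\eqref{eq:Shibu-ternary-2}, completing the proof.

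I anticipate no serious conceptual obstacle; the argument amounts to unpacking the monoidal isomorphism $\mathsf{Q}^{(2)}$ in the PH setting. The main care will be in making the identification $\mathsf{Q}(X) \times_{\Lambda} \mathsf{Q}(X) \cong \Lambda^3$ rigorous and in matching the quiver-level formula~\eqref{eq:def-sigma-tilde-2} term-by-term with Shibukawa's expressions~\eqref{eq:sigma-ternary-def-2}--\eqref{eq:sigma-ternary-def-3}, so that the dictionary between ternary operations and morphisms in $\DSet{\Lambda}$ is bijective.
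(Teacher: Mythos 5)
Your proposal is correct and follows essentially the same route as the paper: identify $\mathsf{Q}(X)$ as a complete quiver, set up the bijection between ternary operations and $\DSet{\Lambda}$-endomorphisms of $X\otimes X$ (the paper's Lemmas~\ref{lem:mor-ternary} and~\ref{lem:equiv-ternary-X2}), and then read off \eqref{eq:Shibu-ternary-1}--\eqref{eq:Shibu-ternary-2} by applying both sides of the braid relation to a composable $3$-path $a\to b\to c\to d$. The two displayed paths in your computation match the paper's verbatim, so no further comparison is needed.
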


To avoid confusion with the composition of maps, we use the symbol $\diamond$ to express the composition of morphisms in $\DSet{\Lambda}$. We say that two dynamical Yang-Baxter maps $(X, \sigma)$ and $(Y, \tau)$ are {\em equivalent} if they are isomorphic in $\Br(\DSet{\Lambda})$, that is, if there is an isomorphism $f: X \to Y$ in $\DSet{\Lambda}$ such that
\begin{equation*}
  (f \otimes f) \diamond \sigma = \tau \diamond (f \otimes f).
\end{equation*}
As in Lemma~\ref{lem:dyn-set-terminal}, we denote by $\psi_X: X \to K_{\Lambda}$ the unique morphism in $\DSet{\Lambda}$. In this section, we also give the following classification up to equivalence.

\begin{theorem}
  \label{thm:DYB-map-PH-equiv}
  Let $X_i$ \textup{(}$i = 1, 2$\textup{)} be a dynamical set over $\Lambda$ of PH type, and let $T_i$ be a ternary operation on $\Lambda$ satisfying \eqref{eq:Shibu-ternary-1} and~\eqref{eq:Shibu-ternary-2}. We define $\sigma_i$ by \eqref{eq:sigma-ternary-def-1}--\eqref{eq:sigma-ternary-def-4} with $X = X_i$ and $\blankternary = T_i$. Then $(X_1, \sigma_1)$ and $(X_2, \sigma_2)$ are equivalent if and only if $T_1 = T_2$. If this is the case, then we have
  \begin{equation*}
    \sigma_{2} = (\psi_{21} \otimes \psi_{21}) \diamond \sigma_1 \diamond (\psi_{12} \otimes \psi_{12}),
  \end{equation*}
  where $\psi_{i j} = (\psi_{X_i})^{-1} \diamond \psi_{X_j}$ \textup{(}$i, j = 1, 2$\textup{)}.
\end{theorem}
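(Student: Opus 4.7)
My plan is to use the quiver-theoretical embedding $\mathsf{Q}$ to transport both dynamical Yang-Baxter maps to a common braided quiver on $\mathsf{Q}(K_\Lambda)$, which will visibly depend only on the ternary operation. The first step is to observe that any equivalence between $(X_1, \sigma_1)$ and $(X_2, \sigma_2)$ is necessarily implemented by $\psi_{21}$: indeed, since $\psi_{X_i}$ is an isomorphism by Lemma~\ref{lem:dyn-set-PH-unique} and $K_\Lambda$ is terminal (Lemma~\ref{lem:dyn-set-terminal}), any morphism $f: X_1 \to X_2$ in $\DSet{\Lambda}$ satisfies $\psi_{X_2} \diamond f = \psi_{X_1}$, forcing $f = \psi_{21}$. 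Hence $(X_1, \sigma_1)$ and $(X_2, \sigma_2)$ are equivalent if and only if $(\psi_{21} \otimes \psi_{21}) \diamond \sigma_1 = \sigma_2 \diamond (\psi_{21} \otimes \psi_{21})$, which rearranges via $\psi_{12} = \psi_{21}^{-1}$ to the formula in the statement.

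Next, I would push this condition through the embedding. Set $\phi_i = \mathsf{Q}(\psi_{X_i}): \mathsf{Q}(X_i) \to \mathsf{Q}(K_\Lambda)$ and define $S_i$ to be the transport of $\widetilde{\sigma}_i$ along $\phi_i$ (using the strong monoidal structure $\mathsf{Q}^{(2)}$ to identify $\mathsf{Q}(X_i) \times_\Lambda \mathsf{Q}(X_i)$ with $\mathsf{Q}(X_i \otimes X_i)$). Since $\mathsf{Q}(\psi_{21}) = \phi_2^{-1} \circ \phi_1$ and $\Br(\mathsf{Q})$ is fully faithful (Theorem~\ref{thm:BrQ}), the equivalence of the braided objects reduces to the equality $S_1 = S_2$ of braidings on the single quiver $\mathsf{Q}(K_\Lambda)$.

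The final step is a short computation: $\phi_i$ sends $(\lambda, x)$ to $(\lambda, \lambda \dyact_{X_i} x)$, with inverse $(a, b) \mapsto (a, a \backslash_{X_i} b)$. Plugging these together with the formulas \eqref{eq:sigma-ternary-def-1}--\eqref{eq:sigma-ternary-def-3} and the defining identity \eqref{eq:left-div-dyn-set-PH} into the transport, one finds that $S_i$ sends a composable pair $((a, b), (b, d))$ to $((a, p_i), (p_i, d))$ with $p_i = T_i(a, b, d)$. Thus $S_i$ depends only on $T_i$, and $S_1 = S_2 \iff T_1 = T_2$. The main obstacle in this plan is simply keeping the bookkeeping consistent in the transport calculation; this is clean precisely because the formulas \eqref{eq:sigma-ternary-def-2}--\eqref{eq:sigma-ternary-def-3} were designed so that the left-division data $\backslash_{X_i}$ cancels against $\phi_i$, leaving an expression purely in terms of $T_i$.
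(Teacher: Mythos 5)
Your proposal is correct and follows essentially the same route as the paper: both arguments reduce to the terminal object $K_{\Lambda}$ (equivalently, the complete quiver $\mathsf{Q}(K_{\Lambda})$), where the transported braiding is visibly determined by the ternary operation via the rule \eqref{eq:attaching-rule}, and both exploit the uniqueness of morphisms into $K_{\Lambda}$ to force the intertwiner to be $\psi_{21}$. The paper packages the transport computation as a separate lemma (proved by the same quiver diagram you describe), but the substance is identical.
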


Explicitly, the morphism $\psi_{i j}: X_j \to X_i$ in the above is given by
\begin{equation*}
  \psi_{i j}(\lambda, x) = \lambda \backslash_{X_i} (\lambda \dyact_{X_j} x)
  \quad (\lambda \in \Lambda, x \in X_j).
\end{equation*}

We give proofs of these theorems by emphasizing the relation between dynamical sets over $\Lambda$ and quivers over $\Lambda$. We will see that the quivers corresponding to the dynamical sets of PH type are of very simple form, and the equations \eqref{eq:Shibu-ternary-1} and \eqref{eq:Shibu-ternary-2} naturally arise from the braid equation on such quivers. Our point of view also clarifies relations between the properties of the ternary operation on $\Lambda$ and the properties of the resulting dynamical Yang-Baxter maps.

\subsection{Morphisms on a complete quiver}

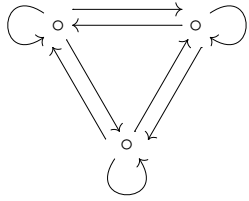
\begin{figure}
  \begin{equation*}
    \SelectTips{cm}{}
    \xymatrix@R=34.64pt@C=15.00pt{
      \circ \ar@<+6pt>[rr] \ar@<+0pt>[rd] \ar@(ul,dl) & &
      \circ \ar@<+0pt>[ll] \ar@<+6pt>[ld] \ar@(ur,dr) \\ &
      \circ \ar@<+0pt>[ru] \ar@<+6pt>[lu] \ar@(dl,dr) \\
    }
  \end{equation*}
  \caption{Complete quiver over a set of three elements}
  \label{fig:complete-quiv}
\end{figure}

A {\em complete quiver} over $\Lambda$ is a quiver $A$ over $\Lambda$ such that there exists a unique arrow of $A$ from $\lambda$ to $\mu$ for each pair $(\lambda, \mu)$ of elements of $\Lambda$. We warn that a complete quiver is {\em not} one whose underlying graph is a complete graph. For example, if $\Lambda$ consists of three elements, a complete quiver over $\Lambda$ is depicted as in Figure~\ref{fig:complete-quiv}.

A complete quiver over $\Lambda$ is unique up to isomorphism. More precisely, if $A$ is a complete quiver over $\Lambda$, then there is an isomorphism
\begin{equation*}
  A \to \mathsf{Q}(K), \quad a \mapsto (\src(a), \tgt(a)) \quad (a \in A)
\end{equation*}
of quivers, where $K = K_{\Lambda}$ is the object of $\DSet{\Lambda}$ discussed in Lemma~\ref{lem:dyn-set-terminal}. In this sense, a complete quiver can be thought of as a quiver-theoretical counterpart of the notion of a dynamical set of PH type.

Now let $A$ be a complete quiver over $\Lambda$. For $\lambda, \mu \in \Lambda$, we denote by $\lambda \to \mu$ the unique element $a \in A$ such that $\src(a) = \lambda$ and $\tgt(a) = \mu$. Then we have
\begin{equation*}
  \underbrace{A \times_{\Lambda} \dotsb \times_{\Lambda} A}_{\text{$m$ times}}
  = \{ (\lambda_1 \to \lambda_2, \lambda_2 \to \lambda_3, \dotsc, \lambda_{m} \to \lambda_{m+1}) \mid \lambda_1, \dotsc, \lambda_{m+1} \in \Lambda \}
\end{equation*}
for a positive integer $m$. For simplicity, we write
\begin{equation}
  \label{eq:uniq-arr-notation}
  \lambda_1 \to \lambda_2 \to \dotsb \to \lambda_m
  := (\lambda_1 \to \lambda_2, \lambda_2 \to \lambda_3, \dotsc, \lambda_{m - 1} \to \lambda_m).
\end{equation}
Let $f: A \times_{\Lambda} A \to A \times_{\Lambda} A$ be a morphism in $\Quiv{\Lambda}$, and let $\lambda, \mu, \nu \in \Lambda$. Since $f$ preserves the source and the target maps, there is a unique element $\mu' \in \Lambda$ such that $f(\lambda \to \mu \to \nu) = \lambda \to \mu' \to \nu$. We thus obtain a ternary operation $\blankternary_f$ on $\Lambda$ uniquely determined by the property that
\begin{equation*}
  f(\lambda \to \mu \to \nu) = a \to \langle \lambda, \mu, \nu \rangle_f \to \nu
\end{equation*}
for all $\lambda, \mu, \nu \in \Lambda$. It is obvious that there holds:

\begin{lemma}
  \label{lem:mor-ternary}
  The assignment $f \mapsto \blankternary_{f}$ gives a bijection
  \begin{equation}
    \label{eq:def-mor-ternary-2}
    \Quiv{\Lambda}(A \times_{\Lambda} A, A \times_{\Lambda} A)
    \xrightarrow{\quad \cong \quad}
    \Map(\Lambda \times \Lambda \times \Lambda, \Lambda).
  \end{equation}
\end{lemma}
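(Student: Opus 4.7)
The plan is to use the completeness of $A$ to reduce the data of a morphism $A \times_{\Lambda} A \to A \times_{\Lambda} A$ in $\Quiv{\Lambda}$ to a ternary operation on $\Lambda$, essentially by bookkeeping.

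First I would unpack the domain. Using notation~\eqref{eq:uniq-arr-notation}, every element of $A \times_\Lambda A$ is of the form $\lambda \to \mu \to \nu$ for a unique triple $(\lambda,\mu,\nu) \in \Lambda^3$; this is where completeness is used, since the unique arrow from $\alpha$ to $\beta$ in $A$ is determined by its endpoints. Note that the source and target of $\lambda \to \mu \to \nu$ in $A \times_\Lambda A$ are $\lambda$ and $\nu$, respectively.

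Now let $f: A \times_{\Lambda} A \to A \times_{\Lambda} A$ be a morphism in $\Quiv{\Lambda}$ and fix $\lambda,\mu,\nu \in \Lambda$. Since $f$ preserves source and target, $f(\lambda \to \mu \to \nu)$ must have source $\lambda$ and target $\nu$, so by the uniqueness above it has the form $\lambda \to \mu' \to \nu$ for a unique $\mu' \in \Lambda$; define $\langle \lambda,\mu,\nu\rangle_f := \mu'$. This gives the forward map in \eqref{eq:def-mor-ternary-2}.

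For the inverse, given any ternary operation $T: \Lambda^3 \to \Lambda$, define
\begin{equation*}
  f_T(\lambda \to \mu \to \nu) := \lambda \to T(\lambda,\mu,\nu) \to \nu
  \quad (\lambda,\mu,\nu \in \Lambda).
\end{equation*}
The right-hand side is a well-defined element of $A \times_\Lambda A$ because in the complete quiver $A$ the arrows $\lambda \to T(\lambda,\mu,\nu)$ and $T(\lambda,\mu,\nu) \to \nu$ exist and are unique. By construction $f_T$ preserves source and target, so it is a morphism in $\Quiv{\Lambda}$. Finally, one observes $\langle-,-,-\rangle_{f_T} = T$ directly from the definitions, and conversely $f_{\langle-,-,-\rangle_f} = f$ since both morphisms agree on every element $\lambda \to \mu \to \nu$, which exhausts $A \times_\Lambda A$. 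Hence the two assignments are mutually inverse, proving the bijection. There is no real obstacle here: the only subtle point is noticing that source/target-preservation, combined with the completeness of $A$, leaves exactly one degree of freedom (the middle vertex) in the image of each length-two path.
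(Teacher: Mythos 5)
Your proposal is correct and follows exactly the argument the paper sketches before declaring the lemma obvious: source/target preservation plus completeness of $A$ pin down everything except the middle vertex, and the inverse assignment $T \mapsto f_T$ is the evident one. Nothing to add.
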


\subsection{Quiver-theoretical aspect of the construction}

Let $X$ be a dynamical set over $\Lambda$ of PH type. We explain that the formulas \eqref{eq:sigma-ternary-def-1}--\eqref{eq:sigma-ternary-def-4} naturally arise from our quiver-theoretical approach. A key observation is that $A := \mathsf{Q}(X)$ is a complete quiver over $\Lambda$. Since the functor $\mathsf{Q}$ is fully faithful, we have a bijection
\begin{equation}
  \label{eq:equiv-X2-A2}
  \begin{aligned}
    \DSet{\Lambda}(X \otimes X, X \otimes X)
    & \to \Quiv{\Lambda}(A \times_{\Lambda} A, A \times_{\Lambda} A), \\
    f & \mapsto (\mathsf{Q}^{(2)}_{X,X})^{-1} \circ \mathsf{Q}(f) \circ \mathsf{Q}_{X,X}^{(2)},
  \end{aligned}
\end{equation}
where $\mathsf{Q}^{(2)}$ is the monoidal structure of $\mathsf{Q}$. We now obtain a bijection
\begin{equation}
  \label{eq:equiv-ternary-X2}
  \Map(\Lambda \times \Lambda \times \Lambda, \Lambda)
  \xrightarrow{\quad \cong \quad}
  \DSet{\Lambda}(X \otimes X, X \otimes X)
\end{equation}
by composing the bijections \eqref{eq:def-mor-ternary-2} and~\eqref{eq:equiv-X2-A2}. This bijection clarifies the meaning of the defining formulas~\eqref{eq:sigma-ternary-def-1}--\eqref{eq:sigma-ternary-def-4} of the morphism $\sigma$ of Theorem~\ref{thm:dyn-set-PH-DYBE}.

\begin{lemma}
  \label{lem:equiv-ternary-X2}
  The morphism $\sigma: X \otimes X \to X \otimes X$ in $\DSet{\Lambda}$ corresponding to a ternary operation $\blankternary$ on $\Lambda$ via \eqref{eq:equiv-ternary-X2} is given by the formulas~\eqref{eq:sigma-ternary-def-1}--\eqref{eq:sigma-ternary-def-4}.
\end{lemma}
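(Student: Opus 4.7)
The plan is a direct unwinding of the two bijections composing \eqref{eq:equiv-ternary-X2}. First I would fix the complete quiver $A := \mathsf{Q}(X)$ and interpret the arrow notation \eqref{eq:uniq-arr-notation}: since the elements of $\mathsf{Q}(X) = \Lambda \times X$ have source $\lambda$ and target $\lambda \dyact x$ on the element $(\lambda, x)$, the unique arrow of $A$ from $\lambda$ to $\mu$ is the pair $(\lambda,\, \lambda \backslash_X \mu)$. Consequently, $(\mathsf{Q}^{(2)}_{X,X})^{-1}$ sends an element $(\lambda, (x, y)) \in \mathsf{Q}(X \otimes X)$ to the composable pair $((\lambda, x),\, (\lambda \dyact x, y)) \in A \times_\Lambda A$, which in the arrow notation reads
\begin{equation*}
  \lambda \,\to\, \lambda \dyact x \,\to\, (\lambda \dyact x) \dyact y.
\end{equation*}

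Next I would apply the morphism $f \in \Quiv{\Lambda}(A \times_\Lambda A,\, A \times_\Lambda A)$ assigned to the ternary operation $\blankternary$ via Lemma~\ref{lem:mor-ternary}. By construction, $f$ fixes the outer vertices of such a composable pair and replaces the middle vertex by $\mu' := \langle \lambda,\, \lambda \dyact x,\, (\lambda \dyact x) \dyact y \rangle$, producing the arrow $\lambda \to \mu' \to (\lambda \dyact x) \dyact y$. Translating this back to a pair in $A \times_\Lambda A$ by the description of arrows above, and then pushing forward by $\mathsf{Q}^{(2)}_{X,X}$, yields the element $(\lambda,\, (\lambda \backslash_X \mu',\ \mu' \backslash_X ((\lambda \dyact x) \dyact y))) \in \mathsf{Q}(X \otimes X)$.

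Finally, I would invoke the inverse of \eqref{eq:equiv-X2-A2} together with the explicit action of $\mathsf{Q}$ on morphisms to conclude that the $\DSet{\Lambda}$-morphism $\sigma$ arising from $f$ is characterized by
\begin{equation*}
  \sigma(\lambda)(x, y) = \bigl(\lambda \backslash_X \mu',\ \mu' \backslash_X ((\lambda \dyact x) \dyact y)\bigr),
\end{equation*}
which is literally \eqref{eq:sigma-ternary-def-1}--\eqref{eq:sigma-ternary-def-3}. The entire argument is bookkeeping and has no substantive obstacle. The only pitfall to watch for is keeping the two worlds separate: morphisms in $\Quiv{\Lambda}$ are plain maps on arrow sets while morphisms in $\DSet{\Lambda}$ are $\Lambda$-parametrized families, and the explicit formula for $\mathsf{Q}^{(2)}$ is precisely what translates one convention into the other.
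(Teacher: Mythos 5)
Your proposal is correct and follows essentially the same route as the paper: both unwind the composite bijection by writing the element $((\lambda,x),(\lambda\dyact x,y))$ of $A\times_\Lambda A$ in the arrow notation $\lambda\to\lambda\dyact x\to(\lambda\dyact x)\dyact y$, applying the quiver morphism that replaces the middle vertex by $\langle\lambda,\lambda\dyact x,(\lambda\dyact x)\dyact y\rangle$, and translating back via the left division $\backslash_X$ and the faithfulness of $\mathsf{Q}$. The bookkeeping with $\mathsf{Q}^{(2)}_{X,X}$ and its inverse is handled exactly as in the paper, so there is nothing to add.
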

\begin{proof}
  Let $\blankternary$ be a ternary operation on $\Lambda$, and let $\sigma: X \otimes X \to X \otimes X$ be the morphism in $\DSet{\Lambda}$ corresponding to $\blankternary$ via~\eqref{eq:equiv-ternary-X2}. We write
  \begin{equation*}
    \sigma(\lambda)(x, y) = (L^{(\lambda)}_{x y}, R^{(\lambda)}_{x y})
  \end{equation*}
  for $\lambda \in \Lambda$ and $x, y \in X$. Let $\widetilde{\sigma}: A \times_{\Lambda} A \to A \times_{\Lambda} A$ be the morphism corresponding to $\sigma$ via~\eqref{eq:equiv-X2-A2}. Then we have
  \begin{equation}
    \label{eq:sigma-tilde-formula-1}
    \widetilde{\sigma}((\lambda, x), (\mu, y))
    = ((\lambda, L^{(\lambda)}_{x y}), (\lambda \dyact L^{(\lambda)}_{x y}, R^{(\lambda)}_{x y}))
  \end{equation}
  for $\lambda, \mu \in \Lambda$ and $x, y \in X$ with $\mu = \lambda \dyact x$.

  By definition, $\widetilde{\sigma}$ corresponds to the ternary operation $\blankternary$ via~\eqref{eq:def-mor-ternary-2}. As in the previous subsection, we use the notation~\eqref{eq:uniq-arr-notation} to express elements of the power of $A := \mathsf{Q}(X)$ with respect to $\times_{\Lambda}$. We then have
  \begin{equation}
    \label{eq:uniq-arr-formula}
    \lambda \to (\lambda \dyact x) = (\lambda, x)
    \quad \text{and} \quad
    \lambda \to \lambda' = (\lambda, \lambda \backslash_X \lambda')
    \quad (x \in X, \lambda, \lambda' \in \Lambda),
  \end{equation}
  where $\backslash_X$ is the left division defined by~\eqref{eq:left-div-dyn-set-PH}. By \eqref{eq:Shibu-ternary-1}, \eqref{eq:uniq-arr-notation} and~\eqref{eq:uniq-arr-formula}, we also have the following expression of $\widetilde{\sigma}$\,:
  \begin{equation}
    \label{eq:sigma-tilde-formula-2}
    \begin{aligned}
      \widetilde{\sigma}(a \to b \to c)
      & = a \to \langle a, b, c \rangle \to c \\
      & = (a \to \langle a, b, c \rangle,  \langle a, b, c \rangle \to c) \\
      & = ((a, a \backslash_X \langle a, b, c \rangle), (\langle a, b, c \rangle, \langle a, b, c \rangle \backslash_X c)).
    \end{aligned}
  \end{equation}
  Now we consider the case where $a = \lambda$, $b = \lambda \dyact x$ and $c = (\lambda \dyact x) \dyact y$ for some $\lambda \in \Lambda$ and $x, y \in X$. Then, by~\eqref{eq:uniq-arr-formula}, we have
  \begin{equation*}
    a \to b \to c
    = ((\lambda, x), (\lambda \dyact x, y))
  \end{equation*}
  as elements of $A \times_{\Lambda} A$. Comparing~\eqref{eq:sigma-tilde-formula-1} with~\eqref{eq:sigma-tilde-formula-2}, we conclude that
  \begin{align*}
    L^{(\lambda)}_{x y}
    & = a \backslash_X \langle a, b, c \rangle
      = \lambda \backslash_X \langle \lambda, \lambda \dyact x, (\lambda \dyact x) \dyact y \rangle \\
    R^{(\lambda)}_{x y}
    & = \langle a, b, c \rangle \backslash_X c
      = \langle \lambda, \lambda \dyact x, (\lambda \dyact x) \dyact y \rangle \backslash_X ((\lambda \dyact x) \dyact y).
  \end{align*}
  Thus $\sigma$ is given by the formulas~\eqref{eq:sigma-ternary-def-1}--\eqref{eq:sigma-ternary-def-4}.
\end{proof}

\subsection{Proof of Theorem~\ref{thm:dyn-set-PH-DYBE}}
\label{subsec:proof-thm-33}

Let, as in Theorem~\ref{thm:dyn-set-PH-DYBE}, $X$ be a dynamical set over $\Lambda$ of PH type. Now we give a proof of Shibukawa's classification result on the dynamical Yang-Baxter maps on $X$.

We denote by $A = \mathsf{Q}(X)$ the corresponding quiver. Let $\sigma: X \otimes X \to X \otimes X$ be a morphism in $\DSet{\Lambda}$, and let $\widetilde{\sigma}: A \times_{\Lambda} A \to A \times_{\Lambda} A$ be the morphism corresponding to $\sigma$ via~\eqref{eq:equiv-X2-A2}. The argument of the previous subsection shows that there is a unique ternary operation $\blankternary$ on $\Lambda$ such that the morphism $\sigma$ is expressed by \eqref{eq:sigma-ternary-def-1}--\eqref{eq:sigma-ternary-def-4}. The morphism $\widetilde{\sigma}$ is also expressed in terms of the same ternary operation on $\Lambda$. The point is that, compared to $\sigma$, the morphism $\widetilde{\sigma}$ has fairly easier expression: It can be computed by the following rule:
\begin{equation}
  \label{eq:attaching-rule}
  \widetilde{\sigma}(a \to b \to c) = a \to \langle a, b, c \rangle \to c
  \quad (a, b, c \in \Lambda).
\end{equation}

\begin{proof}[Proof of Theorem~\ref{thm:dyn-set-PH-DYBE}]
  Since $\mathsf{Q}$ is a strong monoidal functor, the morphism $\sigma$ is a dynamical Yang-Baxter map if and only if the morphism $\widetilde{\sigma}$ satisfies the braid relation. Now we set $\widetilde{\sigma}_{12} = \widetilde{\sigma} \times_{\Lambda} \id_{A}$ and $\widetilde{\sigma}_{23} = \id_A \times_{\Lambda} \widetilde{\sigma}$. By the rule~\eqref{eq:attaching-rule}, we compute, without any difficulty,
  \begin{align*}
    & \widetilde{\sigma}_{12} \widetilde{\sigma}_{23} \widetilde{\sigma}_{12} (a \to b \to c \to d) \\
    & \quad = \widetilde{\sigma}_{12} \widetilde{\sigma}_{23} (a \to \langle a, b, c \rangle \to c \to d) \\
    & \quad = \widetilde{\sigma}_{12} (a \to \langle a, b, c \rangle \to \langle \langle a, b, c \rangle, c, d \rangle \to d) \\
    & \quad = a \to \langle a, \langle a, b, c \rangle, \langle \langle a, b, c \rangle, c, d \rangle \rangle
      \to \langle \langle a, b, c \rangle, c, d \rangle \to d, \\
    & \widetilde{\sigma}_{23} \widetilde{\sigma}_{12} \widetilde{\sigma}_{23} (a \to b \to c \to d) \\
    & \quad = \widetilde{\sigma}_{23} \widetilde{\sigma}_{12} (a \to b \to \langle b, c, d \rangle \to d) \\
    & \quad = \widetilde{\sigma}_{23} (a \to \langle a, b, \langle b, c, d \rangle \rangle \to \langle b, c, d \rangle \to d) \\
    & \quad = a \to \langle a, b, \langle b, c, d \rangle \rangle
      \to \langle \langle a, b, \langle b, c, d \rangle \rangle, \langle b, c, d \rangle, d \rangle \to d
  \end{align*}
  for $a, b, c, d \in \Lambda$. Thus $\sigma$ is a dynamical Yang-Baxter map if and only if \eqref{eq:Shibu-ternary-1} and~\eqref{eq:Shibu-ternary-2} are satisfied for all $a, b, c, d \in \Lambda$. By Lemma~\ref{lem:equiv-ternary-X2}, every dynamical Yang-Baxter map on $X$ is obtained in this way from a ternary operation on $\Lambda$.
\end{proof}

\subsection{Proof of Theorem~\ref{thm:DYB-map-PH-equiv}}
\label{subsec:proof-thm-34}

We give a proof of Theorem~\ref{thm:DYB-map-PH-equiv}. Let $K = K_{\Lambda}$ be the terminal object considered in Lemma~\ref{lem:dyn-set-terminal}. For $X = K$, Theorem~\ref{thm:dyn-set-PH-DYBE} reduces to the following form: Given a ternary operation $\blankternary$ on $\Lambda$, we define
\begin{equation}
  \label{eq:def-sigma-0}
  \sigma_0(\lambda)(x, y) = (\langle \lambda, x, y \rangle, y)
  \quad (\lambda \in \Lambda, x, y \in K).
\end{equation}
Then $\sigma_0 = \{ \sigma_0(\lambda): K \times K \to K \times K \}_{\lambda \in \Lambda}$ is a dynamical Yang-Baxter map on $K$ if and only if the ternary operation $\blankternary$ satisfies the equations~\eqref{eq:Shibu-ternary-1} and~\eqref{eq:Shibu-ternary-2} of Theorem~\ref{thm:dyn-set-PH-DYBE}. Moreover, every dynamical Yang-Baxter map on $K$ is obtained in this way from such a ternary operation.

Now let $X$ be a dynamical set over $\Lambda$ of PH type. Since there is an isomorphism $\psi_X: X \to K$ in $\DSet{\Lambda}$, the morphism $\sigma_0: K \otimes K \to K \otimes K$ induces a morphism $X \otimes X \to X \otimes X$. We express the induced morphism in an explicit way:

\begin{lemma}
  Let $\blankternary$ be a ternary operation on $\Lambda$, and let $\sigma_0$ be the morphism defined by~\eqref{eq:def-sigma-0}. Then the morphism
  \begin{equation*}
    \sigma = (\psi_X^{-1} \otimes \psi_X^{-1}) \diamond \sigma_0 \diamond (\psi_X \otimes \psi_X): X \otimes X \to X \otimes X
  \end{equation*}
  coincides with the morphism defined by the formulas~\eqref{eq:sigma-ternary-def-1}--\eqref{eq:sigma-ternary-def-4} by using the same ternary operation on $\Lambda$.
\end{lemma}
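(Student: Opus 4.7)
The plan is to perform the computation of $\sigma(\lambda)(x,y)$ directly from the definition, moving through the three factors of the composition and tracking how the structure maps enter at each stage. The only real subtlety lies in handling the tensor product of morphisms correctly, since by equation~\eqref{eq:dyn-set-tensor-morph-2} the second factor is evaluated at a shifted point (namely $\lambda \dyact X^{(1)}$), and this shift behaves differently on $X$ and on $K = K_\Lambda$ because the structure map of $K$ is trivial.

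First, I would record the two building blocks explicitly. From~\eqref{eq:dyn-set-terminal-map} the isomorphism $\psi_X$ is given by $\psi_X(\lambda,x) = \lambda \dyact_X x$, and its inverse in $\DSet{\Lambda}$ is $\psi_X^{-1}(\lambda,\mu) = \lambda \backslash_X \mu$ by definition~\eqref{eq:left-div-dyn-set-PH} of the left division. Applying~\eqref{eq:dyn-set-tensor-morph} to $\psi_X \otimes \psi_X$, I would compute
\begin{equation*}
  (\psi_X \otimes \psi_X)(\lambda)(x,y)
  = \bigl(\lambda \dyact x,\ (\lambda \dyact x) \dyact y\bigr),
\end{equation*}
which I temporarily abbreviate as $(u,v)$. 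For the inverse, the key observation is that the structure map of $K$ is given by $\lambda \dyact_K a = a$, so applying~\eqref{eq:dyn-set-tensor-morph} on the target side gives
\begin{equation*}
  (\psi_X^{-1} \otimes \psi_X^{-1})(\lambda)(a,b)
  = \bigl(\lambda \backslash_X a,\ a \backslash_X b\bigr),
\end{equation*}
because the second factor is evaluated at $\lambda \dyact_K a = a$ rather than at $\lambda \dyact_X a$.

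Next I would chain these together. Starting from $(x,y)$, the morphism $(\psi_X \otimes \psi_X)(\lambda)$ yields $(u,v)$; then $\sigma_0(\lambda)$ sends this to $(\langle \lambda, u, v\rangle, v)$ by~\eqref{eq:def-sigma-0}; finally $(\psi_X^{-1} \otimes \psi_X^{-1})(\lambda)$ sends $(\langle \lambda, u, v\rangle, v)$ to
\begin{equation*}
  \bigl(\lambda \backslash_X \langle \lambda, u, v\rangle,\ \langle \lambda, u, v\rangle \backslash_X v\bigr).
\end{equation*}
Substituting back $u = \lambda \dyact x$ and $v = (\lambda \dyact x) \dyact y$ produces exactly the two expressions on the right-hand sides of~\eqref{eq:sigma-ternary-def-2} and~\eqref{eq:sigma-ternary-def-3}, so the composition agrees with $\sigma(\lambda)(x,y) = (x \dybml{\lambda} y, x \dybmr{\lambda} y)$ as defined in~\eqref{eq:sigma-ternary-def-1}.

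There is no real obstacle; the only point that needs a sentence of care is the use of the trivial structure map on $K$ when expanding $(\psi_X^{-1} \otimes \psi_X^{-1})$, which is what prevents the two sides from agreeing only "up to a shift" and makes the formula collapse to the clean expression above. Alternatively, one could bypass the hand computation entirely by invoking Lemma~\ref{lem:equiv-ternary-X2} together with the bijection~\eqref{eq:equiv-ternary-X2}: both $\sigma$ and the morphism of~\eqref{eq:sigma-ternary-def-1}--\eqref{eq:sigma-ternary-def-4} correspond to the same ternary operation $\blankternary$ on $\Lambda$ under the quiver-theoretical correspondence, and hence they must coincide. I would mention this conceptual proof as a remark after the direct computation.
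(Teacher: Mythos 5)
Your direct computation is correct, and it takes a genuinely different route from the paper. The paper explicitly declines to compute in $\DSet{\Lambda}$ and instead transports the whole statement to $\Quiv{\Lambda}$: it introduces the obvious isomorphism $\Psi\colon \mathsf{Q}(X)\times_{\Lambda}\mathsf{Q}(X)\to\mathsf{Q}(K)\times_{\Lambda}\mathsf{Q}(K)$, observes that $\widetilde{\sigma}$ and $\widetilde{\sigma}_0$ act by the same rule $a\to b\to c\mapsto a\to\langle a,b,c\rangle\to c$ (so the middle square of a three-square diagram commutes), identifies $\Psi$ with $\mathsf{Q}(\psi_X)\times_{\Lambda}\mathsf{Q}(\psi_X)$, and concludes by faithfulness of $\mathsf{Q}$. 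Your approach is the more elementary one the paper alludes to and skips, and you correctly isolate the one genuinely delicate point: when expanding $(\psi_X^{-1}\otimes\psi_X^{-1})$ via \eqref{eq:dyn-set-tensor-morph}, the shift in the second factor is governed by the structure map of the \emph{domain} $K$, which is trivial, so the second component is evaluated at $a$ rather than at some $\lambda\dyact(\cdot)$; with that in hand the chain $(x,y)\mapsto(u,v)\mapsto(\langle\lambda,u,v\rangle,v)\mapsto(\lambda\backslash_X\langle\lambda,u,v\rangle,\ \langle\lambda,u,v\rangle\backslash_X v)$ reproduces \eqref{eq:sigma-ternary-def-2}--\eqref{eq:sigma-ternary-def-3} exactly. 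What each approach buys: yours is shorter and self-contained; the paper's makes visible \emph{why} the formula must come out this way (conjugation by $\psi_X\otimes\psi_X$ corresponds on the quiver side to relabelling arrows, which leaves the associated ternary operation untouched) and reuses machinery needed elsewhere. One caveat on your closing remark: invoking Lemma~\ref{lem:equiv-ternary-X2} to conclude that both morphisms "correspond to the same ternary operation" is not free for the composite $(\psi_X^{-1}\otimes\psi_X^{-1})\diamond\sigma_0\diamond(\psi_X\otimes\psi_X)$ --- that this composite corresponds to $\langle-,-,-\rangle$ under \eqref{eq:equiv-ternary-X2} is precisely the content of the paper's commuting-diagram argument, so as a standalone alternative proof that remark would need the $\Psi$-computation supplied; as a remark appended to your complete direct computation it is harmless.
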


This lemma can be proved by the direct computation. To avoid technical computation in $\DSet{\Lambda}$, and to give a deeper understanding to the background algebraic materials, we rather prove this lemma from the viewpoint of the complete quivers corresponding to $X$ and $K$.

\begin{proof}
  Set $A = \mathsf{Q}(X)$ and $B = \mathsf{Q}(K)$. For $Q = A, B$, we denote by ``$\lambda \to \mu$ in $Q$'' the unique arrow of $Q$ starting from $\lambda$ ending at $\mu$. We extend this notation as in \eqref{eq:uniq-arr-notation} and have an obvious isomorphism
  \begin{equation*}
    \Psi: A \times_{\Lambda} A \to B \times_{\Lambda} B,
    \quad (\text{$\lambda \to \mu \to \nu$ in $A$})
    \mapsto (\text{$\lambda \to \mu \to \nu$ in $B$}).
  \end{equation*}
  We define $\widetilde{\sigma}: A \times_{\Lambda} A \to A \times_{\Lambda} A$ and $\widetilde{\sigma}_0: B \times_{\Lambda} B \to B \times_{\Lambda} B$ by
  \begin{equation*}
    \widetilde{\sigma} = (\mathsf{Q}^{(2)}_{X,X})^{-1} \circ \mathsf{Q}(\sigma) \circ \mathsf{Q}^{(2)}_{X,X}
    \quad \text{and} \quad
    \widetilde{\sigma}_0 = (\mathsf{Q}^{(2)}_{K,K})^{-1} \circ \mathsf{Q}(\sigma_0) \circ \mathsf{Q}^{(2)}_{K,K},
  \end{equation*}
  respectively. Now we consider the following diagram:
  \begin{equation*}
    \xymatrix@C=40pt{
      \mathsf{Q}(X \otimes X)
      \ar[d]_{\mathsf{Q}(\sigma)}
      \ar[r]^(.55){(\mathsf{Q}^{(2)}_{X,X})^{-1}}
      & A \times_{\Lambda} A
      \ar[d]_{\widetilde{\sigma}}
      \ar[r]^{\Psi}
      & B \times_{\Lambda} B
      \ar[d]^{\widetilde{\sigma}_0}
      \ar[r]^(.45){\mathsf{Q}^{(2)}_{K,K}}
      & \mathsf{Q}(K \otimes K)
      \ar[d]^{\mathsf{Q}(\sigma_0)} \\
      \mathsf{Q}(X \otimes X)
      \ar[r]^(.55){(\mathsf{Q}^{(2)}_{X,X})^{-1}}
      & A \times_{\Lambda} A
      \ar[r]^{\Psi}
      & B \times_{\Lambda} B
      \ar[r]^(.45){\mathsf{Q}^{(2)}_{K,K}}
      & \mathsf{Q}(K \otimes K)
    }
  \end{equation*}
  The left and the right squares commute. We use the rule~\eqref{eq:attaching-rule} to compute
  \begin{align*}
    \widetilde{\sigma}(\text{$\lambda \to \mu \to \nu$ in $A$})
    & = (\text{$\lambda \to \langle \lambda, \mu, \nu \rangle \to \nu$ in $A$}), \\
    \widetilde{\sigma}_0(\text{$\lambda \to \mu \to \nu$ in $B$})
    & = (\text{$\lambda \to \langle \lambda, \mu, \nu \rangle \to \nu$ in $B$})
  \end{align*}
  for $\lambda, \mu, \nu \in \Lambda$. Hence the middle square of the above diagram also commutes. Now we express $\Psi$ explicitly: For $\lambda, \mu \in \Lambda$ and $x, y \in X$ with $\mu = \lambda \dyact x$, we have
  \begin{align*}
    \Psi((\lambda, x), (\mu, y))
    & = \Psi(\text{$\lambda \to \mu \to (\mu \dyact y)$ in $A$}) \\
    & = (\text{$\lambda \to \mu \to (\mu \dyact y)$ in $B$}) \\
    & = ((\lambda, \mu), (\mu, \mu \dyact y)) \\
    & = ((\lambda, \lambda \dyact x), (\mu, \mu \dyact y)).
  \end{align*}
  Namely, $\Psi = \mathsf{Q}(\psi_X) \times_{\Lambda} \mathsf{Q}(\psi_X)$. By the functorial property of $\mathsf{Q}^{(2)}$,
  \begin{equation*}
    \mathsf{Q}^{(2)}_{K,K} \circ \Psi \circ (\mathsf{Q}^{(2)}_{X,X})^{-1}
    = \mathsf{Q}(\psi_X \otimes \psi_X) \circ \mathsf{Q}^{(2)}_{X,X} \circ (\mathsf{Q}^{(2)}_{X,X})^{-1}
    = \mathsf{Q}(\psi_X \otimes \psi_X).
  \end{equation*}
  Hence, by the above commutative diagram, we have
  \begin{equation*}
    \mathsf{Q}(\sigma) \circ \mathsf{Q}(\psi_X \otimes \psi_X)
    = \mathsf{Q}(\psi_X \otimes \psi_X) \circ \mathsf{Q}(\sigma_0).
  \end{equation*}
  Now the claim of this theorem follows from the faithfulness of $\mathsf{Q}$.
\end{proof}

\begin{proof}[Proof of Theorem~\ref{thm:DYB-map-PH-equiv}]
  The proof boils down to the case where $X_1 = X_2 = K$ by the above lemma. If $X_1 = X_2 = K$, the claim easily follows from the fact that there are no morphisms other than the identity morphism.
\end{proof}

\subsection{Classification of some subclasses of the solutions}

Let $X$ be a dynamical set over $\Lambda$ of PH type. Our method is effective to characterize several classes of dynamical Yang-Baxter maps on $X$. Let $\blankternary$ be a ternary operation on $\Lambda$ satisfying \eqref{eq:Shibu-ternary-1} and~\eqref{eq:Shibu-ternary-2}, and let $\sigma: X \otimes X \to X \otimes X$ be the dynamical Yang-Baxter map arising from the ternary operation $\blankternary$.

\begin{theorem}
  \label{thm:DYB-map-PH-subclass}
  {\rm (a)} $\sigma$ is unitary \textup{(}i.e., $\sigma \diamond \sigma$ is the identity\textup{)} if and only if
  \begin{equation*}
    \langle a, \langle a, b, c \rangle, c \rangle = b
    \quad (\forall a, b, c \in \Lambda).
  \end{equation*}
  {\rm (b)} $\sigma$ is idempotent \textup{(}i.e., $\sigma \diamond \sigma = \sigma$\textup{)} if and only if
  \begin{equation*}
    \langle a, \langle a, b, c \rangle, c \rangle
    = \langle a, b, c \rangle
    \quad (\forall a, b, c \in \Lambda).
  \end{equation*}
  {\rm (c)} $\sigma$ is invertible if and only if the following map is bijective for all $a, b \in \Lambda$\textup{:}
  \begin{equation*}
    \Lambda \to \Lambda, 
    \quad x \mapsto \langle a, x, c \rangle.
  \end{equation*}
\end{theorem}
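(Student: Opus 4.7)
The plan is to push the whole question through the embedding $\mathsf{Q}$ onto the quiver side, where $\sigma$ becomes the much simpler morphism $\widetilde{\sigma}: A \times_{\Lambda} A \to A \times_{\Lambda} A$ governed by the rule
\begin{equation*}
  \widetilde{\sigma}(a \to b \to c) = a \to \langle a, b, c \rangle \to c
  \quad (a, b, c \in \Lambda)
\end{equation*}
established in Subsection~\ref{subsec:proof-thm-33}. Because $\mathsf{Q}$ is a fully faithful strong monoidal functor (Theorem~\ref{thm:dyn-set-embedding}) and since $\widetilde{\sigma}$ is defined from $\sigma$ via conjugation by the monoidal constraint $\mathsf{Q}^{(2)}_{X,X}$, each of the properties considered (being $=\id$, idempotent, or an isomorphism) transfers between $\sigma$ and $\widetilde{\sigma}$. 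This reduces everything to a two-line computation on paths of length two.

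For (a) and (b), I would compute
\begin{equation*}
  \widetilde{\sigma}^{\,2}(a \to b \to c)
  = \widetilde{\sigma}(a \to \langle a, b, c \rangle \to c)
  = a \to \langle a, \langle a, b, c \rangle, c \rangle \to c.
\end{equation*}
Since $\widetilde{\sigma}$ preserves the outer vertices $a$ and $c$, the equation $\widetilde{\sigma}^{\,2}(a \to b \to c) = a \to b \to c$ holds for all $a, b, c$ iff $\langle a, \langle a, b, c \rangle, c \rangle = b$ for all $a, b, c \in \Lambda$, proving (a). Comparing instead with $\widetilde{\sigma}(a \to b \to c) = a \to \langle a, b, c\rangle \to c$, the same bookkeeping yields (b).

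For (c), observe that $\widetilde{\sigma}$ preserves the source of the first arrow and the target of the second, so the complete quiver $A \times_{\Lambda} A$ decomposes as the disjoint union of ``slices''
\begin{equation*}
  S_{a, c} := \{ a \to x \to c \mid x \in \Lambda \} \quad (a, c \in \Lambda),
\end{equation*}
each preserved by $\widetilde{\sigma}$ and in canonical bijection with $\Lambda$ via $x \leftrightarrow (a \to x \to c)$. On the slice $S_{a,c}$, $\widetilde{\sigma}$ acts as $x \mapsto \langle a, x, c \rangle$. Therefore $\widetilde{\sigma}$ is a bijection of the underlying set of $A \times_{\Lambda} A$ iff $x \mapsto \langle a, x, c \rangle$ is a bijection of $\Lambda$ for every pair $(a, c)$; by Lemma~\ref{lem:quiv-iso} this is in turn equivalent to $\widetilde{\sigma}$ being an isomorphism in $\Quiv{\Lambda}$, and hence to $\sigma$ being invertible in $\DSet{\Lambda}$ by fullness of $\mathsf{Q}$ and Lemma~\ref{lem:dyn-set-isom}.

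I do not anticipate any real obstacle: the hard work is already front-loaded into the attaching rule~\eqref{eq:attaching-rule} and the categorical setup. The only point requiring a moment of care is (c), where one must distinguish ``bijection of underlying sets'' from ``isomorphism in $\DSet{\Lambda}$'' and appeal to Lemmas~\ref{lem:quiv-iso} and~\ref{lem:dyn-set-isom}; note also that the universally quantified variables in the displayed map of (c) should be read as $(a,c) \in \Lambda \times \Lambda$ (the letter $b$ in the statement is a typographical slip, as $b$ does not occur in the formula).
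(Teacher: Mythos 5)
Your proposal is correct and follows essentially the same route as the paper: transfer the question to the quiver side via the fully faithful strong monoidal functor $\mathsf{Q}$, compute $\widetilde{\sigma}\widetilde{\sigma}(a \to b \to c) = a \to \langle a, \langle a, b, c\rangle, c\rangle \to c$ from the attaching rule~\eqref{eq:attaching-rule}, and compare; your slice decomposition for (c) just spells out the step the paper leaves implicit. Your remark that the quantifier in (c) should read $a, c \in \Lambda$ is also right --- it is a typographical slip in the statement.
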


Part (a) of this theorem is \cite[Proposition 7.1]{MR2389797}.

\begin{proof}
  Let $\widetilde{\sigma}: \mathsf{Q}(X) \times_{\Lambda} \mathsf{Q}(X) \to \mathsf{Q}(X) \times_{\Lambda} \mathsf{Q}(X)$ be the morphism in $\Quiv{\Lambda}$ corresponding to $\sigma$ via~\eqref{eq:equiv-X2-A2}. Then the dynamical Yang-Baxter map $\sigma$ is unitary if and only if $\widetilde{\sigma} \circ \widetilde{\sigma}$ is the identity. Part (a) follows from
  \begin{equation*}
    \widetilde{\sigma}\widetilde{\sigma}(a \to b \to c)
    = a \to \langle a, \langle a, b, c \rangle, c \rangle \to c
    \quad (a, b, c \in \Lambda).
  \end{equation*}
  Part (b) is proved by the same computation. Part (c) follows from the fact that $\sigma$ is invertible if and only if $\widetilde{\sigma}$ is.
\end{proof}

\subsection{Solutions of vertex type}

A dynamical Yang-Baxter map $(X, \sigma)$ is of {\em vertex type} \cite{MR2389797} if the map $\sigma(\lambda)$ does not depend on the parameter $\lambda \in \Lambda$. We note that the class of the dynamical Yang-Baxter maps of vertex type is {\em not} closed under the equivalence, as the following example shows:

\begin{example}[Shibukawa {\cite[Section 8]{MR2389797}}]
  \label{ex:Shibu-vertex-type}
  Let $*_i$ ($i = 1, 2$) be a binary operation on $\Lambda$ such that $(\Lambda, *_i)$ is a left quasigroup with left division $/_i$. We define the dynamical set $X_i$ of PH type to be the set $X_i = \Lambda$ equipped with the structure map $*_i: \Lambda \times X_i \to \Lambda$. Suppose that the first operation $*_1$ satisfies
  \begin{equation}
    \label{eq:ternary-1-condition}
    (a *_1 c) \backslash_1 ((a *_1 b) *_1 c)
    = (a' *_1 c) \backslash_1 ((a' *_1 b) *_1 c)
  \end{equation}
  for all $a, a', b, c \in \Lambda$ (this condition is satisfied if, for example, $\Lambda$ is a group with respect to $*_1$). We require nothing of $*_2$. The ternary operation
  \begin{equation*}
    \langle a, b, c \rangle_1 := a *_1 (b \backslash_1 c)
    \quad (a, b, c \in \Lambda)
  \end{equation*}
  satisfies~\eqref{eq:Shibu-ternary-1} and~\eqref{eq:Shibu-ternary-2}. Now we define $\sigma_i: X_i \otimes X_i \to X_i \otimes X_i$ ($i = 1, 2$) by the formulas~\eqref{eq:sigma-ternary-def-1}--\eqref{eq:sigma-ternary-def-4} with $X = X_i$, $\dyact = *_i$ and $\blankternary = \blankternary_1$. Theorem~\ref{thm:dyn-set-PH-DYBE} says that $(X_1, \sigma_1)$ and $(X_2, \sigma_2)$ are dynamical Yang-Baxter maps of PH type. The former is given by
  \begin{equation*}
    \sigma_1(\lambda)(x, y) = (y, (\lambda *_1 y) \backslash_1 ((\lambda *_1 x) *_1 y)
    \quad (\lambda \in \Lambda, x, y \in X_1),
  \end{equation*}
  which is a vertex type by virtue of~\eqref{eq:ternary-1-condition}. The latter is given by
  \begin{align*}
    \sigma_2(\lambda)(x, y)
    = (\lambda \backslash_2
    & \, (\lambda *_1 ((\lambda *_2 x) \backslash_1 ((\lambda *_2 x) *_2 y))), \\
    & \, (\lambda *_1 ((\lambda *_2 x) \backslash_1 ((\lambda *_2 x) *_2 y))) \backslash_2 ((\lambda *_2 x) *_2 y))
  \end{align*}
  for $\lambda \in \Lambda$ and $x, y \in X$, which is not a vertex type in general (see \cite{MR2389797} for a concrete example).
\end{example}

Following Shibukawa \cite{MR2389797},  we say that {\em a dynamical Yang-Baxter map $(X, \sigma)$ has a vertex-IRF correspondence} if it is equivalent to a dynamical Yang-Baxter map of vertex type. It is difficult to determine whether a given dynamical Yang-Baxter map has a vertex-IRF correspondence. For future study of this kind of problems, we give the following new examples of dynamical Yang-Baxter maps having a vertex-IRF correspondence.

\begin{example}
  \newcommand{\ope}{\mathbin{\sharp}}
  Let $\ope$ be a binary operation on $\Lambda$ satisfying
  \begin{equation}
    \label{eq:nazo}
    b \ope (c \ope d) = (b \ope c) \ope (c \ope d)
    \quad \text{and} \quad
    c \ope d = (c \ope d) \ope d
  \end{equation}
  for all $b, c, d \in \Lambda$ (examples of such a binary operation will be given below). Since the ternary operation $\langle a, b, c \rangle = b \ope c$ ($a, b, c \in \Lambda$) satisfies~\eqref{eq:Shibu-ternary-1} and~\eqref{eq:Shibu-ternary-2},
  \begin{equation*}
    \sigma_0: K \otimes K \to K \otimes K,
    \quad \sigma_0(\lambda)(x, y) = (x \ope y, y)
    \quad (\lambda \in \Lambda, x, y \in K)
  \end{equation*}
  is a dynamical Yang-Baxter map of vertex type on the dynamical set $K = K_{\Lambda}$ of Lemma~\ref{lem:dyn-set-terminal}. Now we suppose that we are given a binary operation $*$ on $\Lambda$ such that $(\Lambda, *)$ is a left quasigroup. Let $X = \Lambda$ be the dynamical set with the structure map given by $\dyact_X = *$. We have a dynamical Yang-Baxter map
  \begin{align*}
    \sigma(\lambda)(x, y) = (\lambda \backslash_X
    & ((\lambda * x) \ope ((\lambda * x) * y)), \\
    & ((\lambda * x) \ope ((\lambda * x) * y)) \backslash_X ((\lambda * x) * y))
      \quad (\lambda \in \Lambda, x, y \in X)
  \end{align*}
  on $X$, which is equivalent to $\sigma_0$ by Theorem~\ref{thm:DYB-map-PH-equiv}. Unlike Example~\ref{ex:Shibu-vertex-type}, $\sigma$ is always idempotent by Theorem~\ref{thm:DYB-map-PH-subclass}.

  The problem is to find a binary operation satisfying~\eqref{eq:nazo}. A {\em band}, introduced by Clifford \cite{MR0062119}, is a set $S$ endowed with an associative binary operation $\vee$ such that $a \vee a = a$ for all elements $a \in S$. It is easy to verify that the binary operation of a band satisfies \eqref{eq:nazo}. A familiar example of bands may be a {\em semilattice}, which is an abstraction of a partially ordered set having a least upper bound of any pair of its elements. Algebraically, a semilattice is just a band $(S, \vee)$ such that $a \vee b = b \vee a$ for all $a, b \in S$.

  For example, the set $\Lambda = \mathbb{R}$ of all real numbers is a semilattice by the binary operation $\vee$ defined by $a \vee b = \max\{ a, b \}$. We use this semilattice to claim that the above construction produces an idempotent dynamical Yang-Baxter map $\sigma = \{ \sigma(\lambda) \}_{\lambda \in \Lambda}$ depending on the parameter $\lambda \in \Lambda$. We make the set $X = \Lambda$ ($= \mathbb{R}$) a left quasigroup by the binary operation
  \begin{equation*}
    x * y
    := \text{(the middle point of $x$ and $y$)}
    = \frac{x + y}{2} \quad (x, y \in X).
  \end{equation*}
  To compute $\sigma(\lambda)$ with $\ope = \vee$, we shall note
  \begin{equation*}
    (\lambda * x) * y - \lambda * x
    = \frac{1}{2}y - \frac{1}{4}\lambda - \frac{1}{4}x
    = \frac{1}{2}(y - \lambda * x).
  \end{equation*}
  From this equation, we have
  \begin{equation*}
    ((\lambda * x) \vee ((\lambda * x) * y)) =
    \begin{cases}
      \frac{1}{4} \lambda + \frac{1}{4} x + \frac{1}{2} y
      & \text{if $y \ge \lambda * x$}, \\
      \frac{1}{2} \lambda + \frac{1}{2} x
      & \text{otherwise}.
    \end{cases}
  \end{equation*}
  The left division of $(X, *)$ is given by $a \backslash b = 2 b - a$ for $a, b \in X$. Thus,
  \begin{equation*}
    \sigma(\lambda)(x, y)
    = \left(- \frac{1}{2} \lambda + \frac{1}{2} x + y,
      \, \frac{1}{4} \lambda + \frac{1}{4} x + \frac{1}{2} y \right)
  \end{equation*}
  for $x, y \in X$ if $y \ge \lambda * x$, and $\sigma(\lambda)(x, y) = (x, y)$ otherwise.
\end{example}

\section{A new class of dynamical Yang-Baxter maps}
\label{sec:new-sol}

\subsection{A new class of dynamical Yang-Baxter maps}

In this section, we give a new class of dynamical Yang-Baxter maps by using the quiver-theoretical techniques developed in the above. Let $\Lambda$ be a non-empty set. We first prepare the dynamical set $X \in \DSet{\Lambda}$ underlying the dynamical Yang-Baxter map that we will construct. We fix a non-empty set $X_0$ and a family $\{ *_i \}_{i \in X_0}$ of binary operations on $\Lambda$ indexed by $X_0$ such that $(\Lambda, *_i)$ is a left quasigroup for all $i \in X_0$. We set $X = X_0 \times \Lambda$ and make it a dynamical set over $\Lambda$ by the structure map defined by
\begin{equation*}
  \dyact_X: \Lambda \times X \to \Lambda,
  \quad \lambda \dyact (i, \mu) = \lambda *_i \mu \quad (\lambda, \mu \in \Lambda, i \in X_0).
\end{equation*}
Our construction requires the following two data:
\begin{itemize}
\item A map $\sigma_0: X_0 \times X_0 \to X_0 \times X_0$.
\item A ternary operation $\blankternary$ on $\Lambda$.
\end{itemize}
For $i, j \in X_0$, we write
$\sigma_0(i, j) = (i \triactl j, i \triactr j)$. We denote by $\backslash_i$ the left division of the left quasigroup $(\Lambda, *_i)$. We now define $\sigma: X \otimes X \to X \otimes X$ in $\DSet{\Lambda}$ by
\begin{equation}
  \label{eq:new-sigma-def-1}
  \sigma(\lambda)((i, x), (j, y))
  = \Big( (i, x) \dybml{\lambda} (j, y), (i, x) \dybmr{\lambda} (j, y) \Big)
\end{equation}
for $(i, x), (j, y) \in X = X_0 \times \Lambda$, where
\begin{align}
  \label{eq:new-sigma-def-2}
  (i, x) \dybml{\lambda} (j, y)
  & = \Big( i \triactl j, \lambda
    \big\backslash_{i \triactl j}
    \langle \lambda, \lambda *_i x, (\lambda *_i x) *_j y \rangle \Big), \\
  \label{eq:new-sigma-def-3}
  (i, x) \dybmr{\lambda} (j, y)
  & = \Big( i \triactr j, \langle \lambda, \lambda *_i x, (\lambda *_i x) *_j y \rangle
    \big\backslash_{i \triactr j}
    ((\lambda *_i x) *_j y) \Big).
\end{align}
The main theorem of this section is:

\begin{theorem}
  \label{sec:sec-4-main-thm}
  The morphism $\sigma = \{ \sigma(\lambda) \}$ is a dynamical Yang-Baxter map on $X$ if and only if $\sigma_0$ is a Yang-Baxter map on $X_0$ and the ternary operation $\blankternary$ satisfies \eqref{eq:Shibu-ternary-1} and \eqref{eq:Shibu-ternary-2} of Theorem~\ref{thm:dyn-set-PH-DYBE}.
\end{theorem}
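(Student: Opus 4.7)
The plan is to reduce the statement to a computation on the associated quiver $A := \mathsf{Q}(X)$, using the fully faithful strong monoidal embedding $\mathsf{Q}\colon \DSet{\Lambda}\to \Quiv{\Lambda}$ from Theorem~\ref{thm:dyn-set-embedding} and the braiding translation given by \eqref{eq:def-sigma-tilde-1}. The point is that on the quiver side, the morphism $\widetilde{\sigma}$ acquires a remarkably clean form in which the $X_0$-coordinate and the $\Lambda$-coordinate evolve independently, and their evolutions are governed respectively by $\sigma_0$ and by $\blankternary$.

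First I would describe $A$ explicitly. Since each $(\Lambda, *_i)$ is a left quasigroup, for every pair $(\lambda,\nu)\in \Lambda\times\Lambda$ and every $i\in X_0$ there is exactly one arrow from $\lambda$ to $\nu$ with label $i$, namely $(\lambda,(i,\lambda\backslash_i\nu))$. Thus an arrow of $A$ may be written as a triple $\lambda \xrightarrow{i}\nu$, and a composable pair as $\lambda\xrightarrow{i}\mu\xrightarrow{j}\nu$. Rewriting \eqref{eq:new-sigma-def-1}--\eqref{eq:new-sigma-def-3} and using~\eqref{eq:def-sigma-tilde-2}, one finds the attaching rule
\begin{equation*}
  \widetilde{\sigma}\bigl(\lambda\xrightarrow{i}\mu\xrightarrow{j}\nu\bigr)
  \;=\;\lambda\xrightarrow{i\triactl j}\langle\lambda,\mu,\nu\rangle\xrightarrow{i\triactr j}\nu,
\end{equation*}
so $\widetilde{\sigma}$ modifies the label pair by $\sigma_0$ and the middle vertex by $\blankternary$ in a decoupled fashion, precisely analogous to the rule~\eqref{eq:attaching-rule} used in the proof of Theorem~\ref{thm:dyn-set-PH-DYBE}.

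Next I would apply this rule iteratively to a triple of composable arrows $\lambda\xrightarrow{i}\mu\xrightarrow{j}\nu\xrightarrow{k}\rho$ and compute both sides of the braid equation on $A\times_\Lambda A\times_\Lambda A$. Writing $\widetilde{\sigma}_{12}=\widetilde{\sigma}\times_{\Lambda}\id_A$ and $\widetilde{\sigma}_{23}=\id_A\times_{\Lambda}\widetilde{\sigma}$, both $\widetilde{\sigma}_{12}\widetilde{\sigma}_{23}\widetilde{\sigma}_{12}$ and $\widetilde{\sigma}_{23}\widetilde{\sigma}_{12}\widetilde{\sigma}_{23}$ can be read off by repeated application of the rule, exactly as in Section~\ref{subsec:proof-thm-33}. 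The resulting expressions split as a triple of labels in $X_0$ and a triple of intermediate vertices in $\Lambda$: equating the label triples yields the braid relation for $(X_0,\sigma_0)$, while equating the vertex triples yields the two identities \eqref{eq:Shibu-ternary-1} and \eqref{eq:Shibu-ternary-2}.

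Finally I would draw the conclusion in both directions. Since $\mathsf{Q}$ is a fully faithful strong monoidal functor, $\sigma$ satisfies the braid relation in $\DSet{\Lambda}$ if and only if $\widetilde{\sigma}$ does in $\Quiv{\Lambda}$; and the invariance condition is automatic because $\sigma$ is defined as a morphism of $\DSet{\Lambda}$. The ``if'' direction then follows immediately from the computation above. For the ``only if'' direction, one uses that the labels $i,j,k\in X_0$ and the endpoints $\lambda,\mu,\nu,\rho\in\Lambda$ can be chosen independently: fixing a constant path varies $(i,j,k)$ freely and isolates the braid relation for $\sigma_0$, while fixing a constant label isolates \eqref{eq:Shibu-ternary-1}--\eqref{eq:Shibu-ternary-2}. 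The main obstacle I anticipate is purely bookkeeping: writing the two nested triple applications of $\widetilde{\sigma}$ transparently enough that the independence of the label and vertex components, and hence the clean separation of the two conditions, is manifest rather than buried in notation.
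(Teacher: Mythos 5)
Your proposal is correct and follows essentially the same route as the paper: pass to the quiver side via $\mathsf{Q}$, identify an arrow $(\lambda,(i,x))$ of $\mathsf{Q}(X)$ with a labelled arrow $\lambda\xrightarrow{i}\lambda *_i x$ (which is exactly the paper's isomorphism $\psi\colon X\to K$ in disguise), verify that $\widetilde{\sigma}$ is given by the attaching rule \eqref{eq:yabakunai}, and observe that the braid relation then decouples into the Yang--Baxter equation for $\sigma_0$ on the labels and the identities \eqref{eq:Shibu-ternary-1}--\eqref{eq:Shibu-ternary-2} on the vertices. The only cosmetic difference is that the paper packages the identification as an explicit isomorphism $\psi$ and a commutative diagram, while you perform it directly.
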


Thus, in a sense, one can obtain a dynamical Yang-Baxter map on $X$ by composing a dynamical Yang-Baxter map on $X_0$ and a dynamical Yang-Baxter map arising from a ternary operation on $\Lambda$.

Theorem~\ref{thm:dyn-set-PH-DYBE} is the case where $X_0$ is a singleton. Theorem~\ref{thm:DYB-map-PH-subclass} classifies some classes of dynamical Yang-Baxter maps on a dynamical set of PH type in terms of the associated ternary operator. A similar result holds for the dynamical Yang-Baxter maps constructed by the above theorem.

\begin{theorem}
  \label{sec:sec-4-main-thm-suppl}
  Suppose that $\sigma_0$ is a Yang-Baxter map on $X_0$ and the ternary operation $\blankternary$ on $\Lambda$ satisfies \eqref{eq:Shibu-ternary-1} and \eqref{eq:Shibu-ternary-2} of Theorem~\ref{thm:dyn-set-PH-DYBE} so that the morphism $\sigma$ defined by~\eqref{eq:new-sigma-def-1}--\eqref{eq:new-sigma-def-3} is a dynamical Yang-Baxter map. Then there holds:
  \begin{itemize}
  \item [(a)] $\sigma$ is unitary if and only if $\sigma_0$ is unitary and
    \begin{equation*}
      \langle a, \langle a, b, c \rangle, c \rangle = b
      \quad (\forall a, b, c \in \Lambda).
    \end{equation*}
  \item [(b)] $\sigma$ is idempotent if and only if $\sigma_0$ is idempotent and
    \begin{equation*}
      \langle a, \langle a, b, c \rangle, c \rangle
      = \langle a, b, c \rangle
      \quad (\forall a, b, c \in \Lambda).
    \end{equation*}
  \item [(c)] $\sigma$ is invertible if and only if $\sigma_0$ is invertible and the map
    \begin{equation*}
      \Lambda \to \Lambda, 
      \quad x \mapsto \langle a, x, c \rangle
    \end{equation*}
    is bijective for all $a, b \in \Lambda$.
  \end{itemize}
\end{theorem}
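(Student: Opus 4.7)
The plan is to mirror the proof of Theorem~\ref{thm:DYB-map-PH-subclass} by passing to the associated quiver morphism. Set $A := \mathsf{Q}(X)$, and let $\widetilde{\sigma} : A \times_{\Lambda} A \to A \times_{\Lambda} A$ be the morphism in $\Quiv{\Lambda}$ corresponding to $\sigma$ via~\eqref{eq:equiv-X2-A2}. Because $\mathsf{Q}$ is a fully faithful strong monoidal functor, $\sigma$ is unitary (resp.\ idempotent) if and only if $\widetilde{\sigma}$ is unitary (resp.\ idempotent); and invertibility of $\sigma$ in $\DSet{\Lambda}$ translates, via Lemma~\ref{lem:dyn-set-isom} and Lemma~\ref{lem:quiv-iso}, into $\widetilde{\sigma}$ being a bijection between the underlying sets. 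Thus the whole theorem reduces to questions about $\widetilde{\sigma}$.

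The next step is to describe $A$ combinatorially. Because each $(\Lambda, *_i)$ is a left quasigroup, for every label $i \in X_0$ and every ordered pair $(\lambda, \nu) \in \Lambda \times \Lambda$ there is a unique arrow of $A$ labelled by $i$ with source $\lambda$ and target $\nu$; write it as $\lambda \xrightarrow{i} \nu$. Using the obvious path notation $\lambda_1 \xrightarrow{i_1} \lambda_2 \xrightarrow{i_2} \cdots \xrightarrow{i_m} \lambda_{m+1}$ for elements of iterated fiber products of $A$, a direct unpacking of \eqref{eq:new-sigma-def-1}--\eqref{eq:new-sigma-def-3} (using only the defining identity $\mu *_i (\mu \backslash_i \nu) = \nu$) produces the clean attaching rule
\begin{equation*}
  \widetilde{\sigma}\bigl( \lambda \xrightarrow{i} \mu \xrightarrow{j} \nu \bigr)
  \; = \; \lambda \xrightarrow{i \triactl j} \langle \lambda, \mu, \nu \rangle \xrightarrow{i \triactr j} \nu,
\end{equation*}
a labelled enhancement of~\eqref{eq:attaching-rule}. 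The essential feature is that the label part $(i,j) \mapsto \sigma_0(i,j) = (i \triactl j, i \triactr j)$ and the vertex part $\mu \mapsto \langle \lambda, \mu, \nu \rangle$ evolve independently of one another.

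Iterating the rule immediately gives
\begin{equation*}
  \widetilde{\sigma}^2 \bigl( \lambda \xrightarrow{i} \mu \xrightarrow{j} \nu \bigr)
  \; = \; \lambda \xrightarrow{(i \triactl j)\triactl (i \triactr j)} \langle \lambda, \langle \lambda, \mu, \nu \rangle, \nu \rangle \xrightarrow{(i \triactl j)\triactr (i \triactr j)} \nu.
\end{equation*}
Since an arrow of $A$ is determined by its source, target, and label, parts~(a) and~(b) follow at once by equating this output with $\lambda \xrightarrow{i} \mu \xrightarrow{j} \nu$ and with $\widetilde{\sigma}(\lambda \xrightarrow{i} \mu \xrightarrow{j} \nu)$ respectively: the label equation decouples into the corresponding condition on $\sigma_0$ (as $(i,j)$ ranges over $X_0 \times X_0$), and the vertex equation decouples into the condition on $\langle-,-,-\rangle$ that already appeared in Theorem~\ref{thm:DYB-map-PH-subclass}. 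For part~(c), inverting $\widetilde{\sigma}$ on a given output $\lambda \xrightarrow{k} \nu'' \xrightarrow{l} \nu$ amounts to recovering $(i,j)$ from $\sigma_0(i,j) = (k,l)$ and recovering $\mu$ from $\langle \lambda, \mu, \nu \rangle = \nu''$; hence $\widetilde{\sigma}$ is a bijection if and only if $\sigma_0$ is a bijection on $X_0 \times X_0$ and $x \mapsto \langle \lambda, x, \nu \rangle$ is a bijection of $\Lambda$ for every $\lambda, \nu \in \Lambda$.

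The main obstacle is not computational but conceptual: one must verify that the labelled-arrow notation $\lambda \xrightarrow{i} \nu$ really does provide a well-defined parametrization of the arrows of $A$ by triples (source, target, label) --- this is exactly where the left quasigroup hypothesis on each $*_i$ enters --- so that the label and vertex components of the output of $\widetilde{\sigma}^2$ can be equated independently. Once this is established, the three parts fall out as a transparent combination of the analogous statement for $\sigma_0$ in the category of sets and Theorem~\ref{thm:DYB-map-PH-subclass} for the pure PH-type factor determined by $\langle-,-,-\rangle$.
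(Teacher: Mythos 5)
Your proposal is correct and follows essentially the same route as the paper: pass to the quiver side, where $\widetilde{\sigma}$ acts by the labelled attaching rule \eqref{eq:yabakunai}, iterate it, and decouple the label component (giving the condition on $\sigma_0$) from the vertex component (giving the condition on $\blankternary$, as in Theorem~\ref{thm:DYB-map-PH-subclass}). The paper's proof is exactly this reduction --- Theorem~\ref{thm:DYB-map-PH-subclass} with \eqref{eq:yabakunai} in place of \eqref{eq:attaching-rule} --- the only cosmetic difference being that the paper encodes your labelled-arrow parametrization of $\mathsf{Q}(X)$ via the isomorphism $\psi: X \to K$ of Lemma~\ref{lem-new-sigma-lem-1} rather than describing it directly.
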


\subsection{Proof of Theorems~\ref{sec:sec-4-main-thm} and~\ref{sec:sec-4-main-thm-suppl}}

We keep the notation as above. To prove Theorem~\ref{sec:sec-4-main-thm}, we consider the quiver $A \in \Quiv{\Lambda}$ defined by
\begin{equation*}
  A = \Lambda \times X_0 \times \Lambda,
  \quad \src_A(\lambda, i, \mu) = \lambda
  \quad \text{and} \quad \tgt_A(\lambda, i, \mu) = \mu.
\end{equation*}
We define $K \in \DSet{\Lambda}$ to be the set $K = X_0 \times \Lambda$ with the structure map given by $\lambda \dyact_K (i, \mu) = \mu$. The quiver $A$ is naturally identified with $\mathsf{Q}(K)$. The dynamical set $X$ introduced in the above is isomorphic to $K$ ({\it cf}. Lemma~\ref{lem:dyn-set-PH-unique}). More precisely, there holds:

\begin{lemma}
  \label{lem-new-sigma-lem-1}
  For $\lambda \in \Lambda$, we define the map $\psi(\lambda): X \to K$ by
  \begin{equation*}
    \psi(\lambda)(i, \lambda') = (i, \lambda *_i \lambda')
    \quad (\lambda, \lambda' \in \Lambda, i \in X_0).
  \end{equation*}
  Then $\psi = \{ \psi(\lambda) \}_{\lambda \in \Lambda}$ is an isomorphism $\psi: X \to K$ in $\DSet{\Lambda}$ with inverse
  \begin{equation*}
    \psi^{-1}: K \to X,
    \quad \psi^{-1}(\lambda)(i, \lambda') = (i, \lambda \backslash_i \lambda')
    \quad (\lambda, \lambda' \in \Lambda, i \in X_0).
  \end{equation*}  
\end{lemma}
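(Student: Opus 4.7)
The plan is to verify three things in turn: (a) that the family $\psi = \{\psi(\lambda)\}_{\lambda \in \Lambda}$ satisfies the invariance condition \eqref{eq:dyn-set-morph-def} and is therefore a morphism $X \to K$ in $\DSet{\Lambda}$; (b) that each component $\psi(\lambda)$ is bijective, so that $\psi$ is an isomorphism by Lemma~\ref{lem:dyn-set-isom}; and (c) that the explicit formula in the statement provides a two-sided inverse.

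For (a), the invariance condition demands $\lambda \dyact_K \psi(\lambda)(i, \lambda') = \lambda \dyact_X (i, \lambda')$. Using the defining formulas $\lambda \dyact_K (i, \mu) = \mu$ and $\lambda \dyact_X (i, \mu) = \lambda *_i \mu$, both sides collapse immediately to $\lambda *_i \lambda'$, so $\psi$ is a morphism in $\DSet{\Lambda}$.

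For (b), the map $\psi(\lambda): X_0 \times \Lambda \to X_0 \times \Lambda$ fixes the first coordinate $i$, so its bijectivity reduces to bijectivity of $\Lambda \to \Lambda$, $\lambda' \mapsto \lambda *_i \lambda'$, for each $i \in X_0$. This is exactly the defining property of the left quasigroup $(\Lambda, *_i)$. Lemma~\ref{lem:dyn-set-isom} then guarantees an inverse morphism $\psi^{-1}$ in $\DSet{\Lambda}$ whose component $\psi^{-1}(\lambda)$ is the set-theoretic inverse of $\psi(\lambda)$.

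For (c), the defining identities $a \backslash_i (a *_i x) = x$ and $a *_i (a \backslash_i b) = b$ of the left division of $(\Lambda, *_i)$ make it immediate that $(i, \lambda') \mapsto (i, \lambda \backslash_i \lambda')$ is two-sided inverse to $\psi(\lambda)$, matching the formula in the statement. There is no serious obstacle here; the content of the lemma is purely a convenient change of coordinates identifying the ``twisted'' dynamical set $X$ with the ``flat'' model $K$, so that subsequent computations involving $\sigma$ can be transported to the quiver $\mathsf{Q}(K)$ and handled by the complete-quiver techniques developed in Section~\ref{sec:sol-lqg}.
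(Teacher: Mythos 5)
Your proof is correct, and it is exactly the routine verification the paper has in mind (the paper omits the proof of this lemma as immediate): check the invariance condition \eqref{eq:dyn-set-morph-def}, note that each $\psi(\lambda)$ preserves the $X_0$-coordinate and is bijective on the $\Lambda$-coordinate by the left quasigroup axiom, and invoke Lemma~\ref{lem:dyn-set-isom} together with the left-division identities to identify the componentwise inverse. No gaps.
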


This result implies that the dynamical Yang-Baxter maps on $X$ are in one-to-one correspondence to the set of morphisms $\widetilde{\sigma}: A \times_{\Lambda} A \to A \times_{\Lambda} A$ in $\Quiv{\Lambda}$ such that $(A, \widetilde{\sigma})$ is a braided quiver. Now let $\widetilde{\sigma}: A \times_{\Lambda} A \to A \times_{\Lambda} A$ be a morphism in $\Quiv{\Lambda}$. For simplicity, we write an arrow $(\lambda, i, \mu) \in A$ as $\lambda \xrightarrow{\ i \ } \mu$ (the arrows are ``colored'' by $X_0$ unlike the setting of the previous section). We extend this notation as in~\eqref{eq:uniq-arr-notation}. Then, since
\begin{equation*}
  A \times_{\Lambda} A = \{ \lambda \xrightarrow{\ i \ } \mu \xrightarrow{\ j \ } \nu \mid \lambda, \mu, \nu \in \Lambda, i, j \in X_0 \},
\end{equation*}
one can express the morphism $\widetilde{\sigma}$ as
\begin{equation}
  \label{eq:yabai-map}
  \widetilde{\sigma}(\lambda \xrightarrow{\ i \ } \mu \xrightarrow{\ j \ } \nu)
  = \lambda \xrightarrow{\ t_1(\lambda, i, \mu, j, \nu) \ }
  t_2(\lambda, i, \mu, j, \nu)
  \xrightarrow{\ t_3(\lambda, i, \mu, j, \nu) \ } \nu
\end{equation}
by using a triple $(t_1: M \to X_0, t_2: M \to \Lambda, t_3: M \to X_0)$ of maps from $M$, where $M = \Lambda \times X_0 \times \Lambda \times X_0 \times \Lambda$. In other words:

\begin{lemma}[{\it cf}. Lemma~\ref{lem:mor-ternary}]
  \label{lem-new-sigma-lem-2}
  There is a bijection
  \begin{equation*}
    \Quiv{\Lambda}(A \times_{\Lambda} A, A \times_{\Lambda} A)
    \cong \Map(M, X_0) \times \Map(M, \Lambda) \times \Map(M, X_0).
  \end{equation*}
\end{lemma}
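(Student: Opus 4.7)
The plan is to mimic the proof of Lemma~\ref{lem:mor-ternary}, adapting it to the present setting in which arrows are ``colored'' by $X_0$. The first step is to unpack the fiber product $A \times_\Lambda A$: since an arrow of $A$ is a triple $(\lambda, i, \mu) \in \Lambda \times X_0 \times \Lambda$ with $\src = \lambda$ and $\tgt = \mu$, the composable pairs are precisely those sharing the appropriate endpoint. Hence the assignment $(\lambda \xrightarrow{\ i \ } \mu \xrightarrow{\ j \ } \nu) \mapsto (\lambda, i, \mu, j, \nu)$ identifies $A \times_\Lambda A$ with $M$ as a set, with source and target maps $\src(\lambda \xrightarrow{\ i \ } \mu \xrightarrow{\ j \ } \nu) = \lambda$ and $\tgt(\lambda \xrightarrow{\ i \ } \mu \xrightarrow{\ j \ } \nu) = \nu$.

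Next, I would observe that a morphism $\widetilde{\sigma}: A \times_\Lambda A \to A \times_\Lambda A$ in $\Quiv{\Lambda}$ must send each composable pair to another one with the same outer endpoints. Every element of $A \times_\Lambda A$ with source $\lambda$ and target $\nu$ has the form $\lambda \xrightarrow{\ \alpha \ } \beta \xrightarrow{\ \gamma \ } \nu$ for a unique $(\alpha, \beta, \gamma) \in X_0 \times \Lambda \times X_0$. Therefore the image of $\widetilde{\sigma}$ on the element indexed by $(\lambda, i, \mu, j, \nu) \in M$ is determined by, and determines, three values $t_1 \in X_0$, $t_2 \in \Lambda$, $t_3 \in X_0$, giving rise to maps $t_1, t_3: M \to X_0$ and $t_2: M \to \Lambda$ exactly as in~\eqref{eq:yabai-map}.

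The inverse of the assignment $\widetilde{\sigma} \mapsto (t_1, t_2, t_3)$ is then immediate: given any triple of maps, the formula~\eqref{eq:yabai-map} defines a set map $A \times_\Lambda A \to A \times_\Lambda A$ which by construction preserves the source $\lambda$ and target $\nu$ and is therefore a morphism in $\Quiv{\Lambda}$, and the two assignments are manifestly mutually inverse. There is essentially no obstacle here; the only point to be careful about is that $t_2$ is free to be any element of $\Lambda$ with no further constraint, since composability of the resulting pair $\lambda \xrightarrow{\ t_1 \ } t_2 \xrightarrow{\ t_3 \ } \nu$ is built into the notation (the target of the first arrow and the source of the second are both forced to equal $t_2$).
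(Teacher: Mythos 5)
Your argument is correct and is essentially the paper's own: the paper likewise identifies $A \times_{\Lambda} A$ with $M$ via $(\lambda \xrightarrow{\ i\ } \mu \xrightarrow{\ j\ } \nu) \mapsto (\lambda, i, \mu, j, \nu)$ and observes that preservation of source and target forces a morphism to have the form \eqref{eq:yabai-map}, with the triple $(t_1, t_2, t_3)$ unconstrained. Nothing is missing.
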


One can therefore, in principle, write down the necessary and sufficient condition for $\widetilde{\sigma}$ to satisfy the braid relation in terms of the triple $(t_1, t_2, t_3)$. The result would be inconvenient and far from understandable. For this reason, we make an ansatz that the maps $t_1$, $t_2$ and $t_3$ are expressed by the map $\sigma_0: X_0 \times X_0 \to X_0 \times X_0$ and the ternary operation $\blankternary$ on $\Lambda$ in the following way:
\begin{equation*}
  (t_1(\lambda, i, \mu, j, \nu), t_3(\lambda, i, \mu, j, \nu)) = \sigma_0(i, j),
  \quad t_2(\lambda, i, \mu, j, \nu) = \langle \lambda, \mu, \nu \rangle.
\end{equation*}
Then the formula \eqref{eq:yabai-map} reduces to the following form:
\begin{equation}
  \label{eq:yabakunai}
  \widetilde{\sigma}(\lambda \xrightarrow{\ i \ } \mu \xrightarrow{\ j \ } \nu)
  = \lambda \xrightarrow{\ i \triactl j \ }
  \langle \lambda, \mu, \nu \rangle
  \xrightarrow{\ i \triactr j \ } \nu.
\end{equation}
The morphisms $\widetilde{\sigma}_{12} \widetilde{\sigma}_{23} \widetilde{\sigma}_{12}$ and $\widetilde{\sigma}_{23} \widetilde{\sigma}_{12} \widetilde{\sigma}_{23}$ can be computed in a similar way as the proof of Theorem~\ref{thm:dyn-set-PH-DYBE}; see Subsection \ref{subsec:proof-thm-33}. As a consequence, we have:

\begin{lemma}
  \label{lem-new-sigma-lem-3}
  Suppose that $\widetilde{\sigma}$ is given by~\eqref{eq:yabakunai}. Then $(A, \widetilde{\sigma})$ is a braided quiver if and only if $\sigma_0$ is a Yang-Baxter map and the ternary operation $\blankternary$ satisfies \eqref{eq:Shibu-ternary-1} and \eqref{eq:Shibu-ternary-2} of Theorem~\ref{thm:dyn-set-PH-DYBE}.
\end{lemma}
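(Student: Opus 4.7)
The plan is to compute both sides of the braid equation for $\widetilde{\sigma}$ applied to a generic element
\[
  \lambda \xrightarrow{\ i \ } \mu \xrightarrow{\ j \ } \nu \xrightarrow{\ k \ } \rho
\]
of $A \times_\Lambda A \times_\Lambda A$ by repeatedly applying the rule~\eqref{eq:yabakunai}, and then to observe that the resulting equality decouples into two independent conditions: one on the $X_0$-colors (which will be the set-theoretical Yang--Baxter equation for $\sigma_0$) and one on the intermediate $\Lambda$-vertices (which will be the Shibukawa ternary identities~\eqref{eq:Shibu-ternary-1} and~\eqref{eq:Shibu-ternary-2}).

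First I would compute $\widetilde{\sigma}_{12}\widetilde{\sigma}_{23}\widetilde{\sigma}_{12}$ in three steps exactly as in the proof of Theorem~\ref{thm:dyn-set-PH-DYBE} in Subsection~\ref{subsec:proof-thm-33}, keeping track simultaneously of the color evolution (which depends only on $(i,j,k)$ through $\sigma_0$, since the colors in~\eqref{eq:yabakunai} do not involve the $\Lambda$-variables) and of the vertex evolution (which depends only on $(\lambda,\mu,\nu,\rho)$ through $\blankternary$). The output will be an arrow of the form
\[
  \lambda \xrightarrow{\ i_1 \ } v_1 \xrightarrow{\ i_2 \ } v_2 \xrightarrow{\ i_3 \ } \rho,
\]
where $(i_1,i_2,i_3)$ is precisely the triple obtained from $(i,j,k)$ by applying $(\sigma_0 \times \id)(\id \times \sigma_0)(\sigma_0 \times \id)$, and $(v_1,v_2)$ is the pair whose components are the two iterated ternary expressions appearing on the left-hand sides of~\eqref{eq:Shibu-ternary-1} and~\eqref{eq:Shibu-ternary-2} with $(a,b,c,d)=(\lambda,\mu,\nu,\rho)$. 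The analogous computation for $\widetilde{\sigma}_{23}\widetilde{\sigma}_{12}\widetilde{\sigma}_{23}$ produces the same shape of arrow, but with color triple $(\id \times \sigma_0)(\sigma_0 \times \id)(\id \times \sigma_0)$ applied to $(i,j,k)$ and with vertex pair given by the right-hand sides of~\eqref{eq:Shibu-ternary-1} and~\eqref{eq:Shibu-ternary-2}.

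Since the underlying set of $A\times_\Lambda A\times_\Lambda A$ is, by the notation~\eqref{eq:uniq-arr-notation}, in bijection with $\Lambda\times X_0\times\Lambda\times X_0\times\Lambda\times X_0\times\Lambda$, the two outputs are equal as elements of this set if and only if their color triples agree \emph{and} their middle $\Lambda$-vertices agree. Because the colors depend only on $(i,j,k)$ and not on $(\lambda,\mu,\nu,\rho)$, while the vertices depend only on $(\lambda,\mu,\nu,\rho)$ and not on $(i,j,k)$, the braid equation for $\widetilde{\sigma}$ is equivalent to the simultaneous conjunction of: (i) the set-theoretical Yang--Baxter equation for $\sigma_0$ on $X_0$, and (ii) the pair of ternary identities~\eqref{eq:Shibu-ternary-1} and~\eqref{eq:Shibu-ternary-2} for $\blankternary$ on $\Lambda$. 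This is exactly the assertion of the lemma.

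The only delicate point is the independence argument in the last paragraph: one must be sure that varying $(i,j,k)$ while fixing $(\lambda,\mu,\nu,\rho)$ (and vice versa) really does let us separate the two conditions. This is immediate once the two outputs are written out explicitly, because $X_0$ and $\Lambda$ are both non-empty and the triple $(i,j,k) \in X_0^3$ and the quadruple $(\lambda,\mu,\nu,\rho) \in \Lambda^4$ are free parameters; thus the expected obstacle is purely bookkeeping of indices in the three-step calculation, with no genuine algebraic subtlety beyond what already appears in the proof of Theorem~\ref{thm:dyn-set-PH-DYBE}.
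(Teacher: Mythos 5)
Your proposal is correct and follows essentially the same route as the paper, which at this point only remarks that $\widetilde{\sigma}_{12}\widetilde{\sigma}_{23}\widetilde{\sigma}_{12}$ and $\widetilde{\sigma}_{23}\widetilde{\sigma}_{12}\widetilde{\sigma}_{23}$ are computed as in the proof of Theorem~\ref{thm:dyn-set-PH-DYBE} and states the lemma as a consequence. Your explicit decoupling argument --- that the colors evolve only through $\sigma_0$, the vertices only through $\blankternary$, and that non-emptiness of $X_0$ and $\Lambda$ lets the universally quantified conjunction split into the two separate conditions --- is exactly the point the paper leaves implicit, and you have handled it correctly.
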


Now we give a proof of Theorem~\ref{sec:sec-4-main-thm}.

\begin{proof}[Proof of Theorem~\ref{sec:sec-4-main-thm}]
  Let $\sigma: X \otimes X \to X \otimes X$ be the morphism in $\DSet{\Lambda}$ defined by~\eqref{eq:new-sigma-def-1}--\eqref{eq:new-sigma-def-3} from the map $\sigma_0: X_0 \times X_0 \to X_0 \times X_0$ and the ternary operation $\blankternary$ on $\Lambda$. We consider the diagram
  \begin{equation*}
    \xymatrix@C=64pt{
      \mathsf{Q}(X \otimes X)
      \ar[d]_{\mathsf{Q}(\sigma)}
      \ar[r]^(.45){(\mathsf{Q}^{(2)}_{X,X})^{-1}}
      & \mathsf{Q}(X) \times_{\Lambda} \mathsf{Q}(X)
      \ar[r]^(.47){\mathsf{Q}(\psi) \times_{\Lambda} \mathsf{Q}(\psi)\mathstrut}
      & \mathsf{Q}(K) \times_{\Lambda} \mathsf{Q}(K)\phantom{,}
      \ar[d]^{\widetilde{\sigma}} \\
      \mathsf{Q}(X \otimes X)
      \ar[r]^(.45){(\mathsf{Q}^{(2)}_{X,X})^{-1}}
      & \mathsf{Q}(X) \times_{\Lambda} \mathsf{Q}(X)
      \ar[r]^(.47){\mathsf{Q}(\psi) \times_{\Lambda} \mathsf{Q}(\psi)\mathstrut}
      & \mathsf{Q}(K) \times_{\Lambda} \mathsf{Q}(K),
    }
  \end{equation*}
  where $\psi: X \to K$ is the isomorphism given by Lemma~\ref{lem-new-sigma-lem-1} and $\widetilde{\sigma}$ is the morphism defined by~\eqref{eq:yabakunai}. By Lemma~\ref{lem-new-sigma-lem-3}, it is sufficient to show that this diagram is commutative to prove this theorem. For simplicity, we set
  \begin{equation*}
    \Xi = (\mathsf{Q}(\psi) \times_{\Lambda} \mathsf{Q}(\psi)) \circ (\mathsf{Q}^{(2)}_{X,X})^{-1}.
  \end{equation*}
  For all $\lambda, x, y \in \Lambda$ and $i, j \in X_0$, we compute
  \begin{align*}
    \Xi(\lambda, (i, x), (j, y))
    & = ((\mathsf{Q}(\psi) \times_{\Lambda} \mathsf{Q}(\psi)) ((\lambda, i, x)), (\lambda *_i x, j, y))) \\
    & = ((\lambda, i, \lambda *_i x), (\lambda *_i x, j, (\lambda *_i x) *_j y)) \\
    & = \lambda \xrightarrow{\ i \ } \lambda *_i x \xrightarrow{\ j \ } (\lambda *_i x) *_j y.
  \end{align*}
  By the definition of $\sigma$ and $\widetilde{\sigma}$, we have
  \begin{align*}
    \Xi\mathsf{Q}(\sigma)(\lambda, (i, x), (j, y))
    & = \Xi (\lambda, (i,x) \dybml{\lambda} (j,y), (i,x) \dybmr{\lambda} (j,y)) \\
    & = \lambda \xrightarrow{\ i \triactl j \ }
      \langle \lambda, \lambda *_i x, (\lambda *_i x) *_j y \rangle
      \xrightarrow{\ i \triactr j \ } (\lambda *_i x) *_j y \\
    & = (\widetilde{\sigma} \circ \xi) (\lambda, (i, x), (j, y)).
  \end{align*}
  The proof is done.
\end{proof}

\begin{proof}[Proof of Theorem~\ref{sec:sec-4-main-thm-suppl}]
  This theorem is proved by the same way as Theorem~\ref{thm:DYB-map-PH-subclass} but by using \eqref{eq:yabakunai} instead of \eqref{eq:attaching-rule}.
\end{proof}

\section{Weak bialgebras arising from a solution}
\label{sec:two-weak-bialg}

\newcommand{\E}[2]{\mathop{%
    \renewcommand{\arraystretch}{.85}%
    \setlength{\arraycolsep}{1pt}%
    \mathbf{e} \! \left[
      \begin{array}{c} {\mathstrut #1} \\ {\mathstrut #2} \end{array}
    \right]}}
\newcommand{\facewt}[5]{\mathop{%
    \setlength{\arraycolsep}{1pt}%
    \renewcommand{\arraystretch}{.75}%
    {#1} \! \left[
      \begin{array}{ccc} & {\mathstrut #2} \\ {\mathstrut #4} & & {\mathstrut #3} \\ & {\mathstrut #5} \end{array}
    \right]}}
\newcommand{\W}[4]{\facewt{\mathbf{w}}{#1}{#2}{#3}{#4}}

\subsection{Weak bialgebras arising from a solution}

Throughout this section, we work over a fixed field $k$. By an algebra, we always mean an associative and unital algebra over the field $k$. The term `coalgebra' is used in a similar manner. A {\em weak bialgebra} is an algebra endowed with a coalgebra structure $\Delta: B \to B \otimes_k B$ and $\varepsilon: B \to k$ such that
\begin{gather*}
  \Delta(a b) = \Delta(a) \Delta(b), \\
  (\Delta(1) \otimes 1) \cdot (1 \otimes \Delta(1))
  = 1_{(1)} \otimes 1_{(2)} \otimes 1_{(3)}
  = (1 \otimes \Delta(1)) \cdot (\Delta(1) \otimes 1), \\
  \varepsilon(a b_{(2)}) \varepsilon(b_{(1)} c)
  = \varepsilon(a b c) = \varepsilon(a b_{(1)}) \varepsilon(b_{(2)} c)
\end{gather*}
for all $a, b, c \in B$. Here we have used the Sweedler notation, such as
\begin{equation*}
  \Delta(a) = a_{(1)} \otimes a_{(2)}
  \quad \text{and} \quad
  \Delta(a_{(1)}) \otimes a_{(2)}
  = a_{(1)} \otimes a_{(2)} \otimes a_{(3)}
  = a_{(1)} \otimes \Delta(a_{(2)})
\end{equation*}
to express the comultiplication of $a \in B$.

Let $\Lambda$ be a non-empty set, and let $\mathcal{F} = \Map(\Lambda, k)$ be the algebra of the $k$-valued functions on $\Lambda$. On the one hand, Shibukawa \cite{MR3448180} introduced a construction of a bialgebroid over $\mathcal{F}$ ($=$ $\times_{\mathcal{F}}$-bialgebra in the sense of Sweedler \cite{MR0364332} and Takeuchi \cite{MR0506407}) from a dynamical Yang-Baxter map $(X, \sigma)$ satisfying the following condition:
\begin{equation}
  \label{eq:Shibukawa-condition}
  \begin{gathered}
    \text{the map $\sigma(\lambda)$ is invertible for all $\lambda \in \Lambda$, the set $X$ is finite}, \\[-3pt]
    \text{and the map $\Lambda \to \Lambda$ given by $\lambda \mapsto \lambda \dyact x$ is bijective for all $x \in X$}.
  \end{gathered}
\end{equation}
On the other hand, Hayashi \cite{MR1623965} gave a construction of a weak bialgebra from a certain class of braided object of the category $\bimod{\mathcal{F}}$ of $\mathcal{F}$-bimodules. Thus, when $\Lambda$ is finite, one can obtain a weak bialgebra by applying his construction to the linearization of the braided quiver
\begin{equation*}
  (Q, \widetilde{\sigma}) := \Br(\mathsf{Q})(X, \sigma).
\end{equation*}
Suppose that $\Lambda$ is finite. Then $\mathcal{F}$ is a separable algebra and hence a bialgebroid over $\mathcal{F}$ is a weak bialgebra by Schauenburg \cite{MR2024429}. Thus we have two constructions of a weak bialgebra from a dynamical Yang-Baxter map satisfying~\eqref{eq:Shibukawa-condition}. The aim of this section is to discuss relations between these two constructions.

\subsection{Hayashi's construction}

We fix a non-empty finite set $\Lambda$ and consider the algebra $\mathcal{F} = \Map(\Lambda, k)$ of $k$-valued functions on $\Lambda$. If $Q$ is a quiver over $\Lambda$, then the vector space $W = \mathrm{span}_k(Q)$ spanned by $Q$ is an $\mathcal{F}$-bimodule by
\begin{equation*}
  f \cdot a = f(\src(a)) a
  \quad \text{and} \quad
  a \cdot f = f(\tgt(a)) a
\end{equation*}
for $f \in \mathcal{F}$ and $a \in Q$. A {\em star-triangular face model} is a pair $(Q, w)$ consisting of a finite quiver $Q$ and a morphism $w: W \otimes_{\mathcal{F}} W \to W \otimes_{\mathcal{F}} W$ of $\mathcal{F}$-bimodules, where $W = \mathrm{span}_k(Q)$ is the $\mathcal{F}$-bimodule associated to $Q$, such that $w$ is invertible and $(W, w)$ is a braided object of $\bimod{\mathcal{F}}$. Hayashi \cite{MR1623965} introduced a construction of a weak bialgebra $\mathfrak{A}(w)$ from such a model and studied its Hopf closure, that is, a weak Hopf algebra with a certain universal property for $\mathfrak{A}(w)$.

We recall the construction of $\mathfrak{A}(w)$. Let $(Q, w)$ be a star-triangular face model, and set $W = \mathrm{span}_k(Q)$. For a positive integer $m$, we define $Q^{(m)}$ inductively by
\begin{equation*}
  Q^{(1)}= Q \text{\quad and \quad} Q^{(m + 1)} = Q^{(m)} \times_{\Lambda} Q.
\end{equation*}
An element of $Q^{(m)}$ can be regarded as a path on $Q$ of length $m$. For simplicity of notation, we set $Q^{(0)} = \Lambda$. An element $\lambda \in Q^{(0)}$ will be regarded as a path on $Q$ of length zero from $\lambda$ to $\lambda$.

It is easy to see that the set $\{ a \otimes b \in W \otimes_{\mathcal{F}} W \mid (a, b) \in Q^{(2)} \}$ is a basis of the $k$-vector space $W \otimes_{\mathcal{F}} W$. We say that a quadruple $(a, b, c, d)$ of elements of $Q$ form a {\em face} if $\src(a) = \src(c)$, $\tgt(a) = \src(b)$, $\tgt(c) = \src(d)$, $\tgt(b) = \tgt(d)$. Since $w$ is a morphism of $\mathcal{F}$-bimodules, we have
\begin{equation*}
  w(a \otimes b) \in \mathrm{span}_k \{ c \otimes d \mid \text{$c, d \in Q$ and $(a, b, c, d)$ form a face} \}
\end{equation*}
for all $(a, b) \in Q^{(2)}$. We define $\W{a}{b}{c}{d} \in k$ for a face $(a, b, c, d)$ by
\begin{equation*}
  w(a \otimes b) = \sum_{(c, d) \in Q^{(2)}} \W{a}{b}{c}{d} c \otimes d
  \quad ((a, b) \in Q^{(2)}).
\end{equation*}
For simplicity, we set $\W{a}{b}{c}{d} = 0$ unless $(a, b, c, d)$ form a face.

\begin{definition}
  The weak bialgebra $\mathfrak{A}(w)$ associated to the star-triangular face model $(W, w)$ is, as a $k$-algebra, generated by the symbols
  \begin{equation*}
    \E{p}{q} \quad (p, q \in Q^{(m)}, m = 0, 1, 2, \dotsc)
  \end{equation*}
  subject to the following relations:
  \begin{equation}
    \label{eq:Hayashi-rel-0}
    \sum_{\lambda, \mu \in Q^{(0)}} \E{\lambda}{\mu} = 1,
  \end{equation}
  \begin{equation}
    \label{eq:Hayashi-rel-1}
    \E{p}{q} \cdot \E{p'}{q'} = \delta_{\tgt(p), \src(p')} \delta_{\tgt(q), \src(q')} \E{p \, p'}{q \, q'}
  \end{equation}
  for $p, q \in Q^{(m)}$ and $p', q' \in Q^{(n)}$ ($m, n = 0, 1, 2, \dotsc$), and
  \begin{equation}
    \label{eq:Hayashi-rel-2}
    \sum_{(x, y) \in Q^{(2)}} \W{x}{y}{a}{b} \E{x}{c} \E{y}{d}
    = \sum_{(x, y) \in Q^{(2)}} \W{c}{d}{x}{y} \E{a}{x} \E{b}{y}
  \end{equation}
  for $(a, b), (c, d) \in Q^{(2)}$. The coalgebra structure of $\mathfrak{A}(w)$ is determined by
  \begin{equation*}
    \Delta \left( \E{p}{q} \right) = \sum_{t \in Q^{(m)}} \E{p}{t} \otimes \E{t}{q}
    \quad \text{and} \quad \varepsilon \left( \E{p}{q} \right) = \delta_{p,q}.
  \end{equation*}
  for $p, q \in Q^{(m)}$ ($m = 0, 1, 2, \dotsc$).
\end{definition}

\subsection{Shibukawa's construction}

Let $\Lambda$ be a non-empty set, and let $\mathcal{F}$ be the algebra of $k$-valued functions on $\Lambda$. Shibukawa \cite{MR3448180} introduced a method to construct a bialgebroid over $\mathcal{F}$ from a dynamical Yang-Baxter map satisfying~\eqref{eq:Shibukawa-condition}. To describe his construction in a short way, we recall the following algebraic constructions:

\begin{definition}
  Let $R$ be an algebra over $k$, and let $S$ be a set. The free $R$-bimodule $R S R$ with basis $S$ is just the free left $R^{\mathrm{e}}$-module over $S$, where $R^{\mathrm{e}} = R \otimes_k R^{\mathrm{op}}$. The free $R$-ring $R \langle S \rangle$ generated by $S$ is the tensor algebra
  \begin{equation*}
    R \langle S \rangle = R \oplus M \oplus (M \otimes_R M) \oplus (M \otimes_R M \otimes_R M) \oplus \dotsb 
  \end{equation*}
  of the $R$-bimodule $M = R S R$.
\end{definition}

If we write $(a \otimes_k b)s \in R S R$ for $a, b \in R$ and $X \in S$ as $a X b$, then every element of $R \langle S \rangle$ is a finite sum of elements of the form $a_1 X_1 a_2 \dotsb a_n X_n a_{n+1}$ for some $a_i \in R$ and $X_i \in S$. We note that an element of $R$ and an element of $S$ do not commute in $R \langle S \rangle$, but an element of $k$ $(\subset R)$ is central in $R \langle S \rangle$. Thus $R \langle S \rangle$ is an algebra over $k$. One can define ``the $R$-ring generated by $S$ subject to the relations ...'' as the quotient algebra of $R \langle S \rangle$ by the ideal defined by the relations.

Now let $(X, \sigma)$ be a dynamical Yang-Baxter map satisfying \eqref{eq:Shibukawa-condition}. To avoid technical difficulty, we only consider the case where $\Lambda$ is finite. Then $\mathcal{F}$ is a separable algebra with Frobenius system
\begin{equation}
  \label{eq:F-Frob-system}
  t(f) = \sum_{\lambda \in \Lambda} f(\lambda)
  \quad (f \in \mathcal{F})
  \quad \text{and} \quad
  e = \sum_{\lambda \in \Lambda} \delta_{\lambda} \otimes \delta_{\lambda}
  \in \mathcal{F} \otimes_k \mathcal{F},
\end{equation}
where $\delta_{\lambda} \in \mathcal{F}$ is the function defined by $\delta_{\lambda}(\mu) = \delta_{\lambda\mu}$ for $\mu \in \Lambda$. By the result of Schauenburg \cite{MR2024429}, we may regard a bialgebroid over $\mathcal{F}$ as a weak bialgebra over $k$. We also remark that Shibukawa has considered the equation \eqref{eq:set-th-DYBE-2} in \cite{MR3448180} instead of the set-theoretical dynamical Yang-Baxter equation~\eqref{eq:set-th-DYBE} considered in this paper. For this reason, we use the notation~\eqref{eq:set-th-DYBE-notation} and define
\begin{equation*}
  \check{\sigma}: \Lambda \times X \times X \to X \times X,
  \quad (\lambda, x, y) \mapsto (y \dybmr{\lambda} x, y \dybml{\lambda} x)
\end{equation*}
so that $\check{\sigma}$ satisfies \eqref{eq:set-th-DYBE-2-inv-cond} and \eqref{eq:set-th-DYBE-2} (see Remark~\ref{rem:set-th-DYBE-2}). The weak bialgebra $\mathfrak{B}(\sigma)$ introduced in the below is in fact Shibukawa's bialgebroid $A_{\check{\sigma}}$ over $\mathcal{F}$ regarded as a weak bialgebra over $k$ in Schauenburg's manner.

\begin{definition}
  We set $\mathcal{E} := \mathcal{F} \otimes_k \mathcal{F}$ and write $\xi \otimes 1 \in \mathcal{E}$ and $1 \otimes \xi \in \mathcal{E}$ simply by $\xi$ and $\overline{\xi}$, respectively. The weak bialgebra $\mathfrak{B}(\sigma)$ is the $\mathcal{E}$-ring generated by the formal symbols $L^{+}_{a b}$, $L_{a b}^{-}$ ($a, b \in X$) subject to the relations
  \begin{gather}
    \label{eq:Shibukawa-rel-1}
    \sum_{b \in X} L_{a b}^{+} L_{b c}^{-} = 1 = \sum_{b \in X} L_{a b}^{-} L_{b c}^{+}, \\
    \label{eq:Shibukawa-rel-2}
    L_{a b}^{+} \cdot \xi = (a \triactl \xi) \cdot L_{a b}^{+}, \quad
    L_{a b}^{+} \cdot \overline{\xi} = \overline{b \triactl \xi} \cdot L_{a b}^{+}, \\
    \label{eq:Shibukawa-rel-3}
    L_{a b}^{-} \cdot (b \triactl \xi) = \xi \cdot L_{a b}^{-}, \quad
    L_{a b}^{-} \cdot \overline{a \triactl \xi} = \overline{\xi} \cdot L_{a b}^{-}, \\
    \label{eq:Shibukawa-rel-4}
    \sum_{x, y \in X} \sigma^{x y}_{a b} \, L_{x c}^{+} L_{y d}^{+}
    = \sum_{x, y \in X} \overline{\sigma^{c d}_{x y}} \, L_{a x}^{+} L_{b y}^{+}
  \end{gather}
  for $a, b, c, d \in X$ and $\xi \in \mathcal{F}$, where
  \begin{equation*}
    (a \triactl \xi)(\lambda) = \xi(\lambda \dyact a)
    \quad (a \in X, \xi \in \mathcal{F}, \lambda \in \Lambda)
  \end{equation*}
  and $\sigma^{p q}_{r s} \in \mathcal{F}$ for $p, q, r, s \in X$ is defined by
  \begin{equation}
    \label{eq:Shibukawa-sigma-def}
    \sigma^{p q}_{r s}(\lambda)
    =
    \begin{cases}
      1 & \text{if $\sigma(\lambda, p, q) = (r, s)$}, \\
      0 & \text{otherwise}.
    \end{cases}
  \end{equation}
  The comultiplication $\Delta: \mathfrak{B}(\sigma) \to \mathfrak{B}(\sigma) \otimes_k \mathfrak{B}(\sigma)$ is determined by
  \begin{gather*}
    \Delta(\xi) = \sum_{\lambda \in \Lambda} \xi \overline{\delta}_{\lambda} \otimes \delta_{\lambda},
    \quad
    \Delta(\overline{\xi}) = \sum_{\lambda \in \Lambda} \overline{\delta}_{\lambda} \otimes \overline{\xi} \delta_{\lambda}, \\
    \Delta(L_{a b}^{\pm}) = \sum_{c \in X, \lambda \in \Lambda} \overline{\delta}_{\lambda} L_{a c}^{\pm} \otimes \delta_{\lambda} L_{c b}^{\pm}
  \end{gather*}
  for $\xi \in \mathcal{F}$ and $a, b \in X$. To define the counit, we introduce the left action $\rightharpoonup$ of the algebra $\mathfrak{B}(\sigma)$ on the vector space $\mathcal{F}$ defined by
  \begin{equation*}
    \xi \rightharpoonup f = \xi f = \overline{\xi} \rightharpoonup f
    \quad \text{and} \quad
    (L_{a b}^{\pm} \rightharpoonup f)(\lambda)
    = \delta_{a b} f(\lambda \dyact a^{\pm 1})
  \end{equation*}
  for $\xi \in \mathcal{F}$ and $a, b \in X$, where $(-) \dyact a^{+1} = (-) \dyact a$ and $(-) \dyact a^{-1}$ is the inverse of the map $(-) \dyact a$. Now the counit $\varepsilon: \mathfrak{B}(\sigma) \to k$ is defined by
  \begin{equation*}
    \varepsilon(b) = t(b \rightharpoonup 1_{\mathcal{F}}) \quad (b \in \mathfrak{B}(\sigma)),
  \end{equation*}
  where $t: \mathcal{F} \to k$ is the Frobenius trace of $\mathcal{F}$ given by~\eqref{eq:F-Frob-system} and $1_{\mathcal{F}} = \sum_{\lambda \in \Lambda} \delta_{\lambda}$ is the unit element of $\mathcal{F}$.
\end{definition}

\subsection{Comparison of two constructions}

Let $\Lambda$ be a non-empty finite set, and let $(X, \sigma)$ be a dynamical Yang-Baxter map satisfying~\eqref{eq:Shibukawa-condition}. On the one hand, we have the weak bialgebra $\mathfrak{B}(\sigma)$ by Shibukawa's construction. On the other hand, we have a weak bialgebra by composing Theorem~\ref{thm:BrQ} and Hayashi's construction: Namely, by the theorem, we have a braided object $(Q, \widetilde{\sigma}) := \Br(\mathsf{Q})(X, \sigma)$ of $\Quiv{\Lambda}$. Set $W = \mathrm{span}_k(Q)$. The map $\widetilde{\sigma}$ induces a linear map
\begin{equation*}
  w_{\sigma}: W \otimes_{\mathcal{F}} W \to W \otimes_{\mathcal{F}} W,
  \quad w_{\sigma}(a \otimes b) = c \otimes d
  \quad ((a, b) \in Q^2, (c, d) = \widetilde{\sigma}(a, b)),
\end{equation*}
which gives rise to a star-triangular face model $(Q, w_{\sigma})$. We thus obtain a weak bialgebra $\mathfrak{A}(w_{\sigma})$ from the braided object $(X, \sigma)$. The main result of this section is the following relation between these two weak bialgebras:

\begin{theorem}
  \label{thm:two-weak-bialg}
  For an integer $m \ge 0$ and two elements
  \begin{equation}
    \label{eq:thm-comparison-paths}
    p = ((\lambda_1, x_1), \dotsc, (\lambda_m, x_m))
    \quad \text{and} \quad
    q = ((\mu_1, y_1), \dotsc, (\mu_m, y_m))
  \end{equation}
  of $Q^{(m)}$, we set
  \begin{equation}
    \label{eq:w-bialg-map-def}
    \phi \left( \E{p}{q} \right) = \delta_{\lambda_1} \overline{\delta}_{\mu_1} L_{x_1 y_1}^+ \dotsb L_{x_m y_m}^+.
  \end{equation}
  Then $\phi$ extends to a morphism $\phi: \mathfrak{A}(w_{\sigma}) \to \mathfrak{B}(\sigma)$ of weak bialgebras.
\end{theorem}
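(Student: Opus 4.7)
The plan is to verify that the assignment \eqref{eq:w-bialg-map-def} extends to a well-defined algebra homomorphism $\phi: \mathfrak{A}(w_\sigma) \to \mathfrak{B}(\sigma)$ respecting the defining relations \eqref{eq:Hayashi-rel-0}--\eqref{eq:Hayashi-rel-2}, and then to check its coalgebra compatibility. My starting point is the dictionary between the face weights of $(Q, w_\sigma)$ and Shibukawa's indicator functions $\sigma^{p q}_{r s} \in \mathcal{F}$: from \eqref{eq:def-sigma-tilde-2}, if $a = (\lambda, X')$, $b = (\lambda \dyact X', Y')$, $c = (\lambda, A)$ and $d = (\lambda \dyact A, B)$ lie in $Q^{(2)}$, then the face weight $\W{a}{b}{c}{d}$ equals $1$ precisely when $\sigma(\lambda)(X', Y') = (A, B)$ and vanishes otherwise; equivalently, $\W{a}{b}{c}{d} = \sigma^{X' Y'}_{A B}(\lambda)$. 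Since $\sigma(\lambda)$ is invertible by \eqref{eq:Shibukawa-condition}, the morphism $\widetilde{\sigma}$ is a bijection on $Q \times_{\Lambda} Q$, so both sides of the face relation \eqref{eq:Hayashi-rel-2} reduce to a single nonzero summand.

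For the algebra relations, \eqref{eq:Hayashi-rel-0} is clear from $\sum_{\lambda, \mu \in \Lambda} \delta_\lambda \overline{\delta}_\mu = 1_{\mathfrak{B}(\sigma)}$. For the concatenation rule \eqref{eq:Hayashi-rel-1}, I would first derive, from \eqref{eq:Shibukawa-rel-2}--\eqref{eq:Shibukawa-rel-3} and the bijectivity of $\lambda \mapsto \lambda \dyact a$, the commutation identities
\begin{equation*}
  L^+_{a b} \cdot \delta_\nu = \delta_{\nu \dyact a^{-1}} \cdot L^+_{a b},
  \qquad
  L^+_{a b} \cdot \overline{\delta}_\nu = \overline{\delta}_{\nu \dyact b^{-1}} \cdot L^+_{a b}.
\end{equation*}
Pushing the leading idempotents of $\phi(\E{p'}{q'})$ from right to left past the string of $L^+$'s in $\phi(\E{p}{q})$ via these identities converts them into idempotents that collapse against the external $\delta_{\lambda_1(p)} \overline{\delta}_{\lambda_1(q)}$, producing precisely the Kronecker factors $\delta_{\tgt(p), \src(p')} \delta_{\tgt(q), \src(q')}$ of \eqref{eq:Hayashi-rel-1} and leaving the concatenated expression $\phi(\E{p \, p'}{q \, q'})$. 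For the face relation \eqref{eq:Hayashi-rel-2}, by the observation above, applying $\phi$ reduces both sides to
\begin{equation*}
  \delta_{\lambda_a} \overline{\delta}_{\lambda_c} L^+_{X', C} L^+_{Y', D}
  = \delta_{\lambda_a} \overline{\delta}_{\lambda_c} L^+_{A, C'} L^+_{B, D'},
\end{equation*}
where $\lambda_a = \src(a)$, $\lambda_c = \src(c)$, $(X', Y') = \sigma(\lambda_a)^{-1}(A, B)$ and $(C', D') = \sigma(\lambda_c)(C, D)$. This identity follows from multiplying Shibukawa's relation \eqref{eq:Shibukawa-rel-4} on the left by $\delta_{\lambda_a} \overline{\delta}_{\lambda_c}$, which localises both sums to their unique nonvanishing term.

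The coalgebra compatibility is then a direct expansion. For the counit, iterating the action $L^+_{a b} \rightharpoonup (-)$ on $1_{\mathcal{F}}$ yields $L^+_{x_1 y_1} \cdots L^+_{x_m y_m} \rightharpoonup 1_{\mathcal{F}} = \prod_{i = 1}^m \delta_{x_i, y_i}$ as a constant function, and acting further by $\delta_{\lambda_1(p)} \overline{\delta}_{\lambda_1(q)}$ and applying the Frobenius trace gives $\varepsilon_{\mathfrak{B}}(\phi(\E{p}{q})) = \delta_{p, q}$. For the comultiplication, one expands $\Delta_{\mathfrak{B}}$ on each factor of $\phi(\E{p}{q})$ via the Sweedler-type formulas for $\Delta(\xi)$, $\Delta(\overline{\xi})$ and $\Delta(L^+_{a b})$ and multiplies out; the products of idempotents in each tensor factor collapse via Kronecker identities, leaving free parameters $(\nu, z_1, \ldots, z_m)$ which one identifies with an intermediate path $t \in Q^{(m)}$ starting at $\nu$ with consecutive components $z_i$. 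This recovers the Hayashi comultiplication formula $(\phi \otimes \phi) \Delta_{\mathfrak{A}}(\E{p}{q}) = \sum_t \phi(\E{p}{t}) \otimes \phi(\E{t}{q})$.

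The main obstacle is the face relation \eqref{eq:Hayashi-rel-2}: one must simultaneously invoke the invertibility hypothesis \eqref{eq:Shibukawa-condition} to isolate the unique nonzero summand on each side, translate the face weights of $w_\sigma$ into Shibukawa's indicator functions $\sigma^{p q}_{r s}$, and carefully track how the idempotents $\delta_\lambda, \overline{\delta}_\lambda \in \mathcal{E}$ localise the two sums in \eqref{eq:Shibukawa-rel-4}. All remaining verifications reduce to routine applications of the commutation relations and Kronecker identities for the $\delta$'s.
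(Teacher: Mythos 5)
Your proposal is correct and follows essentially the same route as the paper's proof: the same dictionary $\W{(\lambda,a)}{(\lambda',b)}{(\mu,c)}{(\mu',d)} = \sigma^{ab}_{cd}(\lambda)\delta_{\lambda,\mu}$ between face weights and Shibukawa's indicator functions, the same commutation identities derived from \eqref{eq:Shibukawa-rel-2}--\eqref{eq:Shibukawa-rel-3} to handle \eqref{eq:Hayashi-rel-1}, the reduction of \eqref{eq:Hayashi-rel-2} to \eqref{eq:Shibukawa-rel-4} multiplied by the idempotents $\delta_\mu\overline{\delta}_\nu$, and the same direct verifications for the comultiplication and counit. Your "unique nonvanishing summand" phrasing is only a cosmetic repackaging of the paper's term-by-term identification of the two sums.
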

\begin{proof}
  We shall check that $\phi$ preserves the relations~\eqref{eq:Hayashi-rel-0}--\eqref{eq:Hayashi-rel-2}. It is easy to see that $\phi$ preserves \eqref{eq:Hayashi-rel-0}. By \eqref{eq:Shibukawa-condition}, \eqref{eq:Shibukawa-rel-2} and~\eqref{eq:Shibukawa-rel-3}, we have
  \begin{equation*}
    \delta_{\lambda} L_{x y}^{+}
    = (x \triactl \delta_{\lambda \dyact x}) L_{x y}^{+}
    = L_{x y}^{+} \delta_{\lambda \dyact x}
    \text{\quad and \quad}
    \overline{\delta}_{\mu} L_{x y}^{+} = L_{x y}^{+} \overline{\delta}_{\mu \dyact x}
  \end{equation*}
  for all $x, y \in X$ and $\lambda, \mu \in \Lambda$. By using these equations and the definition of the quiver $Q = \mathsf{Q}(X)$, one can verify that $\phi$ preserves \eqref{eq:Hayashi-rel-1}.

  We now check that $\phi$ preserves \eqref{eq:Hayashi-rel-2}. For this purpose, we determine the symbol $\mathbf{w}[ \, \cdots \, ]$ associated to the star-triangular face model $(Q, w_{\sigma})$ arising from $(X, \sigma)$. Suppose that $((\lambda, a), (\lambda', b))$ and $((\mu, c), (\mu', d))$ are elements of $Q^{(2)}$. By using $\sigma^{a b}_{c d} \in \mathcal{F}$ defined by~\eqref{eq:Shibukawa-sigma-def}, the symbol $\mathbf{w}[ \, \cdots \, ]$ is expressed as follows:
  \begin{equation*}
    \W{(\lambda,a)}{(\lambda',b)}{(\mu,c)}{(\mu',d)} = \sigma^{a b}_{c d}(\lambda) \delta_{\lambda, \mu}.
  \end{equation*}
  Thus we compute:
  \begin{align*}
    & \sum_{((\lambda,x),(\lambda',y)) \in Q^2}^{}
      \W{(\lambda,x)}{(\lambda',y)}{(\mu,a)}{(\mu',b)}
      \phi \left( \E{(\lambda,x)}{(\nu,c)} \right)
      \phi \left( \E{(\lambda',y)}{(\nu',d)} \right) \\
    & = \sum_{\lambda \in \Lambda, x, y \in X}^{}
      \sigma_{a b}^{x y}(\lambda) \delta_{\mu}(\lambda) \delta_{\lambda}
      \overline{\delta}_{\gamma} L_{x c}^+ L_{y d}^+
      = \sum_{x, y \in X}
      \sigma_{a b}^{x y} \, \delta_{\mu} \overline{\delta}_{\gamma} L_{x c}^+ L_{y d}^+ \\
    & = \delta_{\mu} \overline{\delta}_{\gamma}
      \cdot (\text{the left-hand side of \eqref{eq:Shibukawa-rel-4}}), \\
    & \sum_{((\lambda,x),(\lambda',y)) \in Q^2}^{}
      \W{(\nu,c)}{(\nu',d)}{(\lambda,x)}{(\lambda',y)}
      \phi \left( \E{(\mu,a)}{(\lambda,x)} \right)
      \phi \left( \E{(\mu',b)}{(\lambda',y)} \right) \\
    & = \sum_{\lambda \in \Lambda, x, y \in X}^{}
      \sigma_{x y}^{c d}(\lambda) \delta_{\nu}(\lambda) \delta_{\mu} \overline{\delta}_{\lambda}
      L_{a x}^+ L_{b y}^+
      = \sum_{\lambda \in \Lambda, x, y \in X}^{}
      \overline{\sigma_{x y}^{c d}} \, \delta_{\mu} \overline{\delta}_{\nu}
      L_{a x}^+ L_{b y}^+ \\
    & = \delta_{\mu} \overline{\delta}_{\gamma}
      \cdot (\text{the right-hand side of \eqref{eq:Shibukawa-rel-4}}).
  \end{align*}
  Namely, $\phi$ preserves \eqref{eq:Hayashi-rel-2}. Hence $\phi$ extends to a morphism $\phi: \mathfrak{A}(w_{\sigma}) \to \mathfrak{B}(\sigma)$ of algebras over $k$.

  To complete the proof, we check that the algebra map $\phi$ preserves the coalgebra structure of $\mathfrak{A}(w_{\sigma})$. Since the comultiplication of a weak bialgebra is multiplicative, it is sufficient to show
  \begin{equation*}
    \Delta \phi \left( \E{(\lambda,x)}{(\mu,y)} \right)
    = (\phi \otimes \phi) \Delta \left( \E{(\lambda,x)}{(\mu,y)} \right)
  \end{equation*}
  for all $\lambda, \mu \in \Lambda$ and $x, y \in X$ to show that $\phi$ preserves the comultiplication. This is verified as follows:
  \begin{align*}
    \Delta \phi \left( \E{(\lambda,x)}{(\mu,y)} \right)
    & = \sum_{\substack{\nu_1, \nu_2, \nu_3 \in \Lambda, \\ c \in X}}
      (\delta_{\lambda} \overline{\delta}_{\nu_1} \otimes \delta_{\nu_1})
      \cdot (\overline{\delta}_{\nu_2} \otimes \overline{\delta}_{\mu} \delta_{\nu_2})
      \cdot (\overline{\delta}_{\nu_3} L_{x c}^{+} \otimes \delta_{\nu_3} L_{c y}^{+}) \\
    & = \sum_{\nu \in \Lambda, c \in X} \delta_{\lambda} \overline{\delta}_{\nu} L_{x c}^{+}
      \otimes \overline{\delta}_{\mu} \delta_{\nu} L_{c y}^{+} \\
    & = \sum_{\nu \in \Lambda, c \in X} \phi \left( \E{(\lambda,x)}{(\mu,c)} \right)
      \otimes \phi \left( \E{(\nu, c)}{(\mu,y)} \right) \\
    & = (\phi \otimes \phi) \Delta \left( \E{(\lambda,x)}{(\mu,y)} \right).
  \end{align*}
  Finally, we check that $\phi$ preserves the counit. Let $p$ and $q$ be paths on $Q$ of length $m$ as in \eqref{eq:thm-comparison-paths}. Then, by the definition of the quiver $Q$, we have
  \begin{equation}
    \label{eq:thm-comparison-paths-cond}
    \lambda_{i + 1} = \lambda_i \dyact x_i
    \quad \text{and} \quad
    \mu_{i + 1} = \mu_i \dyact y_i
    \quad (i = 1, \dotsc, m -1).
  \end{equation}
  Since $a \triactl 1_{\mathcal{F}} = 1_{\mathcal{F}}$ for all $a \in X$, we have
  \begin{align*}
    \phi \left( \E{p}{q} \right) \rightharpoonup 1_{\mathcal{F}}
    & = \Big( \delta_{\lambda_1} \overline{\delta}_{\mu_1} L_{x_1 y_1}^+ \dotsb L_{x_m y_m}^+ \Big) \rightharpoonup 1_{\mathcal{F}} \\
    & = \delta_{x_1,y_1} \delta_{x_2,y_2} \dotsb \delta_{x_m,y_m} \delta_{\lambda_1} \delta_{\mu_1},
  \end{align*}
  where $\rightharpoonup$ is the action of $\mathfrak{B}(\sigma)$ on the vector space $\mathcal{F}$ used in the definition of the counit of $\mathfrak{B}(\sigma)$. Hence,
  \begin{equation*}
    \varepsilon \phi \left( \E{p}{q} \right)
    = \delta_{x_1,y_1} \delta_{x_2,y_2} \dotsb \delta_{x_m,y_m} \delta_{\lambda_1,\mu_1}.
  \end{equation*}
  In view of \eqref{eq:thm-comparison-paths-cond}, the right-hand side of this equation is $1$ if $p = q$ and $0$ otherwise. Thus we conclude that $\varepsilon \phi(a) = \varepsilon(a)$ for all $a \in \mathfrak{A}(w_{\sigma})$.
\end{proof}

\begin{remark}
  \label{rem:two-weak-bialg}
  (1) Let $\mathfrak{B}^{+}(\sigma)$ be the $\mathcal{E}$-ring generated by $L_{a b}^{+}$ ($a, b \in X$) subject to the relations~\eqref{eq:Shibukawa-rel-2} and~\eqref{eq:Shibukawa-rel-4}. The algebra $\mathfrak{B}^{+}(\sigma)$ is a weak bialgebra by the coalgebra structure defined in the same way as $\mathfrak{B}(\sigma)$. The weak bialgebra $\mathfrak{A}(w_{\sigma})$ is isomorphic to $\mathfrak{B}^{+}(\sigma)$. Indeed, the algebra map $\mathfrak{A}(w_{\sigma}) \to \mathfrak{B}{}^{+}(\sigma)$ defined by the same formula as~\eqref{eq:w-bialg-map-def} has the inverse determined by
  \begin{equation*}
    L_{a b}^{+} \mapsto \sum_{\lambda, \mu \in \Lambda} \E{(\lambda, a)}{(\mu, b)},
    \quad
    \xi_1^{} \overline{\xi}_2 \mapsto \sum_{\lambda, \mu \in Q^{(0)}} \xi_1^{}(\lambda) \xi_2^{}(\mu) \E{\lambda}{\mu}
  \end{equation*}
  for $a, b \in X$ and $\xi_1, \xi_2 \in \mathcal{F}$. The algebra $\mathfrak{B}(\sigma)$ has extra generators $L_{a b}^{-}$ and hence is not isomorphic to $\mathfrak{A}(w_{\sigma})$ in general.

  (2) Let $(W, w)$ be a star-triangular face model. Hayashi \cite{MR1623965} showed that the weak bialgebra $\mathfrak{A}(w)$ has a Hopf closure $\mathfrak{H}(w)$ under the assumption that $w$ is {\em closable} \cite[Section 3]{MR1623965}. We recall that, as an algebra, $\mathfrak{H}(w)$ is generated by
  \begin{equation*}
    \E{p}{q}, \quad \E{\overline{p}}{\overline{q}} \quad (p, q \in Q^{(2)})
  \end{equation*}
  subject to the relations like \eqref{eq:Hayashi-rel-0}--\eqref{eq:Hayashi-rel-2}. Notably, Hayashi introduced an extension $\mathbf{w}_{\mathrm{LD}}$ of $\mathbf{w}$, called the Lyubashenko double, and showed that the equation
  \begin{equation}
    \label{eq:Hayashi-rel-3}
    \newcommand{\olE}[2]{\mathop{%
        \renewcommand{\arraystretch}{1.2}%
        \setlength{\arraycolsep}{1pt}%
        \mathbf{e} \! \left[
          \begin{array}{c} {\mathstrut \overline{#1}} \\ {\mathstrut \overline{#2}} \end{array}
        \right]}}
    \newcommand{\olWLD}[4]{\facewt{\mathbf{w}_{\mathrm{LD}}}%
      {\overline{#1}}{\overline{#2}}{\overline{#3}}{\overline{#4}}}
    \sum_{(x, y) \in Q^{(2)}} \olWLD{x}{y}{a}{b} \olE{x}{c} \olE{y}{d}
    = \sum_{(x, y) \in Q^{(2)}} \olWLD{c}{d}{x}{y} \olE{a}{x} \olE{b}{y}
  \end{equation}
  and its variants hold in $\mathfrak{H}(w)$ \cite[Proposition 5.5]{MR1623965}. Now we consider the case where $(Q, w_{\sigma})$ is a star-triangular face model arising from a dynamical Yang-Baxter map $(X, \sigma)$ satisfying~\eqref{eq:Shibukawa-condition}. Shibukawa observed that $\mathfrak{B}(\sigma)$ has an antipode (and thus it is a weak Hopf algebra by \cite{MR2024429}) when $\sigma$ is {\em rigid} \cite[Section 3]{MR3448180}. Since we do not know any relations between Hayashi's closability and Shibukawa's rigidity, we assume that, for the time being, $w_{\sigma}$ is closable and $\sigma$ is rigid. Then, by the universal property of the Hopf closure, one can extend the map $\phi: \mathfrak{A}(w_{\sigma}) \to \mathfrak{B}(\sigma)$ of the above theorem to a map $\mathfrak{H}(w_{\sigma}) \to \mathfrak{B}(\sigma)$ of weak Hopf algebras. This map does not seem to be an isomorphism in general. The difficulty is that $\mathfrak{B}(\sigma)$ has no defining relations corresponding to \eqref{eq:Hayashi-rel-3} and its variants.
\end{remark}


\begin{thebibliography}{10}

\bibitem{MR2183847}
Nicol\'as Andruskiewitsch.
\newblock On the quiver-theoretical quantum {Y}ang-{B}axter equation.
\newblock {\em Selecta Math. (N.S.)}, 11(2):203--246, 2005.
\newblock With an appendix by Mitsuhiro Takeuchi.

\bibitem{MR0062119}
A.~H. Clifford.
\newblock Bands of semigroups.
\newblock {\em Proc. Amer. Math. Soc.}, 5:499--504, 1954.

\bibitem{MR1183474}
V.~G. Drinfel$\prime$d.
\newblock On some unsolved problems in quantum group theory.
\newblock In {\em Quantum groups ({L}eningrad, 1990)}, volume 1510 of {\em
  Lecture Notes in Math.}, pages 1--8. Springer, Berlin, 1992.

\bibitem{MR2142557}
Pavel Etingof and Fr\'ed\'eric Latour.
\newblock {\em The dynamical {Y}ang-{B}axter equation, representation theory,
  and quantum integrable systems}, volume~29 of {\em Oxford Lecture Series in
  Mathematics and its Applications}.
\newblock Oxford University Press, Oxford, 2005.

\bibitem{MR1645196}
Pavel Etingof and Alexander Varchenko.
\newblock Solutions of the quantum dynamical {Y}ang-{B}axter equation and
  dynamical quantum groups.
\newblock {\em Comm. Math. Phys.}, 196(3):591--640, 1998.

\bibitem{MR1404026}
Giovanni Felder.
\newblock Conformal field theory and integrable systems associated to elliptic
  curves.
\newblock In {\em Proceedings of the {I}nternational {C}ongress of
  {M}athematicians, {V}ol.\ 1, 2 ({Z}\"urich, 1994)}, pages 1247--1255.
  Birkh\"auser, Basel, 1995.

\bibitem{MR747093}
J.-L. Gervais and A.~Neveu.
\newblock Novel triangle relation and absence of tachyons in {L}iouville string
  field theory.
\newblock {\em Nuclear Phys. B}, 238(1):125--141, 1984.

\bibitem{MR1623965}
Takahiro Hayashi.
\newblock Quantum groups and quantum semigroups.
\newblock {\em J. Algebra}, 204(1):225--254, 1998.

\bibitem{MR2846728}
Noriaki Kamiya and Youichi Shibukawa.
\newblock Dynamical {Y}ang-{B}axter maps associated with homogeneous
  pre-systems.
\newblock {\em J. Gen. Lie Theory Appl.}, 5:Art. ID G110106, 9, 2011.

\bibitem{2016arXiv160708081L}
V.~{Lebed}.
\newblock {Cohomology of idempotent braidings, with applications to
  factorizable monoids}.
\newblock {\tt arXiv:160708081}.

\bibitem{MR3105304}
Victoria Lebed.
\newblock Categorical aspects of virtuality and self-distributivity.
\newblock {\em J. Knot Theory Ramifications}, 22(9):1350045, 32, 2013.

\bibitem{MR3081627}
Victoria Lebed.
\newblock Homologies of algebraic structures via braidings and quantum
  shuffles.
\newblock {\em J. Algebra}, 391:152--192, 2013.

\bibitem{MR1712872}
Saunders Mac~Lane.
\newblock {\em Categories for the working mathematician}, volume~5 of {\em
  Graduate Texts in Mathematics}.
\newblock Springer-Verlag, New York, second edition, 1998.

\bibitem{MR2969244}
Diogo~Kendy Matsumoto.
\newblock Dynamical braces and dynamical {Y}ang-{B}axter maps.
\newblock {\em J. Pure Appl. Algebra}, 217(2):195--206, 2013.

\bibitem{MR3374623}
Diogo~Kendy Matsumoto and Youichi Shibukawa.
\newblock Quantum {Y}ang-{B}axter equation, braided semigroups, and dynamical
  {Y}ang-{B}axter maps.
\newblock {\em Tokyo J. Math.}, 38(1):227--237, 2015.

\bibitem{MR0161667}
J.~B. McGuire.
\newblock Study of exactly soluble one-dimensional {$N$}-body problems.
\newblock {\em J. Mathematical Phys.}, 5:622--636, 1964.

\bibitem{MR3528563}
Wolfgang Rump.
\newblock Dynamical groups and braces.
\newblock {\em J. Algebra Appl.}, 15(7):1650135, 31, 2016.

\bibitem{MR2024429}
Peter Schauenburg.
\newblock Weak {H}opf algebras and quantum groupoids.
\newblock In {\em Noncommutative geometry and quantum groups ({W}arsaw, 2001)},
  volume~61 of {\em Banach Center Publ.}, pages 171--188. Polish Acad. Sci.
  Inst. Math., Warsaw, 2003.

\bibitem{MR2742743}
Y.~Shibukawa.
\newblock Survey on dynamical {Y}ang-{B}axter maps.
\newblock In {\em Noncommutative structures in mathematics and physics}, pages
  239--244. K. Vlaam. Acad. Belgie Wet. Kunsten (KVAB), Brussels, 2010.

\bibitem{MR2181454}
Youichi Shibukawa.
\newblock Dynamical {Y}ang-{B}axter maps.
\newblock {\em Int. Math. Res. Not.}, (36):2199--2221, 2005.

\bibitem{MR2389797}
Youichi Shibukawa.
\newblock Dynamical {Y}ang-{B}axter maps with an invariance condition.
\newblock {\em Publ. Res. Inst. Math. Sci.}, 43(4):1157--1182, 2007.

\bibitem{MR3448180}
Youichi Shibukawa.
\newblock Hopf algebroids and rigid tensor categories associated with dynamical
  {Y}ang-{B}axter maps.
\newblock {\em J. Algebra}, 449:408--445, 2016.

\bibitem{MR2588133}
Youichi Shibukawa and Mitsuhiro Takeuchi.
\newblock F{RT} construction for dynamical {Y}ang-{B}axter maps.
\newblock {\em J. Algebra}, 323(6):1698--1728, 2010.

\bibitem{MR0364332}
Moss~E. Sweedler.
\newblock Groups of simple algebras.
\newblock {\em Inst. Hautes \'Etudes Sci. Publ. Math.}, (44):79--189, 1974.

\bibitem{MR0506407}
Mitsuhiro Takeuchi.
\newblock Groups of algebras over {$A\otimes \overline A$}.
\newblock {\em J. Math. Soc. Japan}, 29(3):459--492, 1977.

\bibitem{MR0261870}
C.~N. Yang.
\newblock Some exact results for the many-body problem in one dimension with
  repulsive delta-function interaction.
\newblock {\em Phys. Rev. Lett.}, 19:1312--1315, 1967.

\end{thebibliography}

\end{document}